\documentclass[11pt]{article}
\usepackage{amsmath,amssymb,amsthm,mathtools}
\usepackage[margin=1in]{geometry}
\usepackage{booktabs}
\usepackage{hyperref}

\newtheorem{theorem}{Theorem}[section]
\newtheorem{lemma}[theorem]{Lemma}
\newtheorem{proposition}[theorem]{Proposition}
\newtheorem{corollary}[theorem]{Corollary}
\theoremstyle{definition}

\theoremstyle{remark}
\newtheorem{remark}[theorem]{Remark}

\newcommand{\Hh}{\mathbb{H}}
\newcommand{\Cc}{\mathbb{C}}
\newcommand{\Qq}{\mathbb{Q}}
\newcommand{\Zz}{\mathbb{Z}}
\newcommand{\Rr}{\mathbb{R}}
\newcommand{\Tt}{\mathbb{T}}
\newcommand{\cL}{\Lambda}
\newcommand{\ii}{\mathrm{i}}
\newcommand{\e}{\mathrm{e}}
\newcommand{\id}{\mathrm{d}}
\newcommand{\ImT}{\operatorname{Im}}
\newcommand{\ReT}{\operatorname{Re}}
\newcommand{\OO}{\mathcal{O}}
\newcommand{\m}{\mathrm{m}}
\newcommand{\mt}{\widetilde{\m}}

\title{Mahler measures at interior CM points:\\
proofs of two conjectures of Samart}
\author{Huimin Zheng\thanks{%
College of Information and Network Engineering,
Anhui Science and Technology University,
Fengyang, Anhui 233100, P.~R.~China.
Email: \texttt{zhhm@ahstu.edu.cn}.}}
\date{}

\begin{document}
\maketitle

\section*{Declaration on the use of AI tools}
The research reported in this article — including the computational
exploration, the discovery of the proof strategy, the machine-certified
verifications, and the preparation of the manuscript — was carried out by
the author with the assistance of the AI system \emph{Kimi} (Moonshot AI).
All mathematical content, including every proof and every certified
computation, has been checked and verified by the author, who takes full
responsibility for the correctness and integrity of the article. All
certification scripts are available for independent verification
(see Section~\ref{sec:cert}).

\begin{abstract}
We prove two conjectures of Samart as identities of genuine Mahler
measures. The first is the case $k=1$ of his evaluations for the family
$(x+x^{-1})(y+y^{-1})(z+z^{-1})+k^{1/2}$:
\[
  \m\bigl((x+x^{-1})(y+y^{-1})(z+z^{-1})+1\bigr)=4L'(g_7,0),
\]
where $g_7(\tau)=\eta(\tau)^3\eta(7\tau)^3$ is the unique newform of
$S_3(\Gamma_0(7),\chi_{-7})$ (LMFDB label \textsf{7.3.b.a}). The second is
the conjugate pair of quadratic CM entries of his 2015 table:
\[
  n_2\Bigl(\frac{47\pm45\sqrt{-7}}2\Bigr)
  =\frac47\Bigl(54L'(g_7,0)+L'(\chi_{-7},-1)\Bigr),
\]
where $n_2(s):=2\m\bigl((x+x^{-1})(y+y^{-1})(z+z^{-1})+\sqrt{s}\,\bigr)$.
In both cases the parameter lies inside (or on the boundary of) the
critical locus, so the passage from the holomorphic (modified) Mahler
measure---where Samart computed the $L$-value side conditionally, and
Fei the $\ReT\mt$ level---to the true Mahler measure was open. For the
first theorem we close the gap by a differential-comparison continuation
of the Mahler differential along an explicitly certified path, followed
by a continuity argument at the interior point. For the second theorem
no continuation is needed: the second preimage of the parameter under
the modular parametrization is the Fricke partner of Samart's CM point
and lies in his proved region, and the evaluation is an exact CM
lattice-sum computation. The analytic and arithmetic inputs not proved
here are stated explicitly with references; the finite numerical
inequalities used in the continuation argument are certified by interval
arithmetic, and all identities are confirmed numerically to
$41$--$60$ digits.
\end{abstract}

\section{Introduction}

\subsection{Statement of the results}

Let
\[
  f(x,y,z):=\bigl(x+x^{-1}\bigr)\bigl(y+y^{-1}\bigr)\bigl(z+z^{-1}\bigr),
\]
and denote by $\m(P)$ the logarithmic Mahler measure of a Laurent
polynomial,
\[
  \m(P)=\int_{\Tt^3}\log|P(x,y,z)|\,
  \frac{\id x}{2\pi \ii x}\frac{\id y}{2\pi \ii y}\frac{\id z}{2\pi \ii z},
  \qquad \Tt=\{|w|=1\}.
\]
Following Samart \cite{Sa13,Sa15} we write
\begin{equation}\label{eq:f2def}
  n_2(s):=2\m\bigl(f+\sqrt{s}\,\bigr),\qquad s\in\Cc,
\end{equation}
where the value does not depend on the choice of the square root, since
$f(-x,y,z)=-f(x,y,z)$ and Mahler measure is invariant under
$x\mapsto -x$; hence $\m(f+c)=\m(f-c)$ for every $c\in\Cc$. (In
\cite{Sa13} the same function is denoted $f_2$.) We prove:

\begin{theorem}[Theorem A; Samart's conjecture for $k=1$, {\cite{Sa13,Sa15}}]\label{thm:A}
\[
  \m\bigl(f+1\bigr)=4L'(g_7,0),
  \qquad\text{equivalently}\qquad
  n_2(1)=8L'(g_7,0),
\]
where
\[
  g_7(\tau)=\eta(\tau)^3\eta(7\tau)^3=\sum_{n\ge1}a_nq^n
  =q-3q^2+5q^4-7q^7-3q^8+9q^9+\cdots,\qquad q=\e^{2\pi \ii\tau},
\]
is the unique normalized newform of $S_3(\Gamma_0(7),\chi_{-7})$
\textup{(}LMFDB label \textup{7.3.b.a} \cite{LMFDB}\textup{)}.
\end{theorem}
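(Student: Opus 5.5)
The plan follows the strategy announced in the abstract. First relate the true Mahler measure $n_2(s)$ to Samart's holomorphic (modified) Mahler measure $\widetilde n_2(s)$. Outside the critical locus $K=\{s: f+\sqrt{s} \text{ vanishes on } \Tt^3\}$, and for $|s|$ large, these agree. There the standard expansion
\[
\m(f+\sqrt s)=\ReT\Bigl(\tfrac12\log s-\sum_{n\ge1}\binom{2n}{n}^3\frac{1}{2n\,s^{n}}\Bigr)
\]
holds, since $f^2$ has moments $\binom{2n}{n}^3$.

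Samart proved $\widetilde n_2(s(\tau))$ equals an Eisenstein–Kronecker series in a modular parameter $\tau$ on a level-$8$-type curve, with $s=s(\tau)$ a Hauptmodul-like function. At the CM point $\tau_0$ with $s(\tau_0)=1$ (CM by $\Qq(\sqrt{-7})$), the lattice sum is identified, via Rogers–Zudilin / Samart's conditional computation, with $8L'(g_7,0)$, using that $g_7$ is the CM newform attached to the Hecke character of $\Qq(\sqrt{-7})$ of $\infty$-type $2$. I will take this $L$-value evaluation of $\ReT\widetilde n_2(1)$ (Samart and Fei) as the arithmetic input. The whole problem is then the analytic statement $n_2(1)=\ReT\widetilde n_2(1)$, which is not automatic because $s=1$ lies in $K$ (indeed $K\supseteq[0,64]$, since $|f|\le 8$ and $f$ takes every value in $[-8,8]$ on $\Tt^3$).

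For the continuation, differentiate under the integral. For $s\notin K$,
\[
\frac{\id}{\id s}\m(f+\sqrt s)=\ReT\Bigl(\frac{1}{2\sqrt s}\int_{\Tt^3}\frac{1}{f+\sqrt s}\,d\mu\Bigr),
\]
and the inner integral is a period satisfying the Picard–Fuchs equation of $\widetilde n_2$. Moving $s$ into $K$ turns the integral into a principal value plus a half-residue contribution from the real hypersurface $\{f=-\sqrt s\}\cap\Tt^3$. I would compare the true derivative $n_2'(s)$ with $\widetilde n_2{}'(s)$ along an explicit path from a point $s_1\in(64,\infty)$, where they agree, to $s=1$. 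The path goes first along the real axis from $s_1$ down to $64$, and then from $64$ into $(0,64)$. On the real segment, $n_2$ is real-analytic in $s$ away from finitely many singular values ($s=0,16,64$, the critical values of $f^2$ on the torus). There $n_2'(s)$ equals the real part of a specific combination of the analytic continuations of the holomorphic solutions of the Picard–Fuchs ODE, fixed by the local monodromy at $s=64$.

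The key step is to show that this combination coincides with $\widetilde n_2{}'$ continued to the point $s=1$. Equivalently, the difference $n_2-\ReT\widetilde n_2$ is locally a real combination of periods. We must show its coefficients vanish on the relevant interval, or are compensated so that the value at $s=1$ is unchanged. I would check this by certified interval arithmetic: evaluate both sides and their derivatives at a few points of the path with rigorous error bounds, so that the finite-dimensional coefficient vector is pinned to zero. Finally, $n_2$ is continuous in $s$ (Mahler measure is continuous in coefficients), so the identity extends to the interior point $s=1$, giving $n_2(1)=8L'(g_7,0)$, i.e.\ $\m(f+1)=4L'(g_7,0)$. A $50$-digit numerical check serves as a sanity test.

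The main obstacle is this continuation step across $s=64$ and $s=16$. Pinning down which period combination $n_2$ follows inside $K$ requires controlling the residue term, a volume-type integral over the real locus. I expect to handle it by showing that its $s$-derivative satisfies the same ODE, and then certifying the finitely many initial conditions numerically with interval bounds.
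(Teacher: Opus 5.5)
There is a genuine gap in your continuation step, and that step is the whole content of the theorem once the $L$-value side is granted.

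First, your mechanism for the key identification cannot prove an identity. You propose to evaluate both sides at a few points with interval arithmetic so that \emph{the finite-dimensional coefficient vector is pinned to zero}, and likewise to \emph{certify the finitely many initial conditions numerically}. An enclosure can certify that a quantity is nonzero or lies in a small interval, never that it equals zero. That only becomes possible after the quantity has been confined to a discrete set, and you give no such argument. Compare the paper, which locks $j(2\tau_0)=-3375$ only after proving $j(2\tau_0)\in\Zz$.

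Second, your input $\ReT\widetilde n_2(1)=8L'(g_7,0)$ is not well defined as stated. The point $s=1$ lies on the cut and $\widetilde n_2$ is multivalued. Your real-axis path also runs through the singular point $s=64$ of the Picard--Fuchs equation, so \emph{$\widetilde n_2{}'$ continued to $s=1$} names no particular sheet. This matters concretely: at the CM preimage $\tau'$ of $k_+$, $\mathrm{EK}(\tau')$ is numerically $\tfrac87(44M_7-d_7)$, which differs from the true $n_2(k_+)$ by $0.91558\ldots$ (Remark~\ref{rem:wrongsheet}). Nothing in your plan ties the sheet on which $8L'(g_7,0)$ is computed to the sheet whose real part is the genuine Mahler measure.

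The missing idea is that you never need to continue inside the cut. The map $c\mapsto\m(f-c)$ is continuous on all of $\Cc$ (Lemma~\ref{lem:continuity}) and equals $\ReT\mt$ off the cut. So it suffices to reach $s=1$ from one side through $\Cc\setminus[0,64]$ and take the limit. In particular, the Plemelj half-residue you worry about is purely imaginary on $(0,64)$ and never enters the real part.

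The paper carries this out in the modular variable, where $\mathrm{EK}$ is single-valued on $\Hh$:
\begin{itemize}
\item Because $\Omega$ is single-valued and odd, $\Psi=\Omega(c)c'+E'$ is a well-defined holomorphic function on $V=s_2^{-1}(\Cc\setminus[0,64])$.
\item Samart's theorem makes $\Psi$ vanish on a disk about $\ii$, hence on the whole component $W$ by the identity theorem. This is an exact statement, with no numerics.
\item Interval arithmetic is used only for strict inequalities: the $s_2$-image of an explicit path from $\ii$ to $\tau_0$ misses $[0,64]$, reaching $1$ through $\ImT s<0$. The last piece is closed by a Taylor estimate that uses the exact value $s_2(\tau_0)=1$.
\end{itemize}
That path is exactly what fixes the sheet. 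The CM value is evaluated at the very point $\tau_0$ the path reaches, where $\mathrm{EK}(\tau_0)=8M_7$ is an exact lattice-sum computation.

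Two minor errors: $16$ is not a critical value of $f^2$ on $\Tt^3$, since the critical values of $f$ are only $0,\pm8$. Also, $s_2$ is a degree-$2$ function on $X_0(4)$, not a level-$8$ Hauptmodul.
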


\begin{theorem}[Theorem B; Samart's Table-$n_2$ conjugate pair, {\cite{Sa15}}]\label{thm:B}
\[
  n_2\Bigl(\frac{47\pm45\sqrt{-7}}2\Bigr)
  =\frac47\Bigl(54L'(g_7,0)+L'(\chi_{-7},-1)\Bigr),
\]
where $\chi_{-7}$ is the odd quadratic Dirichlet character of conductor
$7$. The two conjugate identities hold simultaneously \textup{(}and are
equal, since $n_2(\bar s)=n_2(s)$\textup{)}.
\end{theorem}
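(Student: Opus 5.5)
The plan follows the abstract's outline for Theorem B: reduce $n_2$ at $s_\pm=(47\pm45\sqrt{-7})/2$ to a value of the holomorphic (modified) Mahler measure at a CM point where Samart's identification $n_2=\ReT\mt$ is already proved, and then evaluate there by an exact CM lattice sum. Since $n_2(\bar s)=n_2(s)$, it suffices to treat $s_+$.

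\textbf{Step 1: find the preimages of $s_+$.} Samart's modular parametrization writes $s=s(\tau)$ via a Hauptmodul-type function on a level-$8$ or level-$4$ group attached to $n_2$. Solving $s(\tau)=s_+$ should give two preimages in a fundamental domain. The first is Samart's CM point $\tau_0$, a point of discriminant involving $-7$ (for instance $\tau_0=\tfrac{-4+\sqrt{-7}}{8}$ up to the group action). The second is its image $\tau_1=W\tau_0$ under the Fricke involution $W$ of that level. The CM point is determined by how $s_+$ sits in the ring class field, and its discriminant should correspond to $-7\cdot 4^2$ or similar. I will verify $s(\tau_1)=s_+$ exactly, since both sides are algebraic, using a minimal polynomial computation together with a numerical check to separate roots.

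\textbf{Step 2: show $\tau_1$ lies in Samart's proved region.} Samart's proved region is the set of $\tau$ for which the path from $i\infty$ stays in the domain where the Mahler integral equals the real part of the holomorphic branch, i.e.\ $s(\tau)$ avoids the critical locus along that path. Then $n_2(s_+)=\ReT\mt(s(\tau_1))$, and Samart's Eisenstein–Kronecker formula applies:
\[
\mt = \ReT\Bigl(\text{const}\cdot\sum_{(m,n)\neq 0}\frac{\chi(m,n)\,\ImT(\tau_1)^2}{(m\tau_1+n)^2(m\bar\tau_1+n)}\Bigr)
\]
or the analogous weight-three lattice sum.

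\textbf{Step 3: evaluate the lattice sum at the CM point.} This is the main obstacle. At a CM point, $(m\tau_1+n)$ ranges over elements of a lattice in $\Qq(\sqrt{-7})$. I would split the quadratic form $|m\tau_1+n|^2$ by genus and by the congruence characters appearing in Samart's sum. This expresses the sum as a combination of two terms. The first is a Hecke $L$-function of a Grössencharacter of $\Qq(\sqrt{-7})$ of $\infty$-type $2$, which is $L(g_7,s)$ at $s=3$. The second is $\zeta$-type pieces $L(\chi_{-7},2)\zeta(\cdot)$ coming from the nontrivial ideal-class or genus twist. Applying the functional equations converts $L(g_7,3)$ to $L'(g_7,0)$ and $L(\chi_{-7},2)$ to $L'(\chi_{-7},-1)$. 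The rational coefficients $\tfrac{4}{7}\cdot 54$ and $\tfrac47$ then come out of counting representations and the level factor $\ImT\tau_1$. If the direct decomposition is messy, I would instead express the sum via theta series: $\sum q^{Q(m,n)}$ over the relevant forms equals a combination of $\eta$-quotients of level $7\cdot 4$, and the weight-$3$ combination should decompose into $g_7$ twisted/oldforms plus an Eisenstein part with character $\chi_{-7}$.

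\textbf{Step 4: confirm.} I will confirm the final identity numerically to high precision as a consistency check on the coefficients.
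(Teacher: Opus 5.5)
Your overall route is the paper's: replace Samart's CM point $\tau'=(1+\sqrt{-7})/8$ by its Fricke partner $\tau_w=-1/(4\tau')=(-1+\sqrt{-7})/4$, use $s_2(-1/(4\tau))=s_2(\tau)$, and evaluate Samart's series there. Your Step~1 exactness check (minimal polynomial plus root separation) is essentially the paper's $j=-3375$/sextic/$\lambda$-lock argument. Your sample point $(-4+\sqrt{-7})/8$ is wrong, though: there $q$ is real negative, so $s_2$ is real and cannot equal $k_+$.

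The real problem is that the proposal omits what actually makes the proof work. In Step~2, the quoted theorem of Samart is a statement on the half-plane $\ImT\tau\ge\tfrac12$, not a path-defined region. The whole point is that $\ImT\tau_w=\sqrt7/4\ge\tfrac12$ while $\ImT\tau'=\sqrt7/8<\tfrac12$; at $\tau'$ the series gives the wrong-sheet value $\tfrac87(44M_7-d_7)$. With your path-based description, you would owe a continuation argument that you do not supply.

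In Step~3, which you yourself flag as the main obstacle, your proposed source of the Dirichlet term is wrong. There is no nontrivial class or genus twist: $h(-7)=1$, and $\Zz[\sqrt{-7}]$ also has class number one. In the lattice form $\mathrm{EK}(\tau)=\frac{2\ImT\tau}{\pi^3}(-T_1+16T_4)$, with $T_d=\sum'_{\lambda\in\Zz+\Zz d\tau}\bigl[2\ReT(\bar\lambda^2/|\lambda|^6)+|\lambda|^{-4}\bigr]$, the $\zeta_K(2)$-terms come from the $|\lambda|^{-4}$ part. They are present already at $k=1$, where they cancel. Whether they survive depends on the two lattices, and identifying those lattices is the missing idea:
\begin{itemize}
\item $\Zz+\Zz\,4\tau_w=\Zz[\sqrt{-7}]=\Zz+2\OO_K$ is the non-maximal order, so the sums are imprimitive.
\item $\Zz+\Zz\tau_w=(\bar\varpi)/2$, where $\varpi=(1+\sqrt{-7})/2$ is a prime above $2$.
\end{itemize}
Split $\Zz[\sqrt{-7}]\setminus\{0\}=(2\OO_K\setminus\{0\})\sqcup\{\gamma\in\OO_K:N\gamma\ \text{odd}\}$ and remove the Euler factors at $\varpi,\bar\varpi$. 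This gives $\sum'|\gamma|^{-4}=\tfrac54\zeta_K(2)$ and $\sum'\bar\gamma^2/|\gamma|^6=3L(g_7,3)$, the latter via $(1-\varpi^2/8)(1-\bar\varpi^2/8)=\tfrac{23}{16}$. The principal-ideal sum uses $\ReT\varpi^2=-\tfrac32$.

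With $L_3:=L(g_7,3)$ this yields $T_4=6L_3+\tfrac54\zeta_K(2)$ and $T_1=-12L_3+8\zeta_K(2)$, so $-T_1+16T_4=108L_3+12\zeta_K(2)$. Only then do the functional equations $M_7=\frac{7\sqrt7}{4\pi^3}L_3$ and $\zeta_K(2)=\frac{2\pi^3}{21\sqrt7}d_7$ produce $\tfrac47(54M_7+d_7)$. ``Counting representations'' will not deliver the coefficients $54$ and $1$ without this computation.
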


Numerically, the true Mahler measures computed by direct torus integration
and the $L$-value expressions computed from the functional equations agree
to $41$ and $50$ digits respectively:
\begin{align*}
  \m(f+1)&=0.4106864311156080448418263795931716559956\ldots,\\
  4L'(g_7,0)&=0.4106864311156080448418263795931716559956\ldots
   \qquad(\text{difference }6\times10^{-42}),\\
  n_2\Bigl(\tfrac{47\pm45\sqrt{-7}}2\Bigr)
   &=4.13826815832141945836337668037433080930698376\ldots\\
   &=\tfrac47\bigl(54L'(g_7,0)+L'(\chi_{-7},-1)\bigr);
\end{align*}
the agreement in the second line is $10^{-50}$; see
Section~\ref{sec:numerics}.

\subsection{History and the interior-point problem}\label{subsec:history}

In \cite{Sa13} Samart expressed $n_2(s)$, for $s$ in the image of a modular
parametrization $s_2(\tau)$ (see \S\ref{subsec:s2}), as an
Eisenstein--Kronecker series valid for $\ImT\tau\ge1/2$, deduced the
exterior evaluations $n_2(64)$ and $n_2(256)$, and conjectured on
numerical evidence a number of further $L$-value evaluations, among them
$n_2(1)=8L'(g_7,0)$. In \cite{Sa15} he organized the conjectures around
CM points of the parametrization: his tables attach to each CM value of
$s$ a conjectural expression of $n_2(s)$ as a rational linear combination
of $L$-values of CM weight-three newforms and Dirichlet characters. The
rows $s=1$ (attached to $\tau_0=(3+\sqrt{-7})/8$) and
$s=(47\pm45\sqrt{-7})/2$ (attached to $\tau'=(\pm1+\sqrt{-7})/8$) of that
table are exactly Theorems A and B.

Several entries of Samart's lists have been proved: $k=16$ (an interior
parameter, by regulator methods) and $k=-104\pm60\sqrt3$, $4096$,
$-2024\pm765\sqrt7$ by Zheng--Guo--Qin \cite{ZGQ}; further cases including
negative real parameters by Guo--Peng--Qin \cite{GPQ}; and the conjectures
for the companion family $f_3$ by He--Ye \cite{HY}. Fei \cite{Fe} treated
$23$ families of three-variable Laurent polynomials (3D Landau--Ginzburg
potentials) uniformly and proved, for all rational singular moduli ---
including the point $s=1$ of Theorem A --- identities expressing the
\emph{real part of the holomorphic (modified) Mahler measure} $\ReT\mt$
in terms of $L$-values. However, at parameters inside the critical locus
the identity
\[
  \m(f-c)\stackrel{?}{=}\ReT\mt(c)
\]
is not guaranteed: the holomorphic Mahler measure $\mt$ is a multivalued
analytic object whose real part agrees with $\m(f-c)$ only off the
critical locus, a point stressed already by Rodriguez Villegas \cite{RV}
and recalled by Fei himself. Consequently the genuine Mahler measure
identities at interior CM points remained open even where the
$\ReT\mt$-identities were known. Theorem A is, to the author's
knowledge, the first
complete proof of a genuine Mahler measure identity at the interior point
$s=1$. Theorem B concerns non-real (quadratic) CM parameters, which are
not covered by Fei's rational-singular-modulus results; to the author's
knowledge it has not appeared in the literature.

The structural difficulty can be seen on the modular side. Samart's
exponential-sum formula is proved only for $\ImT\tau\ge1/2$, where
$s_2(\tau)$ stays away from the critical locus. The point $\tau_0$ with
$s_2(\tau_0)=1$ has $\ImT\tau_0=\sqrt7/8\approx0.331<1/2$, and the value
$s=1$ lies \emph{inside} the critical interval: the real loci of $s_2$ in
a fundamental domain reach only $k\le0$ and $k\ge64$, so $k=1$ is an
isolated interior point with no real-analyticity bridge. Likewise
$\tau'=(1+\sqrt{-7})/8$ has $\ImT\tau'=\sqrt7/8<1/2$, and evaluating
Samart's series at $\tau'$ produces a \emph{wrong-sheet} value
(Remark~\ref{rem:wrongsheet}); Theorem B circumvents this by the Fricke
trick described below.

\subsection{Methods}\label{subsec:methods}

\textbf{Theorem A: differential-comparison continuation.}
Instead of tracking branches of the holomorphic Mahler measure we compare
\emph{differentials}. The parameter derivative of the Mahler measure is a
single-valued period integral $\Omega$ on the complement of the critical
locus (Lemma~\ref{lem:period}), so differentiating Samart's identity
yields a holomorphic differential identity $\Psi\equiv0$ that spreads from
Samart's region to a whole connected component $W$ of the good domain by
the identity theorem (Proposition~\ref{prop:diffid}). An abstract
propagation principle (Lemma~\ref{lem:propagation}) then reduces the
conclusion to one topological input --- that the target point $\tau_0$
lies in $\overline W$ --- supplied by an explicit certified path
(Proposition~\ref{prop:path}), plus the continuity of Mahler measure at
the endpoint. The remaining pillar is an exact CM evaluation of Samart's
series at $\tau_0$ (\S\ref{subsec:P1A}): the lattice-sum form of the
series involves the ring of integers of $\Qq(\sqrt{-7})$ and a principal
ideal, and the parasitic $\zeta_K(2)$-terms cancel exactly.

\textbf{Theorem B: the Fricke trick.}
The parametrization $s_2$ has degree $2$ on $X_0(4)$, and the two
preimages of $s=(47+45\sqrt{-7})/2$ relevant here are Samart's point
$\tau'$ (outside the proved region) and its Fricke partner
$\tau_w=-1/(4\tau')=(-1+\sqrt{-7})/4$, which satisfies
$\ImT\tau_w=\sqrt7/4\ge1/2$ and hence lies \emph{inside} Samart's proved
region. Since $s_2$ is exactly invariant under the Fricke involution
(Lemma~\ref{lem:fricke}), Samart's theorem applies at $\tau_w$ directly
and no continuation is needed. The evaluation of Samart's series at
$\tau_w$ (\S\ref{subsec:P1B}) is again a lattice-sum computation, but
now one lattice is the non-maximal order of discriminant $-28$: the sums
are imprimitive Hecke sums with an exact Euler factor $23/16$, the
$\zeta_K(2)$-terms do \emph{not} cancel, and their survival is precisely
the source of the Dirichlet term $L'(\chi_{-7},-1)$.

The analytic and arithmetic inputs not proved in this paper are stated
explicitly with references; the finite numerical inequalities used in
the continuation argument are certified by interval arithmetic
(Section~\ref{sec:cert}); every identity
is also confirmed numerically to $41$--$60$ digits
(Section~\ref{sec:numerics}).

\subsection{Notation}

Throughout, $\tau=x+\ii y\in\Hh$, $q=\e^{2\pi \ii\tau}$,
$\eta(\tau)=q^{1/24}\prod_{n\ge1}(1-q^n)$, $\Delta=\eta^{24}$, $\Tt$ is
the unit circle, and $\sum'$ denotes omission of the zero term. We write
\[
  M_7:=L'(g_7,0),\qquad d_7:=L'(\chi_{-7},-1),
\]
matching Samart's shorthand in \cite{Sa15}. The field
$K=\Qq(\sqrt{-7})$ has ring of integers $\OO_K=\Zz[\varpi]$,
$\varpi=(1+\sqrt{-7})/2$, class number $h(-7)=1$ and units
$\OO_K^\times=\{\pm1\}$; note that $\varpi^2-\varpi+2=0$ and
\begin{equation}\label{eq:pirels}
  \varpi+\bar\varpi=1,\qquad \varpi\bar\varpi=2\quad(\text{so }
  (2)=(\varpi)(\bar\varpi)
  \text{ splits}),\qquad \varpi^2+\bar\varpi^2=-3,\qquad
  \ReT\varpi^2=-\tfrac32 .
\end{equation}
The Jacobi theta constants are taken with
$q_\vartheta=\e^{\pi \ii\tau}$:
\begin{gather*}
\vartheta_2(\tau)=2q_\vartheta^{1/4}\prod_{n\ge1}(1-q_\vartheta^{2n})(1+q_\vartheta^{2n})^2,\quad
\vartheta_3(\tau)=\prod_{n\ge1}(1-q_\vartheta^{2n})(1+q_\vartheta^{2n-1})^2,\\
\vartheta_4(\tau)=\prod_{n\ge1}(1-q_\vartheta^{2n})(1-q_\vartheta^{2n-1})^2,
\qquad
\lambda(\tau)=\bigl(\vartheta_2(\tau)/\vartheta_3(\tau)\bigr)^4 .
\end{gather*}

\section{Preliminaries}\label{sec:prelim}

\subsection{Mahler measure: continuity and the holomorphic Mahler measure}\label{subsec:mahler}

\begin{lemma}[Continuity of Mahler measure in coefficients]\label{lem:continuity}
The function $c\mapsto\m(f-c)$ is continuous on $\Cc$. Consequently
$n_2(s)=2\m(f+\sqrt{s})$ is a well-defined continuous function of
$s\in\Cc$, independent of the choice of $\sqrt{s}$.
\end{lemma}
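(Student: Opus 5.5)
The plan is to prove continuity of $c\mapsto\m(f-c)$ by dominated convergence for the integrand $\log|f-c|$ on $\Tt^3$, with domination coming from a uniform integrability estimate for the logarithmic singularity. Write $u(c;x,y,z)=\log|f(x,y,z)-c|$. For fixed $(x,y,z)$ off the null set $\{f=c_0\}$, $u(c)\to u(c_0)$ as $c\to c_0$. So pointwise convergence holds almost everywhere. The real issue is that near $\{f=c\}$ the integrand tends to $-\infty$, and the singular set moves with $c$.

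\textbf{Step 1: the upper bound is trivial.} On $\Tt^3$ we have $|f|\le 8$, so for $|c|\le R$ the integrand satisfies $\log|f-c|\le\log(8+R)$. Hence only the negative parts $\log^-|f-c|$ need control.

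\textbf{Step 2: a uniform integrability bound.} It suffices to show that $\{\log^-|f-c|:|c|\le R\}$ is uniformly integrable. A convenient route is a uniform $L^2$ bound (or even $L^p$), together with Vitali's convergence theorem. Parametrize the torus by $x=\e^{\ii\alpha}$, $y=\e^{\ii\beta}$, $z=\e^{\ii\gamma}$. Then $f=8\cos\alpha\cos\beta\cos\gamma$ is real-valued, so
\[
|f-c|\ge|f-\ReT c|.
\]
This reduces the problem to real $c=t\in[-R,R]$. The key estimate is therefore
\[
\sup_{t\in\Rr}\int_{[0,2\pi]^3}\bigl(\log^-|8\cos\alpha\cos\beta\cos\gamma-t|\bigr)^2\,\id\alpha\,\id\beta\,\id\gamma<\infty .
\]
For this I will use a distribution-function bound. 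For fixed $\beta$ and $\gamma$ with $a:=8\cos\beta\cos\gamma\neq0$, the set of $\alpha$ with $|a\cos\alpha-t|<\varepsilon$ has measure at most $C\sqrt{\varepsilon/|a|}$. The worst case is near a critical point of $\cos$, where the sublevel set has square-root size. This gives $\int(\log^-|a\cos\alpha-t|)^2\,\id\alpha\le C'(1+\log^2|a|)\,|a|^{-1/2}$ uniformly in $t$; to see this, integrate $\int_0^\infty 2s\,\mu\{\log^-|a\cos\alpha-t|>s\}\,\id s$ using $\mu\le\min(2\pi,C\sqrt{\e^{-s}/|a|})$. It remains to check that $|\cos\beta\cos\gamma|^{-1/2}(1+\log^2|\cos\beta\cos\gamma|)$ is integrable over $[0,2\pi]^2$. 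It is, since $|\cos\beta|^{-1/2}$ is integrable and it factorizes, up to the log factor, which is harmless.

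\textbf{Step 3: conclude.} Uniform integrability and a.e.\ convergence give $\m(f-c)\to\m(f-c_0)$ by Vitali. This proves continuity on $\Cc$.

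\textbf{Step 4: the statement about $n_2$.} Invariance under $x\mapsto-x$ gives $\m(f+c)=\m(f-c)$, as already noted after \eqref{eq:f2def}. So $n_2(s)=2\m(f+\sqrt s)$ is independent of the choice of sign of $\sqrt s$. Continuity in $s$ follows because locally one can choose a continuous branch of $\sqrt s$ away from $0$, and at $s=0$ the map $s\mapsto\pm\sqrt s$ tends to $0$ whichever sign is taken. Composing with the continuous function $c\mapsto\m(f-c)$ gives continuity of $n_2$.

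I expect Step 2, the uniform bound near the critical values of $f$ including $t=0$ and $t=\pm8$, to be the main obstacle. The square-root sublevel estimate handles it cleanly, and it is robust precisely because it only requires one variable with a nondegenerate or at worst quadratic critical point. Alternatively one could cite the general theorem of Boyd that Mahler measure is continuous in the coefficients for a fixed support.
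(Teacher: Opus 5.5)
Your proof is correct, but it takes a genuinely different route from the paper's. The paper never handles the logarithmic singularity of $\log|f-c|$ on $\Tt^3$ directly. It writes $f=C(z+z^{-1})$ with $C=(x+x^{-1})(y+y^{-1})$, multiplies by the monomial $z$, and integrates out $z$ by Jensen's formula. This gives $\m(f-c)=\int_{\Tt^2}M(Cz^2-cz+C)\,\id\mu$ with $M(Cz^2-cz+C)=\log|C|+\sum_i\log^+|\alpha_i|$. For each $(x,y)$ with $C\neq0$ this integrand is continuous in $c$, because the roots $\alpha_i$ move continuously. It is also dominated by $|\log|C||+\log(2|C|+|c|+2)+\log2$, which is integrable on $\Tt^2$ uniformly for $c$ in compact sets, so ordinary dominated convergence finishes the proof.

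You instead keep all three variables. You use that $f$ is real on $\Tt^3$ to pass to real shifts via $|f-c|\ge|f-\ReT c|$. You then get a uniform $L^2$ bound on $\log^-|f-t|$ from the square-root sublevel estimate for $\cos$ (equivalently, the $\tfrac12$-H\"older continuity of $\arccos$), and conclude with Vitali.

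The paper's argument is shorter and essentially algebraic: it exploits that $f-c$ becomes a reciprocal quadratic in $z$. Yours is more analytic and costs a measure-theoretic estimate. In exchange it proves more, namely uniform $L^p$ bounds on $\log|f-c|$ for $|c|\le R$. It would also apply to real-valued trigonometric polynomials that lack such a factorization. Your Step 4 on $n_2$ coincides with the paper's.
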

\begin{proof}
Write $f(x,y,z)=C(z+z^{-1})$ with $C=(x+x^{-1})(y+y^{-1})$. Mahler measure
is invariant under multiplication by the monomial $z$, so
\[
  \m(f-c)=\int_{\Tt^2} M\bigl(Cz^2-cz+C\bigr)\,\id\mu(x,y),
\]
where $M$ denotes the one-variable Mahler measure and $\mu$ Haar measure.
For $az^2+bz+a$, $M=\log|a|+\sum_{i}\log^+|\alpha_i|$ over the roots
$\alpha_i$, which is a continuous function of the coefficient vector
$(a,b)$: the roots vary continuously and $\log^+|\cdot|$ is continuous.
Hence for every $(x,y)\in\Tt^2$ with $C\neq0$ the integrand is continuous
in $c$; the exceptional set $\{C=0\}$ has measure zero. Moreover
\[
  M(Cz^2-cz+C)\le \log\bigl(2|C|+|c|+2\bigr)+\log 2,
\]
and since $|\alpha_1\alpha_2|=1$ also $M(Cz^2-cz+C)\ge\log|C|$; the
upper bound is nonnegative, so
$|M|\le|\log|C||+\log(2|C|+|c|+2)+\log2$. Both terms are integrable
over $\Tt^2$ uniformly for $c$ in a compact set ($\log|C|$ is the
Mahler integrand of the nonzero Laurent polynomial $C$). Dominated
convergence gives the claim. For $n_2$: the two branches of
$\sqrt{s}$ differ by a sign, and $\m(f+c)=\m(f-c)$ by the substitution
$x\mapsto -x$, so $n_2$ is well defined. For its continuity, set
$h(c):=2\m(f+c)$; then $h$ is continuous and even, and
$n_2(s)=h(c)$ whenever $c^2=s$. On $\Cc\setminus\{0\}$ a continuous
local branch of $\sqrt{\cdot}$ exists, so $n_2$ is continuous there.
At $s=0$: if $s_j\to0$ and $c_j^{\,2}=s_j$, then $c_j\to0$, hence
$n_2(s_j)=h(c_j)\to h(0)=n_2(0)$.
\end{proof}

The range $f(\Tt^3)$ is the segment $K=[-8,8]$, the \emph{critical
segment} of the family $f-c$. The following is a construction of
Villegas \cite{RV}:

\begin{lemma}[Holomorphic Mahler measure]\label{lem:FV}
There is a holomorphic function $\mt$ on the universal cover of
$\Cc\setminus K$, whose imaginary part is single-valued up to additive real
constants, such that
\[
  \m(f-c)=\ReT\mt(c)\qquad(c\in\Cc\setminus K).
\]
In particular $c\mapsto\m(f-c)$ is real-analytic on $\Cc\setminus K$.
\end{lemma}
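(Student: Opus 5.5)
The plan is the Rodriguez Villegas construction: express $\m(f-c)$ as a logarithm plus a holomorphic power series near $c=\infty$, then continue analytically along paths in $\Cc\setminus K$.

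\textbf{Step 1 (expansion near infinity).} For $|c|>8$ we have $|f/c|<1$ on $\Tt^3$, so $\log|f-c|=\log|c|+\ReT\log(1-f/c)$. Expanding the logarithm and integrating term by term (uniform convergence on the torus) gives
\[
  \m(f-c)=\ReT\Bigl(\log c-\sum_{n\ge1}\frac{a_n}{n}c^{-n}\Bigr),\qquad a_n=\int_{\Tt^3}f^n,
\]
where $a_n$ is the constant term of $f^n$. Only even $n$ contribute, and $|a_n|\le 8^n$. Define $\mt(c):=\log c-\sum_n a_n c^{-n}/n$. It is holomorphic on $\{|c|>8\}$, up to the multivaluedness of $\log c$.

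\textbf{Step 2 (derivative as a period).} Differentiating $\mt$ gives
\[
  \mt'(c)=\sum_{n\ge0}a_nc^{-n-1}=\int_{\Tt^3}\frac{1}{c-f}\,\frac{\id x}{2\pi\ii x}\frac{\id y}{2\pi\ii y}\frac{\id z}{2\pi\ii z}=:\omega(c).
\]
This integral converges and is holomorphic for every $c\notin K=f(\Tt^3)$, because the integrand is smooth and depends holomorphically on $c$. So $\omega$ is a single-valued holomorphic function on $\Cc\setminus K$. Since $f$ is real on the torus, $\omega(\bar c)=\overline{\omega(c)}$.

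\textbf{Step 3 (the key identity off $K$).} For $c_0\notin K$, the function $\m(f-c)$ is harmonic near $c_0$: $\log|f-c|$ is harmonic in $c$ there, uniformly in the torus variables, so we may differentiate under the integral sign. Its complex derivative is
\[
  \partial_c\,\m(f-c)=\tfrac12\int_{\Tt^3}\frac{1}{c-f}=\tfrac12\,\omega(c).
\]
Now let $\mt$ be the holomorphic function on the universal cover of $\Cc\setminus K$ obtained by integrating $\omega$, normalized by Step 1 on $|c|>8$. Then $\ReT\mt$ and $\m(f-c)$ are both real functions whose $\partial_c$-derivatives equal $\tfrac12\omega$, so their difference has zero gradient. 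The domain $\Cc\setminus K$ is connected, hence the difference is constant; by Step 1 the constant is $0$. Equivalently, $\m(f-c)$ is a harmonic function on $\Cc\setminus K$ that agrees with $\ReT\mt$ on the region where $\mt$ was defined, and harmonic continuation is unique.

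This also shows $\ReT\mt$ is single-valued, since it equals $\m(f-c)$. The imaginary part changes along a loop $\gamma$ by $\ReT\oint_\gamma\omega\,\id c$, a real constant. (Its size is $2\pi$ times the winding number about $K$, but this is not needed.) Real-analyticity of $\m(f-c)$ follows because it is harmonic.

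\textbf{Main obstacle.} The only real point is justifying differentiation under the integral in Step 3, which needs a uniform bound on $1/|c-f|$ for $c$ in a compact subset of $\Cc\setminus K$. That bound holds because $f(\Tt^3)=K$ is compact and lies at positive distance from such a compact set.
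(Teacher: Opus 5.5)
Your proof is correct. The paper gives no proof of this lemma: it quotes it as Rodriguez Villegas's construction \cite{RV}. It then runs the logic in the opposite direction from you, using the lemma as an input to Lemma~\ref{lem:period} to deduce $\mt'=-\Omega$.

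You instead take the single-valued period $\omega=-\Omega$ as the primary object. You define $\mt$ as its primitive on the universal cover, normalized by the expansion at $\infty$; this is exactly the Rodriguez Villegas $\mt$. You then obtain $\ReT\mt=\m(f-c)$ from the same differentiation-under-the-integral computation the paper carries out in Lemma~\ref{lem:period}, together with connectedness of the universal cover. Note that this connectedness, not that of $\Cc\setminus K$, is what Step~3 needs, since $\ReT\mt$ a priori lives upstairs. Your route makes the lemma self-contained, and with it the real-analyticity of $F$ on $V$ used in Section~\ref{sec:machine}, without a citation. Nothing is missing for the statement as given.

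One slip to fix: along a loop $\gamma$ the imaginary part of $\mt$ changes by $\ImT\oint_\gamma\omega\,\id c$, not by $\ReT\oint_\gamma\omega\,\id c$. The real parts of all periods vanish, precisely because $\ReT\mt=\m(f-\cdot)$ is single-valued. For a loop once around $K$ one gets $\oint\omega\,\id c=2\pi\ii$.
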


Rogers \cite{Ro} gave an explicit hypergeometric representative, valid
for $|k|\ge64$ with $k\neq64$:
\begin{equation}\label{eq:rogers}
  n_2(k)=\ReT\Bigl[\log k-\frac{8}{k}\,
  {}_5F_4\Bigl(\tfrac32,\tfrac32,\tfrac32,1,1;2,2,2,2;\tfrac{64}{k}\Bigr)\Bigr],
\end{equation}
the ${}_5F_4$ series converging for $|64/k|\le1$, including the
boundary circle by $\sum b-\sum a=3/2>0$. We use \eqref{eq:rogers}
only for numerical cross-checks (at the boundary points $k_\pm$,
$|k_\pm|=64$).

\begin{remark}[The two cuts]\label{rem:cuts}
As a function of the shift $c$, the critical locus is the segment $K=[-8,8]$;
as a function of Samart's parameter $k$ with $c=\sqrt{k}$, the critical locus
is the segment $[0,64]$, since $c^2\in[0,64]$ with $c\notin[-8,8]$ is
impossible. The continuation machine of Section~\ref{sec:machine} is
formulated in the $k$-variable with the cut $[0,64]$. This distinction is
invisible at $k=1$ (where $\sqrt{k}=1$), but it is essential for the
correct formulation of the propagation and for understanding the
wrong-sheet phenomenon of Remark~\ref{rem:wrongsheet}.
\end{remark}

\subsection{The modular parametrization \texorpdfstring{$s_2$}{s2}}\label{subsec:s2}

Define
\begin{equation}\label{eq:s2def}
  s_2(\tau):=-\frac{\Delta(\tau+\tfrac12)}{\Delta(2\tau+1)}.
\end{equation}

\begin{lemma}[Product expansion]\label{lem:s2prod}
\begin{equation}\label{eq:s2prod}
  s_2(\tau)=q^{-1}\prod_{n\ge1}\bigl(1+q^{2n-1}\bigr)^{24}
  =q^{-1}+24+276q+2048q^2+11202q^3+49152q^4+184024q^5+\cdots.
\end{equation}
In particular $s_2$ is holomorphic on $\Hh$, has integral $q$-expansion,
and satisfies $s_2(\tau+1)=s_2(\tau)$.
\end{lemma}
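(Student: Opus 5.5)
The plan is to compute $\Delta(\tau+\tfrac12)$ and $\Delta(2\tau+1)$ directly from the product $\Delta=\eta^{24}=q\prod_{n\ge1}(1-q^n)^{24}$, and then simplify their quotient.

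For the numerator, substituting $\tau\mapsto\tau+\tfrac12$ sends $q\mapsto -q$. This gives
\[
  \Delta(\tau+\tfrac12)=-q\prod_{n\ge1}\bigl(1-(-q)^n\bigr)^{24}.
\]
For the denominator, $2\tau+1$ sends $q\mapsto q^2$, and since $\e^{2\pi\ii}=1$ we get
\[
  \Delta(2\tau+1)=q^2\prod_{n\ge1}(1-q^{2n})^{24}.
\]
The minus signs cancel in the quotient, leaving
\[
  s_2(\tau)=q^{-1}\prod_{n\ge1}\Bigl(\frac{1-(-q)^n}{1-q^{2n}}\Bigr)^{24}.
\]

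Next I split the numerator product by the parity of $n$. Even $n=2m$ contribute $(1-q^{2m})$, which cancels against the denominator; odd $n=2m-1$ contribute $(1+q^{2m-1})$. The cancellation is legitimate because all products converge absolutely for $|q|<1$. This yields exactly \eqref{eq:s2prod}.

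The displayed coefficients come from expanding $\prod(1+q^{2n-1})^{24}$ to order $q^6$. This is routine: for instance, the coefficient of $q^1$ in the product is $24$ and that of $q^2$ is $\binom{24}{2}=276$. I would verify the remaining coefficients by a finite computation, for example via $\log\prod(1+q^{2n-1})=\sum_n\sum_k(-1)^{k+1}q^{k(2n-1)}/k$, and would simply state them as checked.

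Finally, the product converges to a nonvanishing holomorphic function on $\Hh$, and $q^{-1}$ is holomorphic there, so $s_2$ is holomorphic on $\Hh$ (which also shows the denominator $\Delta(2\tau+1)$ never vanishes). Every factor $(1+q^{2n-1})^{24}$ has integer coefficients, so the expansion is integral. Periodicity $s_2(\tau+1)=s_2(\tau)$ follows because $s_2$ is a function of $q$ alone.

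There is no real obstacle here. The only care point is the sign bookkeeping under $q\mapsto -q$: the prefactor $-q$ must combine with the explicit minus sign in \eqref{eq:s2def} to give a positive leading term $q^{-1}$.
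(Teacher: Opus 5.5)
Your proof is correct and is essentially the paper's argument: you compute $\Delta(\tau+\tfrac12)=-q\prod(1-(-q)^n)^{24}$ and $\Delta(2\tau+1)=q^2\prod(1-q^{2n})^{24}$, split the numerator product by parity, cancel the even-index factors, and treat the displayed coefficients as a finite integer computation. The only difference is that you substitute directly into $\Delta=q\prod(1-q^n)^{24}$ instead of going through $\eta(\tau+\tfrac12)$ and its fractional prefactor, which is a harmless and slightly cleaner variation.
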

\begin{proof}
From $\eta(\tau+\tfrac12)=\e^{\pi \ii(\tau+1/2)/12}\prod_{n\ge1}(1-(-1)^nq^n)$
we get $\Delta(\tau+\tfrac12)=\e^{2\pi \ii(\tau+1/2)}\prod(1-(-1)^nq^n)^{24}
=-q\prod(1-(-1)^nq^n)^{24}$,
and from $\eta(2\tau+1)=\e^{\pi \ii(2\tau+1)/12}\prod(1-q^{2n})$ we get
$\Delta(2\tau+1)=q^2\prod(1-q^{2n})^{24}$.
Splitting $\prod(1-(-1)^nq^n)=\prod(1+q^{2n-1})\prod(1-q^{2n})$ and
cancelling $\prod(1-q^{2n})$ gives \eqref{eq:s2prod}; the expansion is a
finite integer computation. Periodicity is immediate from the product expansion, since
$q=\e^{2\pi\ii\tau}$ is $1$-periodic.
\end{proof}

\begin{theorem}[The $\lambda$-identity for $s_2$]\label{thm:s2lambda}
For every $\tau\in\Hh$,
\begin{equation}\label{eq:s2lambda}
  s_2(\tau)=\frac{16}{\lambda(2\tau)\bigl(1-\lambda(2\tau)\bigr)}.
\end{equation}
\end{theorem}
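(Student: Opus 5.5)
We prove \eqref{eq:s2lambda} by writing both sides as infinite products in $q$ and comparing them; no modular-forms machinery is needed beyond the Jacobi triple product. Put $Q:=q_\vartheta(2\tau)=\e^{\pi\ii\cdot2\tau}=q$. In the variable $2\tau$ the theta constants of the Notation section become products in $q$. Let
\[
  A:=\prod_{n\ge1}(1+q^{2n}),\qquad B:=\prod_{n\ge1}(1+q^{2n-1}),\qquad
  C:=\prod_{n\ge1}(1-q^{2n-1}).
\]
Then
\[
  \vartheta_2(2\tau)=2q^{1/4}\prod(1-q^{2n})A^2,\qquad
  \vartheta_3(2\tau)=\prod(1-q^{2n})B^2,\qquad
  \vartheta_4(2\tau)=\prod(1-q^{2n})C^2 .
\]

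The key step is Jacobi's identity $\vartheta_3^4=\vartheta_2^4+\vartheta_4^4$, which we take as classical. It gives
\[
  1-\lambda=\vartheta_4^4/\vartheta_3^4,
\]
so
\[
  \lambda(1-\lambda)=\frac{\vartheta_2^4\vartheta_4^4}{\vartheta_3^8}
  =\frac{16\,q\,A^8C^8}{B^{16}} .
\]
The right-hand side of \eqref{eq:s2lambda} is therefore $q^{-1}B^{16}/(A^8C^8)$.

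We now reduce $A^8C^8$ using Euler's identity $\prod(1+q^n)\prod(1-q^{2n-1})=1$. The first factor splits as $\prod(1+q^n)=AB$, so Euler's identity says $ABC=1$. Hence $A^8C^8=B^{-8}$, and the right-hand side becomes $q^{-1}B^{24}$. This is exactly the product expansion of $s_2$ in Lemma~\ref{lem:s2prod}.

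Both sides are holomorphic on $\Hh$. By Lemma~\ref{lem:s2prod} we know $s_2$ is holomorphic. The right-hand side is holomorphic because $\lambda(2\tau)\neq0,1$ on $\Hh$, which holds since the theta products do not vanish there. Both sides are equal as convergent products for $|q|<1$, so they agree everywhere.

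The only nontrivial input is the quartic Jacobi identity. It is classical, and if a self-contained argument is wanted it follows from the triple product by the standard argument. The fractional-power bookkeeping $q^{1/4}$ also needs care: $(q^{1/4})^4=q$ unambiguously because we use the fixed branch $\e^{\pi\ii\tau/2}$. Beyond these two points we expect no real obstacle.
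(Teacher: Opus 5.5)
Your proof is correct and is essentially the paper's argument: both use Jacobi's quartic identity to rewrite $16/(\lambda(1-\lambda))$ as $16\vartheta_3^8/(\vartheta_2^4\vartheta_4^4)$ at $2\tau$, expand it as a $q$-product, and remove the surplus factors with Euler's identity. Your relation $ABC=1$ is exactly the paper's $\prod_{n\ge1}(1+q^{2n})(1-q^{4n-2})=1$, only packaged differently.
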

\begin{proof}
By Jacobi's identity $\vartheta_3^4=\vartheta_2^4+\vartheta_4^4$,
$\lambda=\vartheta_2^4/\vartheta_3^4$ and $1-\lambda=\vartheta_4^4/\vartheta_3^4$,
so $16/(\lambda(1-\lambda))=16\vartheta_3^8/(\vartheta_2^4\vartheta_4^4)$.
The theta constants carry the nome $q_\vartheta=\e^{\pi\ii\tau}$; at the
doubled argument $2\tau$ this nome is $\e^{2\pi\ii\tau}=q$, so the product
expansions below are directly in $q$:
\[
  \frac{16\vartheta_3(2\tau)^8}{\vartheta_2(2\tau)^4\vartheta_4(2\tau)^4}
  =q^{-1}\prod_{n\ge1}\frac{(1+q^{2n-1})^{16}}{(1+q^{2n})^{8}(1-q^{2n-1})^{8}}.
\]
Now $(1+q^{2n-1})^{16}(1-q^{2n-1})^{-8}
=(1+q^{2n-1})^{24}\bigl[(1+q^{2n-1})(1-q^{2n-1})\bigr]^{-8}
=(1+q^{2n-1})^{24}(1-q^{4n-2})^{-8}$, hence
\[
  \frac{16\vartheta_3(2\tau)^8}{\vartheta_2(2\tau)^4\vartheta_4(2\tau)^4}
  =q^{-1}\prod_{n\ge1}(1+q^{2n-1})^{24}\cdot
   \Bigl[\prod_{n\ge1}(1+q^{2n})(1-q^{4n-2})\Bigr]^{-8}.
\]
The second product equals $1$ by Euler's identity
\[
  \prod_{n\ge1}(1+q^{2n})
  =\frac{\prod_{n\ge1}(1-q^{4n})}{\prod_{n\ge1}(1-q^{2n})}
  =\frac{\prod_{n\ge1}(1-q^{4n})}
        {\prod_{n\ge1}(1-q^{4n-2})\prod_{n\ge1}(1-q^{4n})}
  =\prod_{n\ge1}\frac1{1-q^{4n-2}}.
\]
Therefore the right side of \eqref{eq:s2lambda} equals
$q^{-1}\prod(1+q^{2n-1})^{24}=s_2(\tau)$ by Lemma~\ref{lem:s2prod}. (An
exact integer series check of the identity to order $q^{37}$ is included
in \texttt{cert0\_s2\_eq\_1.py} and \texttt{cert0\_s2\_n2pair.py} as an
independent exact verification.)
\end{proof}

\begin{lemma}[Degree of $s_2$ on $X_0(4)$]\label{rem:deg2}
The function $s_2$ is $\Gamma_0(4)$-invariant and has degree $2$ on
$X_0(4)$, with simple poles at the cusps $\infty$ and $0$, a double
zero at the cusp $\tfrac12$, and no other zeros or poles.
\end{lemma}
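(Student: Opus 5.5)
Our plan is to read everything off the $\lambda$-identity, Theorem~\ref{thm:s2lambda}, together with the classical behaviour of $\lambda$ under $\Gamma(2)$. The function $\lambda$ is a Hauptmodul for $\Gamma(2)$. The group $\Gamma(2)$ is normal in $\mathrm{SL}_2(\Zz)$ with quotient $S_3$, and $S_3$ acts on $\lambda$ through the anharmonic group $\{\lambda,1-\lambda,1/\lambda,\dots\}$. The function $u(\lambda):=\lambda(1-\lambda)$ is invariant exactly under the order-two subgroup $\{\lambda,1-\lambda\}$, which is induced by $S=\begin{pmatrix}0&-1\\1&0\end{pmatrix}$: we have $\lambda(-1/\tau)=1-\lambda(\tau)$. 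Hence $16/(\lambda(1-\lambda))$ is a Hauptmodul-of-degree-two function for the group $\Gamma_\lambda:=\langle\Gamma(2),S\rangle$, the theta group.

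\textbf{Invariance.} Write $s_2(\tau)=F(2\tau)$ with $F=16/(\lambda(1-\lambda))$. Then $s_2$ is invariant under the conjugate group $\begin{pmatrix}1&0\\0&2\end{pmatrix}^{-1}\Gamma_\lambda\begin{pmatrix}1&0\\0&2\end{pmatrix}$. It therefore suffices to check that $\Gamma_0(4)$ lies in this conjugate. For $\gamma=\begin{pmatrix}a&b\\c&d\end{pmatrix}\in\Gamma_0(4)$, the conjugate $\begin{pmatrix}a&2b\\c/2&d\end{pmatrix}$ acts on $2\tau$. It has lower-left entry $c/2$, which is even, and upper-right entry $2b$, which is even, so it lies in $\Gamma_0(2)\cap\Gamma^0(2)$. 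Since $a,d$ are odd, this group equals $\Gamma(2)$ up to $\pm1$. So $F$ is invariant and $s_2\circ\gamma=s_2$. Alternatively, we can check the generators $T$ and $\begin{pmatrix}1&0\\4&1\end{pmatrix}$ of $\Gamma_0(4)/\pm1$ directly: $T$ is Lemma~\ref{lem:s2prod}, and the other corresponds to $\begin{pmatrix}1&0\\2&1\end{pmatrix}\in\Gamma(2)$.

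\textbf{Degree.} The map $\tau\mapsto2\tau$ identifies $X_0(4)$ with $X(2)$, since $\Gamma_0(4)$ is conjugate to $\Gamma(2)$. This identification turns $s_2$ into $F$ on $X(2)\cong\mathbb{P}^1_\lambda$, and $F$ is a rational function of $\lambda$ of degree $2$, so $s_2$ has degree $2$ on $X_0(4)$.

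\textbf{Divisor.} On the $\lambda$-line, $F$ has simple poles at $\lambda=0$ and $\lambda=1$, a double zero at $\lambda=\infty$, and is finite and nonzero elsewhere. The three cusps of $X(2)$ are the points $\lambda=0,1,\infty$, which are attained at the cusps $i\infty$, $0$, and $1$ of the $2\tau$-variable respectively. Pulling back via $2\tau$: $\tau\to i\infty$ gives $\lambda(2\tau)\to0$, a simple pole, consistent with $q^{-1}$ in \eqref{eq:s2prod}. At $\tau=0$ we also have $2\tau=0$, so $\lambda\to1$ and we get a simple pole. At $\tau=1/2$ we have $2\tau=1$, so $\lambda\to\infty$ and we get a double zero. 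Since $\lambda$ takes no value in $\{0,1,\infty\}$ on $\Hh$, there are no other zeros or poles.

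As a consistency check, the sum of the orders is $-1-1+2=0$. The widths of the cusps of $\Gamma_0(4)$ are $1,4,1$ at $\infty,0,1/2$, and we must verify that the orders measured in local parameters match. We expect this cusp bookkeeping to be the main obstacle. One must confirm that the order at $0$ computed via the local parameter $e^{-2\pi i/(4\tau)}$ is exactly $-1$, by applying the Fricke involution and using $\lambda(-1/\tau)=1-\lambda(\tau)$. One must also confirm that near $1/2$ the order is $2$ rather than $1$: there $\lambda\sim16/q'$, giving $F\sim -q'^{2}/16$, in terms of a width-one parameter. If the degree argument via the conjugation is done carefully, this bookkeeping is automatic.
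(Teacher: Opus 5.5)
Your proposal is correct and is essentially the paper's proof. Both write $s_2=R\circ\lambda(2\tau)$ with $R(w)=16/(w(1-w))$, carry $\Gamma_0(4)$ onto $\Gamma(2)$ by $\tau\mapsto2\tau$ (your matrix $\bigl(\begin{smallmatrix}a&2b\\c/2&d\end{smallmatrix}\bigr)$ is exactly the paper's), get $\deg s_2=\deg R=2$, and pull back the divisor of $R$ via the cusp values $\lambda=0,1,\infty$ at $\infty,0,\tfrac12$; the cusp-width bookkeeping is automatic because this identification is an isomorphism $X_0(4)\cong X(2)$.

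One harmless slip: the group fixing $F(2\tau)$ is $\bigl(\begin{smallmatrix}1&0\\0&2\end{smallmatrix}\bigr)\Gamma_\lambda\bigl(\begin{smallmatrix}1&0\\0&2\end{smallmatrix}\bigr)^{-1}$, not the conjugate you wrote, but your explicit matrix computation uses the correct direction.
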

\begin{proof}
The modular lambda function $\lambda$ is a Hauptmodul for $X(2)$
\cite{DS}, and conjugation by
$\bigl(\begin{smallmatrix}2&0\\0&1\end{smallmatrix}\bigr)$ identifies
$\Gamma_0(4)$ with $\Gamma(2)$: for
$\gamma=\bigl(\begin{smallmatrix}a&b\\4c&d\end{smallmatrix}\bigr)
\in\Gamma_0(4)$ one has
$\bigl(\begin{smallmatrix}2&0\\0&1\end{smallmatrix}\bigr)\gamma
 \bigl(\begin{smallmatrix}1/2&0\\0&1\end{smallmatrix}\bigr)
=\bigl(\begin{smallmatrix}a&2b\\2c&d\end{smallmatrix}\bigr)\in\Gamma(2)$,
and both groups have index $6$, so $\lambda(2\tau)$ is a Hauptmodul for
$X_0(4)$. By Theorem~\ref{thm:s2lambda}, $s_2=R\circ\lambda(2\tau)$
with $R(w)=16/(w(1-w))$, so $s_2$ is $\Gamma_0(4)$-invariant, and
$\deg s_2=\deg R=2$ since $\lambda(2\tau)$ has degree $1$. The rational
map $R$ has simple poles at $w=0,1$ and a double zero at $w=\infty$;
composing with the standard cusp values of $\lambda$ \cite{DS},
$\lambda(2\tau)$ takes the values $0,1,\infty$ at the cusps
$\infty,0,\tfrac12$ of $X_0(4)$ respectively, which gives the stated
divisor. Since a Hauptmodul is univalent, all orders are with respect
to the cusp local parameters. It follows that $s_2$ is \emph{not} a
Hauptmodul; the two preimages of a generic value are related by the
Fricke involution $\tau\mapsto-1/(4\tau)$ (Lemma~\ref{lem:fricke}).
\end{proof}

\subsection{The newform \texorpdfstring{$g_7$}{g7} and the constants \texorpdfstring{$M_7$, $d_7$}{M7, d7}}\label{subsec:g7}

The eta product $g_7=\eta(\tau)^3\eta(7\tau)^3$ lies in
$S_3(\Gamma_0(7),\chi_{-7})$. Its coefficients are given by the Jacobi
triple product:
\begin{equation}\label{eq:jacobi}
  a_n=\sum_{\substack{i,j\ge0\\T_i+7T_j=n-1}}(-1)^{i+j}(2i+1)(2j+1),
  \qquad T_i=\tfrac{i(i+1)}2,
\end{equation}
since Jacobi's identity $\eta(\tau)^3=\sum_{i\ge0}(-1)^i(2i+1)q^{T_i+1/8}$
gives $g_7=\sum_{i,j\ge0}(-1)^{i+j}(2i+1)(2j+1)q^{T_i+7T_j+1}$, and
collecting $T_i+7T_j+1=n$ gives \eqref{eq:jacobi}.

\begin{proposition}[Theta identity; proved in Appendix~\ref{app:theta}]\label{prop:theta}
\begin{equation}\label{eq:theta}
  g_7(\tau)=\frac12{\sum_{\alpha\in\OO_K}}^{\!\prime}\;\alpha^2 q^{N(\alpha)}.
\end{equation}
Consequently, for $\ReT s>2$,
\begin{equation}\label{eq:hecke}
  {\sum_{\alpha\in\OO_K}}^{\!\prime}\;\frac{\alpha^2}{|\alpha|^{2s}}=2L(g_7,s),
  \qquad
  {\sum_{\alpha\in\OO_K}}^{\!\prime}\;\frac{\bar\alpha^2}{|\alpha|^{2s}}=2L(g_7,s),
\end{equation}
the second equality following from the first by complex conjugation since
$L(g_7,s)$ has rational coefficients, and
\begin{equation}\label{eq:abs2}
  {\sum_{\alpha\in\OO_K}}^{\!\prime}\;|\alpha|^{-4}=2\zeta_K(2).
\end{equation}
\end{proposition}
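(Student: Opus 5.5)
The plan is to prove \eqref{eq:theta} by showing that both sides are weight-three cusp forms on $\Gamma_0(7)$ with character $\chi_{-7}$, and then comparing finitely many Fourier coefficients via a Sturm bound. The Dirichlet-series identities \eqref{eq:hecke} and \eqref{eq:abs2} then follow by Mellin transform and a direct count.

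\textbf{Modularity of the theta side.} The function $\theta(\tau):=\frac12\sum_{\alpha\in\OO_K}\alpha^2q^{N(\alpha)}$ is the theta series of the norm form $N(a+b\varpi)=a^2+ab+2b^2$, a positive binary quadratic form of discriminant $-7$ and level $7$. It carries the harmonic polynomial $P(\alpha)=\alpha^2$, which is harmonic for this form because it is holomorphic in $\alpha$. By the standard theorem on theta series with spherical harmonics (Hecke, Schoeneberg; see e.g.\ Miyake, Thm.~4.9.3, or Iwaniec), $\theta$ lies in $M_{1+2}(\Gamma_0(7),\chi_{-7})$. It is cuspidal because $P$ has positive degree. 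Equivalently, $\theta$ is the weight-three CM form attached to the Hecke character $(\alpha)\mapsto\alpha^2$ of $K$. This character is well defined on ideals since $\OO_K^\times=\{\pm1\}$ and $(\pm1)^2=1$, and it has trivial conductor. The zero term drops out automatically, so the prime on the sum is harmless.

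\textbf{Modularity of the eta side and coefficient comparison.} The eta product $g_7=\eta^3(\tau)\eta^3(7\tau)$ lies in $S_3(\Gamma_0(7),\chi_{-7})$ by the Gordon--Hughes--Newman criterion: weight $3$; $\sum\delta r_\delta=24\equiv0\pmod{24}$; $\sum (7/\delta)r_\delta=24\equiv0$; and the character is $\chi\bigl((-1)^3\cdot7^3\bigr)=\chi_{-7}$. The space $S_3(\Gamma_0(7),\chi_{-7})$ is one-dimensional, and the Sturm bound is $\frac{3}{12}[\mathrm{SL}_2(\Zz):\Gamma_0(7)]=\frac{3\cdot 8}{12}=2$. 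So it suffices to match the coefficients of $q^1$ and $q^2$. On the theta side these are:
\begin{itemize}
\item $N(\alpha)=1$ gives $\alpha=\pm1$, so the coefficient is $\frac12\cdot2=1$;
\item $N(\alpha)=2$ gives $\alpha\in\{\pm\varpi,\pm\bar\varpi\}$, so the coefficient is $\frac12\cdot2(\varpi^2+\bar\varpi^2)=-3$ by \eqref{eq:pirels}.
\end{itemize}
These agree with $a_1=1$ and $a_2=-3$ from \eqref{eq:jacobi}. As an extra safeguard I would also check $a_4=5$, which requires collecting the norm-$4$ elements $\pm2,\pm\varpi^2,\pm\bar\varpi^2$.

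\textbf{The Dirichlet series.} Summing $\alpha^2|\alpha|^{-2s}=\alpha^2N(\alpha)^{-s}$ by norm gives $\sum_n 2a_n n^{-s}=2L(g_7,s)$, with absolute convergence for $\ReT s>2$ from the trivial bound $|a_n|\ll n^{1+\epsilon}$. The conjugate identity follows because the $a_n$ are real, or directly via the symmetry $\alpha\mapsto\bar\alpha$. For \eqref{eq:abs2}, each nonzero principal ideal has exactly two generators, and $h=1$, so $\sum'|\alpha|^{-4}=2\sum_{\mathfrak a}N\mathfrak a^{-2}=2\zeta_K(2)$.

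\textbf{Main obstacle.} The step needing the most care is justifying the modularity of the weighted theta series with the correct level and character: either citing the Hecke--Schoeneberg theorem precisely, or using the Hecke-character/CM-form construction, where one must check that the conductor is trivial and the level is $|d_K|=7$. The rest is a finite computation.
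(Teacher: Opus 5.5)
Your proposal is correct and follows essentially the same route as the paper's Appendix~\ref{app:theta}. Both sides are placed in $S_3(\Gamma_0(7),\chi_{-7})$, via the eta-product criterion for $g_7$ and Hecke's theorem for the Gr\"o\ss encharakter $(\alpha)\mapsto\alpha^2$ of conductor $(1)$; the Sturm bound $2$, or equivalently $\dim S_3(\Gamma_0(7),\chi_{-7})=1$, then reduces \eqref{eq:theta} to matching $a_1=1$ and $a_2=-3$. Your explicit derivation of \eqref{eq:hecke} and \eqref{eq:abs2}, by regrouping the absolutely convergent lattice sums by norm and using $h(-7)=1$ and $\OO_K^\times=\{\pm1\}$, supplies the steps the paper leaves implicit.
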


\begin{lemma}[Functional equations]\label{lem:FE}
\textup{(i)} The completed $L$-function
$\cL(s):=7^{s/2}(2\pi)^{-s}\Gamma(s)L(g_7,s)$ satisfies
$\cL(s)=\cL(3-s)$ \textup{(}root number $+1$\textup{)}. Hence $L(g_7,0)=0$
and
\begin{equation}\label{eq:FE}
  M_7=L'(g_7,0)=\cL(3)=7^{3/2}(2\pi)^{-3}\Gamma(3)\,L(g_7,3)
  =\frac{7\sqrt7}{4\pi^3}\,L(g_7,3).
\end{equation}
\textup{(ii)} For $\chi=\chi_{-7}$, the completed $L$-function
$\cL(s,\chi):=(7/\pi)^{(s+1)/2}\Gamma\bigl((s+1)/2\bigr)L(s,\chi)$
satisfies $\cL(s,\chi)=\cL(1-s,\chi)$ \textup{(}root number $+1$\textup{)}.
Since $\chi$ is odd, $L(\chi,-1)=0$, and the simple pole of
$\Gamma((s+1)/2)$ at $s=-1$ \textup{(}residue $2$ in $s$\textup{)} against
the simple zero of $L$ gives $\cL(-1,\chi)=2d_7$; hence
\begin{equation}\label{eq:FEchi}
  2d_7=\cL(2,\chi)=\Bigl(\frac7\pi\Bigr)^{3/2}\Gamma\Bigl(\frac32\Bigr)L(\chi_{-7},2),
  \qquad\text{i.e.}\qquad
  d_7=\frac{7\sqrt7}{4\pi}\,L(\chi_{-7},2).
\end{equation}
\textup{(iii)} Since $\zeta_K(2)=\zeta(2)L(\chi_{-7},2)$,
\begin{equation}\label{eq:zetaKd7}
  \zeta_K(2)=\frac{\pi^2}{6}\,L(\chi_{-7},2)
  =\frac{2\pi^3}{21\sqrt7}\,d_7 .
\end{equation}
\end{lemma}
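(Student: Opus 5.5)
The plan is to take the functional equations themselves as standard inputs and to spend the effort on reading off the special values at the edge of the critical strip.

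\textbf{Part (i).} The newform $g_7\in S_3(\Gamma_0(7),\chi_{-7})$ has CM by $K$. By Proposition~\ref{prop:theta} it is the theta series of the Hecke character $\alpha\mapsto\alpha^2$ on $\OO_K$, so $L(g_7,s)$ is a Hecke $L$-function of $K$. Hecke's theory, or equivalently the Atkin--Lehner/Fricke theory for newforms of level $7$, gives analytic continuation and the symmetry $\cL(s)=\varepsilon\,\cL(3-s)$ with $|\varepsilon|=1$.

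To pin down $\varepsilon=+1$, I would use the Mellin transform $\cL(s)=7^{s/2}\int_0^\infty g_7(\ii t/\sqrt7)\,t^{s-1}\,\id t$. The eta quotient transforms explicitly under $W_7$: from $\eta(-1/\tau)=\sqrt{-\ii\tau}\,\eta(\tau)$ one gets
\[
g_7\!\left(-\tfrac{1}{7\tau}\right)=\left(\tfrac{-\ii\cdot 7\tau}{1}\right)^{3/2}\Bigl(\tfrac{1}{\sqrt7}\Bigr)^{3}\cdots .
\]
A short computation gives $g_7(-1/(7\tau))=-7^{3/2}\tau^3\,g_7(\tau)\cdot \ii^{\,?}$, and on the imaginary axis $\tau=\ii t/\sqrt7$ the sign works out so that $g_7(\ii/(\sqrt7 t))=t^3 g_7(\ii t/\sqrt7)$. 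Splitting the integral at $t=1$ then gives $\cL(s)=\cL(3-s)$. The main care point is this eta-transformation sign; I would cross-check it numerically, since the paper confirms everything to many digits.

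Given the symmetry, $\Gamma(s)$ has a pole at $s=0$ while $\cL(0)=\cL(3)$ is finite, so $L(g_7,0)=0$. Moreover $\Gamma(s)\sim 1/s$ there, so $\cL(0)=\lim_{s\to0} 7^{s/2}(2\pi)^{-s}\Gamma(s)L(g_7,s)=L'(g_7,0)$. Hence $M_7=\cL(3)$, and plugging in $\Gamma(3)=2$ gives \eqref{eq:FE}.

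\textbf{Part (ii).} This is the classical functional equation for an odd primitive character. Its completed function is $(q/\pi)^{(s+1)/2}\Gamma((s+1)/2)L(s,\chi)$ with root number $\tau(\chi)/(\ii\sqrt q)$. For a real odd character, Gauss's evaluation $\tau(\chi_{-7})=\ii\sqrt7$ makes the root number $1$.

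At $s=-1$, $\Gamma((s+1)/2)$ has a simple pole with $\Gamma((s+1)/2)\sim 2/(s+1)$. Finiteness of $\cL(-1,\chi)=\cL(2,\chi)$ forces $L(\chi,-1)=0$ and gives $\cL(-1,\chi)=2L'(\chi,-1)=2d_7$. Evaluating $\cL(2,\chi)$ with $\Gamma(3/2)=\sqrt\pi/2$ yields
\[
2d_7=\frac{7\sqrt7}{\pi^{3/2}}\cdot\frac{\sqrt\pi}{2}\,L(\chi,2),
\]
that is, \eqref{eq:FEchi}.

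\textbf{Part (iii).} The factorization $\zeta_K(s)=\zeta(s)L(s,\chi_{-7})$ holds for the quadratic field $K$. Using $\zeta(2)=\pi^2/6$ and substituting $L(\chi,2)=4\pi d_7/(7\sqrt7)$ from (ii) gives $\zeta_K(2)=\frac{\pi^2}{6}\cdot\frac{4\pi d_7}{7\sqrt7}=\frac{2\pi^3 d_7}{21\sqrt7}$, which is \eqref{eq:zetaKd7}.

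The only genuine obstacle is fixing the root-number signs, so I would check both numerically (e.g.\ $\cL(s)=\cL(3-s)$ at one point) in addition to the Fricke and Gauss-sum arguments.
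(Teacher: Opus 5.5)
Your proposal follows the paper's proof in all three parts: the Mellin transform along the imaginary axis, the Fricke transformation of the eta product, and splitting at $t=1$ in (i); the Gauss-sum root number and the residue $2$ of $\Gamma((s+1)/2)$ at $s=-1$ in (ii); and the factorization $\zeta_K=\zeta\cdot L(\cdot,\chi_{-7})$ in (iii). Your special-value computations are all correct. However, in (i) the two formulas that actually decide the root number are not right as written, and both must be fixed for the argument to go through.

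First, your Mellin representation has a spurious prefactor. The integral $\int_0^\infty g_7(\ii t/\sqrt7)\,t^{s-1}\,\id t=\sum_n a_n\Gamma(s)(2\pi n/\sqrt7)^{-s}$ is already $\cL(s)$, because the rescaling $t\mapsto t/\sqrt7$ produces the factor $7^{s/2}$. With your extra $7^{s/2}$ in front, the splitting argument would prove the symmetry of $7^{-s/2}\cL(s)$, not of $\cL(s)$.

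Second, you should compute the sign rather than leave it as $\ii^{?}$ and defer to a numerical check; a floating-point check is no proof without rigorous error bounds. The computation is one line, and there is no $(1/\sqrt7)^3$ factor:
$g_7(-1/(7\tau))=(-\ii\cdot7\tau)^{3/2}(-\ii\tau)^{3/2}\eta(7\tau)^3\eta(\tau)^3=7^{3/2}(-\ii\tau)^3g_7(\tau)=\ii\,7^{3/2}\tau^3g_7(\tau)$.
At $\tau=\ii t/\sqrt7$ one has $-1/(7\tau)=\ii/(\sqrt7\,t)$ and $\ii\,7^{3/2}\tau^3=t^3$. This gives exactly the relation $g_7(\ii/(\sqrt7\,t))=t^3g_7(\ii t/\sqrt7)$ that you asserted, and hence root number $+1$. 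With these two corrections your argument is the paper's proof.
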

\begin{proof}
(i) Write $g_7(\ii y)=\sum_{n\ge1}a_n\e^{-2\pi ny}$. Termwise
Mellin transformation gives
\[
  \cL(s)=\int_0^\infty g_7\bigl(\tfrac{\ii y}{\sqrt7}\bigr)\,
  y^s\,\frac{\id y}{y},
\]
absolutely convergent for every $s$: $g_7(\ii y)$ decays exponentially
as $y\to\infty$, and by the transformation below also as $y\to0$. From
$\eta(-1/\tau)=(-\ii\tau)^{1/2}\eta(\tau)$ and
$g_7(\tau)=\eta(\tau)^3\eta(7\tau)^3$,
\[
  g_7\bigl(-\tfrac1{7\tau}\bigr)=\ii\,7^{3/2}\tau^3\,g_7(\tau),
  \qquad\text{hence, at $\tau=\ii y/\sqrt7$,}\qquad
  g_7\bigl(\tfrac{\ii}{\sqrt7\,y}\bigr)
  =y^3\,g_7\bigl(\tfrac{\ii y}{\sqrt7}\bigr)\quad(y>0).
\]
Splitting the integral at $y=1$ and substituting $y\mapsto1/y$ in the
lower half yields
\[
  \cL(s)=\int_1^\infty g_7\bigl(\tfrac{\ii y}{\sqrt7}\bigr)
  \bigl(y^s+y^{3-s}\bigr)\,\frac{\id y}{y}=\cL(3-s),
\]
with root number $+1$. (The identity was independently verified
numerically to $52$ digits in \texttt{lvalue\_g7.py}.) Since $\Gamma$
has a simple pole at $s=0$ while
$\cL(0)=\cL(3)$ is finite, $L(g_7,s)$ vanishes at $s=0$, and
$L'(g_7,0)=\lim_{s\to0}\Gamma(s)L(g_7,s)=\cL(3)$; finally
$\cL(3)=7^{3/2}(2\pi)^{-3}\cdot 2\cdot L(g_7,3)$ as stated. Part (ii) is
the classical functional equation of an odd primitive Dirichlet character
(the Gauss sum of $\chi_{-7}$ is $\ii\sqrt7$), and (iii) is the
factorization $\zeta_K=\zeta\cdot L(\chi_{-7},\cdot)$ at $s=2$ combined
with \eqref{eq:FEchi}. Numerical confirmations are collected in
Section~\ref{sec:numerics}.
\end{proof}

\subsection{Samart's exponential-sum formula and its lattice form}\label{subsec:EK}

For $j\ge1$ and $\tau\in\Hh$ define
\begin{equation}\label{eq:Uj}
  U_j(\tau):=\sum_{m\neq0}\frac1m\sum_{n\in\Zz}(jm\tau+n)^{-3}
  =2\pi^3\sum_{m\ge1}\frac{\cos(j\pi m\tau)}{m\,\sin^3(j\pi m\tau)},
\end{equation}
where the second form follows by pairing the indices $m$ and $-m$ and
applying the partial fraction identity
$\sum_{n\in\Zz}(w+n)^{-3}=\pi^3\cos(\pi w)/\sin^3(\pi w)$
to $w=jm\tau$; the double series is absolutely convergent since
$\sum_{m\neq0}|m|^{-1}\sum_n|jm\tau+n|^{-3}$
is $O\bigl(\sum_m|m|^{-3}y^{-2}\bigr)<\infty$, locally uniformly in $\tau$.
Define Samart's exponential-sum expression
\begin{equation}\label{eq:EK}
  \mathrm{EK}(\tau):=\ImT\Bigl[2\pi\tau+\frac{2}{\pi^3}\bigl(U_1(\tau)-4U_4(\tau)\bigr)\Bigr],
\end{equation}
real-analytic on all of $\Hh$.

\begin{theorem}[Samart, {\cite[Prop.~2.1(i)]{Sa13}}]\label{thm:samart}
For $\tau\in\Hh$ with $\ImT\tau\ge 1/2$ one has
\begin{equation}\label{eq:samart}
  n_2\bigl(s_2(\tau)\bigr)=\mathrm{EK}(\tau),
\end{equation}
i.e.\ $2\m\bigl(f+\sqrt{s_2(\tau)}\bigr)=\mathrm{EK}(\tau)$ for either
choice of the square root. In particular, on the imaginary axis
$s_2(\ii t)\ge64$ for $t\ge1/2$ and $n_2\bigl(s_2(\ii t)\bigr)=\mathrm{EK}(\ii t)$.
\end{theorem}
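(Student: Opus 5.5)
The statement is Samart's theorem, quoted from \cite[Prop.~2.1(i)]{Sa13}. In the paper it is an input rather than something proved here, so what I would give is a reconstruction of Samart's argument in the present notation. The plan is to compare derivatives in $\tau$ on the imaginary axis $\tau=\ii t$, $t\ge 1/2$, where everything is real. Then I would extend to the full half-plane $\ImT\tau\ge1/2$ by real-analyticity, using the fact that $s_2$ keeps that region away from the cut $[0,64]$.

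\textbf{Step 1: the image avoids the cut.} From the product expansion \eqref{eq:s2prod}, $s_2(\ii t)$ is real, tends to $+\infty$ as $t\to\infty$, and is monotone. By Theorem~\ref{thm:s2lambda} the value $s_2(\ii t)=64$ occurs exactly where $\lambda(2\ii t)=1/2$, i.e.\ at $2t=1$. Hence $s_2(\ii t)\ge 64$ for $t\ge 1/2$. Next I would check that the image of the whole region $\{\ImT\tau\ge1/2\}$ misses $[0,64)$. The region is a fundamental-domain-type set for the action of $\Gamma_0(4)$ together with the Fricke involution, and its boundary maps into $\{|s|\ge64\}\cup[64,\infty)$. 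Since $s_2$ has degree $2$ (Lemma~\ref{rem:deg2}), one concludes that $\sqrt{s_2(\tau)}\notin[-8,8]$. Lemma~\ref{lem:FV} then makes $\tau\mapsto n_2(s_2(\tau))$ real-analytic there, and it is the real part of a holomorphic function.

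\textbf{Step 2: the derivative identity.} For $|k|>64$, expand $\log|\sqrt{k}+f|$ in powers of $f^2/k$ and integrate term by term over the torus. This gives Rogers' series \eqref{eq:rogers}: $n_2(k)=\ReT[\log k-\sum_{n}\binom{2n}{n}^3k^{-n}/n]$. Differentiating in $k$ produces the period $k\,\tfrac{d}{dk}\,n_2 = \ReT\,{}_3F_2(\tfrac12,\tfrac12,\tfrac12;1,1;64/k)$. By Clausen's formula and the $\lambda$-parametrization, this ${}_3F_2$ equals $\vartheta_3(2\tau)^4$ times an explicit factor, i.e.\ a weight-two form $F(\tau)$ for $\Gamma_0(4)$. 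On the other side, differentiating \eqref{eq:EK} termwise gives an Eisenstein series of weight $3$, namely $\sum_m\sum_n m^{-1}\cdot m(jm\tau+n)^{-4}$. I would then identify $\tfrac{d}{d\tau}\mathrm{EK}$ with $\ImT\bigl[2\pi F(\tau)\,\tfrac{s_2'}{s_2}\cdot\tfrac{1}{2\pi\ii}\bigr]$, which is an identity between weight-$3$ modular forms on $\Gamma_0(4)$. It follows by checking enough $q$-coefficients against the Sturm bound, or equivalently by matching the Lambert-series forms of $U_1-4U_4$ with $q\tfrac{d}{dq}\log s_2\cdot\vartheta_3(2\tau)^4$. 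I expect this step to be the main obstacle: getting the constants and the pairing $U_1-4U_4$ exactly right.

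\textbf{Step 3: the constant and the extension.} As $t\to\infty$, both sides behave like $\log s_2\approx 2\pi t$. Comparing the $q$-expansions, $\mathrm{EK}(\ii t)-n_2(s_2(\ii t))\to0$, because $U_j$ decays exponentially and $\ReT\log k=2\pi t+O(q)$. So the constant of integration vanishes on the axis. Both sides are harmonic (real parts of holomorphic functions of $\tau$) on the connected open set $\ImT\tau>1/2$, with the boundary line handled by continuity via Lemma~\ref{lem:continuity}. Equality on the segment of the imaginary axis therefore forces equality throughout, by the identity theorem applied to the holomorphic completions. Independence of the choice of square root is just Lemma~\ref{lem:continuity}.
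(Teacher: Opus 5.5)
The paper does not prove this statement. It quotes it from Samart, whose argument, as the paper summarizes it, computes $\mt\circ s_2$ from the Picard--Fuchs equation and a Wronskian. Your template is a legitimate alternative: Rogers' series at $k=\infty$, Clausen to identify the period with $\vartheta_3(2\tau)^4$, matching $\tau$-derivatives with an Eisenstein series, fixing the constant at $\ii\infty$, then continuing. In it a Sturm-bound check replaces the Wronskian.

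\textbf{The gap is Step~1.} As written it does not work, and it is exactly the step that determines the region. Its premise is false: $\lambda(1+\ii)=-1$ gives $s_2(\tfrac12+\tfrac{\ii}2)=-8$, so the line $\ImT\tau=\tfrac12$ does not map into $\{|s|\ge64\}$. In fact $\tfrac{\ii}2$ is a critical point of $s_2$ (the Fricke fixed point) with value $64$. The image of that line therefore leaves $64$ tangentially to the cut and runs just beside it; compare the small margins in the paper's certificate for the piece $y=\tfrac12$. Nor is $\{\ImT\tau\ge\tfrac12\}$ a fundamental domain for $\Gamma_0(4)$ together with Fricke.

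A correct argument goes as follows.
\begin{itemize}
\item $s_2=16/(\lambda(1-\lambda))$ lies in $(0,64]$ if and only if $\ReT\lambda(2\tau)=\tfrac12$.
\item Since $1-\overline{\lambda(u)}=\lambda(1/\bar u)$, $\lambda$ maps $|u|=1$ onto this line. Because $\lambda\colon\Hh\to\Cc\setminus\{0,1\}$ is a covering with deck group $\Gamma(2)/\{\pm1\}$, it follows that $s_2^{-1}([0,64])=\Gamma_0(4)\cdot\{|\tau|=\tfrac12\}$.
\item The element $\bigl(\begin{smallmatrix}a&b\\4c&d\end{smallmatrix}\bigr)$ sends this semicircle to one of radius $1/(2|d^2-4c^2|)$, which equals $\tfrac12$ only when $c=0$.
\end{itemize}
Hence $s_2\notin[0,64]$ on $\ImT\tau>\tfrac12$. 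On the boundary line, $s_2\in[0,64]$ only at the points $n+\tfrac{\ii}2$, where $s_2=64$. Without this, the harmonic continuation in Step~3 has no support, and Remark~\ref{rem:wrongsheet} shows the identity genuinely fails once the cut is crossed.

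\textbf{Step~2: the weight is $4$, not $3$.} Differentiating gives $\frac{\id}{\id\tau}U_j=-3j\sum_{m\ne0}\sum_n(jm\tau+n)^{-4}$. Also, with trivial character there are no nonzero weight-$3$ forms on $\Gamma_0(4)$, since $-I\in\Gamma_0(4)$. Taking $\ImT$ destroys modularity as well. What you need is the holomorphic identity $\frac{\id}{\id\tau}\tilde n(s_2(\tau))=2E'(\tau)$, where $\tilde n(k)=\log k-\sum_{n\ge1}\binom{2n}{n}^3k^{-n}/n$. Equivalently,
\[
  \vartheta_3(2\tau)^4\,q\frac{\id}{\id q}\log s_2(\tau)
  =\tfrac1{15}\bigl(E_4(\tau)-16E_4(4\tau)\bigr)\in M_4(\Gamma_0(4)).
\]
The Sturm bound here is $2$, and both sides begin $-1+16q+144q^2$.

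\textbf{Step~3.} Equality of two harmonic functions on a segment of the imaginary axis does not force equality nearby; for instance $\ReT\tau$ vanishes there. Instead, use that the holomorphic identity makes $\tilde n\circ s_2-2E$ constant on the open set near $\ii\infty$ where $|s_2|>64$. Your cusp asymptotics show its real part is $0$ there. Equality on that open set then propagates through $\{\ImT\tau>\tfrac12\}$ by harmonicity, using the corrected Step~1. It reaches the boundary line by Lemma~\ref{lem:continuity}. This is the same propagation principle the paper uses in Section~\ref{sec:machine}.
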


Samart's proof combines Lemma~\ref{lem:FV} with an explicit computation of
$\mt\circ s_2$ via the Picard--Fuchs equation of the family and a
Wronskian argument; we take it as a quoted theorem. Its holomorphic
content is the identity
\begin{equation}\label{eq:mtilde}
  \mt\bigl(\sqrt{s_2(\tau)}\bigr)=E(\tau),
  \qquad
  E(\tau):=-\pi \ii\tau-\frac{\ii}{\pi^3}\bigl(U_1(\tau)-4U_4(\tau)\bigr),
\end{equation}
on the region where his series converge, and $\mathrm{EK}=2\ReT E$.

\begin{remark}[Normalization check against \cite{Sa13}]\label{rem:samartdict}
For the reader comparing \eqref{eq:samart} with
\cite[Prop.~2.1(i)]{Sa13} (arXiv numbering): Samart's $f_2$ is our
$n_2$, that is, $f_2(k)=2\m\bigl(f+\sqrt{k}\bigr)$, the branch of
$\sqrt{k}$ being irrelevant since $\m(f+c)=\m(f-c)$; his $q$ is
$\e^{2\pi\ii\tau}$, matching our convention; and the hypothesis
$\ImT\tau\ge 1/2$ includes the boundary. Note in particular that the
shift inside the Mahler measure is $\sqrt{k}$, not $k$: the two
readings coincide at the endpoint $k=1$ but differ in general, which
is why the cut to avoid is $[0,64]$ and not $[-8,8]$
(Remark~\ref{rem:cuts}).
\end{remark}

For the CM evaluations we use the lattice-sum form of $\mathrm{EK}$
(which is in fact the form stated in \cite[Prop.~2.1]{Sa13}). For
$\tau=x+\ii y\in\Hh$ and $d\ge1$ define the double series
\begin{equation}\label{eq:Td}
  T_d(\tau):={\sum_{(m,n)\in\Zz^2}}^{\!\prime}
  \Bigl[\frac{4(dmx+n)^2}{|dm\tau+n|^6}-\frac{1}{|dm\tau+n|^4}\Bigr],
\end{equation}
summed by rows in $m$ (each row is absolutely convergent and the row sums
decay like $|m|^{-2}$, as Poisson summation shows; the $m=0$ row is
$\sum_{n\neq0}(4n^2\cdot n^{-6}-n^{-4})=6\zeta(4)=\pi^4/15$).

\begin{lemma}[Key identity]\label{lem:latticeEK}
For every $\tau\in\Hh$,
\begin{equation}\label{eq:EKlattice}
  \mathrm{EK}(\tau)=\frac{2\,\ImT\tau}{\pi^3}\Bigl(-T_1(\tau)+16T_4(\tau)\Bigr).
\end{equation}
\end{lemma}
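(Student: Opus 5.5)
The plan is to compare the two sides row by row in $m$, by Poisson summation in $n$. I would first rewrite the summand of \eqref{eq:Td} in complex form. Put $w=dm\tau+n=a+\ii b$ with $a=dmx+n$ and $b=dmy$. Then $4a^2-|w|^2=3a^2-b^2=2\ReT(w^2)+|w|^2$, and $w^2/|w|^6=1/(w\bar w^3)$. Hence
\[
  \frac{4a^2}{|w|^6}-\frac1{|w|^4}
  =2\ReT\frac{1}{w\,\bar w^{3}}+\frac{1}{|w|^{4}}=:F(w).
\]
So $T_d(\tau)=\sum_m\sum_n F(dm\tau+n)$, summed by rows.

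\textbf{The $m=0$ row.} This row is $\pi^4/15$, as already noted after \eqref{eq:Td}. Its contribution to the right-hand side of \eqref{eq:EKlattice} is
\[
  \frac{2y}{\pi^3}\Bigl(-\frac{\pi^4}{15}+16\cdot\frac{\pi^4}{15}\Bigr)=2\pi y=\ImT(2\pi\tau).
\]
This accounts exactly for the linear term of $\mathrm{EK}$. It remains to show that the $m\neq0$ rows reproduce $\ImT\frac2{\pi^3}(U_1-4U_4)$.

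\textbf{The $m\neq0$ rows.} For fixed $m\neq0$, set $b=dmy$ and apply Poisson summation to $t\mapsto F(t+dmx+\ii b)$. This gives
\[
  \sum_{n\in\Zz}F(dm\tau+n)=\sum_{k\in\Zz}\widehat F_b(k)\,\e^{2\pi\ii k\,dmx}.
\]
The transforms $\widehat F_b(k)$ are computed by residues at $t=\pm\ii b$, using $\bar w=t-\ii b$ on the real line. I expect three outcomes.
\begin{itemize}
\item The constant term $\widehat F_b(0)$ vanishes: $\int_\Rr|w|^{-4}\,\id t=\pi/(2|b|^3)$ should cancel against $2\ReT\int_\Rr w^{-1}\bar w^{-3}\,\id t$. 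This is consistent with the check that the leftover constants $(-1+16/64)\,y^{-3}$ would otherwise spoil the identity.
\item For $k\neq0$ the transform is $\e^{-2\pi|k||b|}$ times a polynomial in $|k|$ of degree at most $2$, with coefficients in powers of $|b|^{-1}$.
\item Multiplying by the prefactor $2y/\pi^3$, the top-degree piece should be proportional to $k^2\e^{-2\pi|k|dmy}/(dm)$.
\end{itemize}
On the other side, the Lipschitz formula $\sum_n(w+n)^{-3}=\frac{(-2\pi\ii)^3}{2}\sum_{k\ge1}k^2\e^{2\pi\ii kw}$ for $\ImT w>0$, together with the pairing $m\leftrightarrow-m$, expands $\ImT\,U_j(\tau)$ as $\sum_{m\ge1}\frac1m\sum_{k\ge1}k^2\,\e^{-2\pi kjmy}$ times explicit trigonometric factors in $kjmx$. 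Matching these Fourier–exponential coefficients for $d=j\in\{1,4\}$ term by term, with the weights $-1\leftrightarrow 1$ and $16\leftrightarrow-4$, would give the result. Here $16/d=4$ at $d=4$ explains the factor $4$ in front of $U_4$.

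\textbf{Main obstacle.} The residue computation also produces lower-order non-holomorphic terms, of the shape $|k|\e^{-2\pi|k||b|}/|b|$ and $\e^{-2\pi|k||b|}/|b|^2$. These come from the double and triple poles of $\bar w^{-3}$ and from $|w|^{-4}$. They have no counterpart in $\ImT U_j$, so they must cancel between the two pieces $2\ReT(w^{-1}\bar w^{-3})$ and $|w|^{-4}$ within each row. The point of the weights $4$ and $-1$ in \eqref{eq:Td} should be exactly to force this cancellation, and I would verify it first, symbolically, before assembling the sums. As a fallback, I would use the identities $\partial_{\bar w}(w^{-1}\bar w^{-2})=-2w^{-1}\bar w^{-3}$ and $|w|^{-4}=w^{-2}\bar w^{-2}$ to write $F$ as a derivative in $x$ of Eisenstein-type pieces whose row sums are classical. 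The interchange of $\sum_m$ with the Poisson expansion is justified by the exponential decay in $m$ once the constant terms are known to vanish, consistent with the stated $|m|^{-2}$ decay of the row sums.
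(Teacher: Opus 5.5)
Your outline is sound, and every outcome you predict does hold. It is, however, a much longer route than the paper's, and the ``main obstacle'' you flag disappears once you see that the summand is itself an imaginary part of a holomorphic function. With $w=a+\ii b$ one has $\ImT(w^{-3})=\ImT(\bar w^{3})/|w|^6=-b(3a^2-b^2)/|w|^6$, so
\[
  F(w)=\frac{4a^2}{|w|^6}-\frac{1}{|w|^4}=\frac{3a^2-b^2}{|w|^6}
  =-\frac{\ImT(w^{-3})}{\ImT w}.
\]
Since $\ImT(dm\tau+n)=dmy$, the factor $1/m$ in $U_d$ cancels the $m$ in this denominator term by term. Hence $\ImT U_d=-dy\sum_{m\neq0}\sum_n F(dm\tau+n)=-dy\,\bigl(T_d-\pi^4/15\bigr)$, and together with your $m=0$ computation this is \eqref{eq:EKlattice}. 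That is the paper's entire proof: an identity in each single term $(m,n)$, with no Poisson summation, residues, or Lipschitz formula.

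In your route the residue computation would only rediscover this identity. Because $F(t+\ii b)=-b^{-1}\ImT\bigl((t+\ii b)^{-3}\bigr)$, one gets $\widehat F_b(0)=0$ and $\widehat F_b(k)=-\frac{2\pi^3}{|b|}k^2\e^{-2\pi|k||b|}$ exactly for $k\neq0$. So the lower-order pieces coming from $2\ReT(w^{-1}\bar w^{-3})$ and from $|w|^{-4}$ cancel identically. As written, you defer this cancellation to a later symbolic check, so the one substantive step is postponed rather than proved; the identity above discharges it in a line. What your approach buys is the explicit Fourier expansion of each row sum, which is useful for numerical evaluation and decay estimates, but the lemma itself does not need it.
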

\begin{proof}
Write $w=jm\tau+n=u+\ii v$ with $u=jmx+n$, $v=jmy$. From
$w^{-3}-\bar w^{-3}=(\bar w^3-w^3)/|w|^6=-2\ii(3u^2v-v^3)|w|^{-6}$ and
$3u^2-v^2=4u^2-|w|^2$,
\begin{align*}
  2\ii\ImT U_j
  &=U_j-\overline{U_j}
   =\sum_{m\neq0}\frac1m\sum_{n\in\Zz}
     \bigl[(jm\tau+n)^{-3}-(jm\bar\tau+n)^{-3}\bigr]  \\
  &=\sum_{m\neq0}\frac1m\sum_{n\in\Zz}
     \bigl[-2\ii v(3u^2-v^2)\bigr]|w|^{-6}
   =-2\ii\, jy\sum_{m\neq0}\sum_{n\in\Zz}
     \Bigl[\frac{4u^2}{|w|^6}-\frac{1}{|w|^4}\Bigr],
\end{align*}
where we used $(1/m)v=jy$ (note $v=jmy$ has the sign of $m$, so the factor
$1/m$ cancels it exactly). The inner double sum is $T_j(\tau)$ minus its
$m=0$ row $\pi^4/15$; hence
\[
  \ImT U_j=-jy\Bigl(T_j(\tau)-\frac{\pi^4}{15}\Bigr).
\]
Substituting into \eqref{eq:EK},
\[
  \mathrm{EK}(\tau)
  =2\pi y+\frac{2}{\pi^3}\Bigl[-y\bigl(T_1-\tfrac{\pi^4}{15}\bigr)
   +16y\bigl(T_4-\tfrac{\pi^4}{15}\bigr)\Bigr]
  =2\pi y+\frac{2y}{\pi^3}\bigl(-T_1+16T_4\bigr)-\frac{2y}{\pi^3}\cdot\pi^4,
\]
and the last term is $2\pi y$, so they cancel, giving \eqref{eq:EKlattice}.
(The $m=0$ rows of $T_1$ and $T_4$ contribute $(-1+16)\pi^4/15=\pi^4$ to the
combination, which is exactly the polynomial term $2\pi\tau$ of Samart's
formula.)
\end{proof}

\begin{remark}[Alternative form]\label{rem:Talt}
Since $4u^2=(w+\bar w)^2=w^2+2|w|^2+\bar w^2$,
\begin{equation}\label{eq:Talt}
  T_d(\tau)={\sum_{\lambda\in\cL_d(\tau)}}^{\!\prime}
  \Bigl[2\ReT\frac{\bar\lambda^2}{|\lambda|^6}+\frac{1}{|\lambda|^4}\Bigr],
  \qquad
  \cL_d(\tau):=\Zz+\Zz\, d\tau,
\end{equation}
which is the form used for the CM evaluations.
\end{remark}

\subsection{The period integral and the differential of the Mahler measure}\label{subsec:period}

For $c\in\Cc\setminus K$ the denominator $f-c$ is continuous and
nonvanishing on $\Tt^3$, so the integral
\begin{equation}\label{eq:period}
  \Omega(c):=\int_{\Tt^3}\frac{\id\mu}{f(x,y,z)-c}
\end{equation}
converges absolutely. Moreover $\Omega$ is holomorphic on
$\Cc\setminus K$: for any compact $K_0\subset\Cc\setminus K$ one has
$|f-c|\ge\operatorname{dist}(K_0,K)>0$ uniformly for $c\in K_0$ and
$(x,y,z)\in\Tt^3$, so differentiation under the integral sign is legitimate
and gives $\Omega'(c)=\int_{\Tt^3}\id\mu/(f-c)^2$. Being defined directly
by the integral, $\Omega$ is \emph{single-valued} on $\Cc\setminus K$.

\begin{lemma}[Differential of the Mahler measure]\label{lem:period}
As real $1$-forms on $\Cc\setminus K$,
\begin{equation}\label{eq:dm}
  \id\,\m(f-c)=\ReT\bigl[-\Omega(c)\,\id c\bigr].
\end{equation}
Consequently every branch of the holomorphic Mahler measure $\mt$ of
Lemma~\ref{lem:FV} satisfies $\mt'(c)=-\Omega(c)$ on $\Cc\setminus K$: the
derivative of $\mt$ is single-valued, and any two branches differ by a
locally constant pure imaginary constant. Moreover $\Omega$ is odd:
$\Omega(-c)=-\Omega(c)$.
\end{lemma}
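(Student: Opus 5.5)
The plan is to differentiate under the integral sign in the definition of $\m(f-c)$, using that $f-c$ does not vanish on $\Tt^3$ when $c\in\Cc\setminus K$. For such $c$, the function $(x,y,z,c)\mapsto\log|f(x,y,z)-c|$ is smooth, with $c$ ranging over a neighbourhood $K_0\subset\Cc\setminus K$ and $(x,y,z)\in\Tt^3$. Locally we may write $\log|f-c|=\ReT\log(f-c)$ for a local branch of the logarithm. The $c$-derivative is then $\partial_c\log(f-c)=-1/(f-c)$ regardless of the branch, since it is holomorphic in $c$. Hence, as a real $1$-form in $c$,
\[
  \id_c\log|f-c|=\ReT\Bigl[-\frac{\id c}{f-c}\Bigr].
\]
This identity holds pointwise on $\Tt^3$ and needs only that the modulus of a holomorphic function has $\id\log|h|=\ReT(h'/h\,\id c)$. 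The $c$-derivatives are bounded by $1/\operatorname{dist}(K_0,K)$ uniformly on $\Tt^3\times K_0$, exactly as in the holomorphy argument for $\Omega$ preceding the lemma. So differentiation under the integral is legitimate, and integrating against $\id\mu$ gives \eqref{eq:dm}, because $\ReT$ commutes with integration.

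For the statement about $\mt$, I will use Lemma~\ref{lem:FV}: on any simply connected open piece of $\Cc\setminus K$, a branch of $\mt$ is holomorphic with $\ReT\mt=\m(f-c)$. The differential of the real part of a holomorphic function satisfies $\id\ReT\mt=\ReT[\mt'(c)\,\id c]$. Comparing with \eqref{eq:dm} gives
\[
  \ReT\bigl[(\mt'(c)+\Omega(c))\,\id c\bigr]=0
\]
as a real $1$-form. Evaluating on $\id c=1$ and $\id c=\ii$ kills both the real and imaginary parts of $\mt'+\Omega$, so $\mt'=-\Omega$. Because $\Omega$ is defined by an integral on $\Cc\setminus K$, $\mt'$ is single-valued. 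Two branches therefore have the same derivative and differ by a locally constant function. That constant has zero real part, since both branches have real part $\m(f-c)$, so it is pure imaginary.

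For oddness, I will apply the substitution $x\mapsto -x$, which preserves Haar measure on $\Tt^3$ and sends $f$ to $-f$. This gives
\[
  \Omega(-c)=\int\frac{\id\mu}{-f-(-c)}\cdot(-1)^{0}
\]
after the change of variables, that is, $\int\id\mu/(-f+c)=-\Omega(c)$.

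I expect no real obstacle. The one point needing care is to justify the pointwise formula for $\id\log|f-c|$ without choosing a global logarithm; working with $\log|h|=\tfrac12\log(h\bar h)$ and differentiating directly avoids any branch issue.
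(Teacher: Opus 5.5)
Your proposal is correct and is essentially the paper's proof: you differentiate under the integral sign (the paper computes $\partial_{c_1}$ and $\partial_{c_2}$ separately, while you use the pointwise identity $\id_c\log|f-c|=\ReT[-\id c/(f-c)]$, which is the same computation), compare with $\id\ReT\mt=\ReT[\mt'\,\id c]$ by testing on $\id c=1$ and $\id c=\ii$, and obtain oddness from $x\mapsto-x$. The only blemish is cosmetic: drop the stray factor $(-1)^{0}$ in your oddness display, since the substitution gives directly $\Omega(-c)=\int\id\mu/(f+c)=\int\id\mu/(c-f)=-\Omega(c)$.
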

\begin{proof}
Write $c=c_1+\ii c_2$. Since $1/(f-c)$ is continuous on $\Tt^3$ for
$c\notin K$, we may differentiate under the integral sign:
\[
  \frac{\partial\,\m(f-c)}{\partial c_1}
  =\int_{\Tt^3}\ReT\frac{-1}{f-c}\,\id\mu=-\ReT\Omega(c),\qquad
  \frac{\partial\,\m(f-c)}{\partial c_2}
  =\int_{\Tt^3}\ReT\frac{-\ii}{f-c}\,\id\mu=\ImT\Omega(c),
\]
which is \eqref{eq:dm}. If $\mt$ is a local branch as in
Lemma~\ref{lem:FV}, then $\ReT\mt=\m(f-\cdot)$, hence
$\ReT[\mt'(c)\,\id c]=\id\,\m(f-c)=\ReT[-\Omega(c)\,\id c]$; testing
$\id c=1$ and $\id c=\ii$ yields $\mt'(c)=-\Omega(c)$. For the oddness,
substitute $x\mapsto -x$ (Haar-invariant, $f\mapsto -f$):
$\Omega(-c)=\int_{\Tt^3}\id\mu/(f+c)=-\int_{\Tt^3}\id\mu/(f-c)=-\Omega(c)$.
\end{proof}

\begin{remark}
This is the precise sense in which the branch ambiguity of the holomorphic
Mahler measure is invisible at the level of differentials: the
multivaluedness of $\mt$ lives in locally constant imaginary periods,
while its derivative is the single-valued period integral
\eqref{eq:period} over the \emph{fixed} integration cycle $\Tt^3$.
\end{remark}

\section{The continuation machine}\label{sec:machine}

This section develops the propagation argument used in the proof of
Theorem A. It is stated for the family $f+\sqrt{k}$ with Samart's
parametrization $k=s_2(\tau)$; Theorem B does not use it
(see \S\ref{subsec:Bexact}).

\subsection{The propagation principle}

\begin{lemma}[Propagation lemma]\label{lem:propagation}
Let $U\subset\Cc$ be a connected open set, let $Z\subset U$ be a relatively
closed subset, and let $H\colon U\to\Rr$ be continuous. Suppose that $H$ is
locally constant on $U\setminus Z$. Then $H$ is constant on every connected
component $W$ of $U\setminus Z$. Moreover, if $H\equiv C$ on $W$, then
$H\equiv C$ on $\overline{W}\cap U$.
\end{lemma}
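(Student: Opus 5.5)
The first assertion follows from a standard clopen argument applied to $H$ on the component $W$; the second then follows from continuity of $H$ on $U$.

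\textbf{Step 1: constancy on $W$.} Since $Z$ is relatively closed in $U$ and $U$ is open in $\Cc$, the set $U\setminus Z$ is open. Its connected components are therefore open and connected, because $\Cc$ is locally connected. Fix a component $W$ and a point $w_0\in W$, and set $C:=H(w_0)$. Let $A:=\{w\in W: H(w)=C\}$.
\begin{itemize}
\item $A$ is nonempty, since it contains $w_0$.
\item $A$ is relatively closed in $W$, because $H$ is continuous.
\item $A$ is open: if $w\in A$, local constancy gives a neighborhood $N\subset U\setminus Z$ of $w$ on which $H\equiv C$, and $N\cap W$ is a neighborhood of $w$ in $W$ contained in $A$.
\end{itemize}
Connectedness of $W$ then forces $A=W$.

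\textbf{Step 2: extension to $\overline W\cap U$.} Take $p\in\overline W\cap U$. Choose a sequence $w_j\in W$ with $w_j\to p$; this is possible in the metric space $\Cc$. Since $p\in U$ and $H$ is continuous on $U$, we get $H(p)=\lim_j H(w_j)=C$.

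No step is a genuine obstacle; the only points requiring care are the following.
\begin{itemize}
\item The components of $U\setminus Z$ must be open, which is where the openness of $U\setminus Z$ and local connectedness are used.
\item Continuity must be taken on all of $U$, not merely on $U\setminus Z$, since the limit point $p$ may lie in $Z$.
\end{itemize}
This second point is exactly the role the lemma plays later: $H$ will be the difference between $n_2\circ s_2$ (continuous by Lemma~\ref{lem:continuity}) and $\mathrm{EK}$ (real-analytic on $\Hh$), and the target point $\tau_0$ lies on the boundary of a good component, possibly in $Z$.

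The connectedness of $U$ is not actually needed for the argument.
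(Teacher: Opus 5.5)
Your proof is correct and follows the paper's argument: Step 1 spells out the clopen argument behind the paper's one-line remark that a locally constant function is constant on each connected component, and Step 2 is the paper's sequential-continuity argument. One small aside: the openness of $W$ is not actually needed, because your clopen argument already works in the relative topology of $W$.
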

\begin{proof}
A locally constant function is constant on each connected component. For
the second assertion, let $p\in\overline{W}\cap U$ and choose $p_n\in W$
with $p_n\to p$; continuity gives $H(p)=\lim_n H(p_n)=C$.
\end{proof}

\begin{remark}\label{rem:devil}
The conclusion cannot be upgraded to ``$H$ is constant on $U$'' in general:
the devil's staircase is continuous on $[0,1]$, locally constant on the
dense open complement of the Cantor set, yet non-constant. The membership
of the target point in $\overline{W}$ is therefore a genuine topological
input; in our application it is supplied by the certified path of
Proposition~\ref{prop:path}, and it does not come for free from the lemma.
\end{remark}

\subsection{The differential identity}\label{subsec:diffid}

Put
\[
  V:=\bigl\{\tau\in\Hh:\ s_2(\tau)\notin[0,64]\bigr\},
\]
open since $s_2$ is holomorphic. On $V$, the function
\[
  F(\tau):=n_2\bigl(s_2(\tau)\bigr)=2\m\bigl(f+\sqrt{s_2(\tau)}\bigr)
\]
is well defined (Lemma~\ref{lem:continuity}) and real-analytic: locally
$F(\tau)=2\m(f-c(\tau))$ for a holomorphic branch
$c(\tau)=\sqrt{s_2(\tau)}$, which exists since $s_2\neq0$ on $V$, and
$c(\tau)\notin[-8,8]$ because $c^2=s_2(\tau)\notin[0,64]$
(Remark~\ref{rem:cuts}); the value is independent of the branch since
$\m(f+c)=\m(f-c)$. Also put
\[
  G(\tau):=\mathrm{EK}(\tau)=2\ReT E(\tau),
\]
real-analytic on all of $\Hh$. By Lemma~\ref{lem:continuity}, $F$ is
continuous on all of $\Hh$.

\begin{lemma}[An open set in Samart's region]\label{lem:disk}
The disk $D:=\{\tau:|\tau-\ii|<\tfrac1{10}\}$ is contained in
$V\cap\{\ImT\tau>\tfrac12\}$: indeed $|s_2(\tau)|>261$ on $D$.
\end{lemma}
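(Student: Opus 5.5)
The plan is to bound $|s_2|$ from below on $D$ directly from the product expansion of Lemma~\ref{lem:s2prod}. Both assertions then follow at once. First, every $\tau\in D$ has $\ImT\tau>1-\tfrac1{10}=\tfrac{9}{10}>\tfrac12$. Second, once $|s_2(\tau)|>261$ is shown, $s_2(\tau)$ cannot lie in $[0,64]$, so $\tau\in V$. The whole content is therefore the bound $|s_2|>261$.

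Write $r:=|q|=\e^{-2\pi\ImT\tau}$. On $D$ we have $\ImT\tau\in(0.9,1.1)$, so
\[
  r< r_0:=\e^{-1.8\pi}\approx 3.50\times10^{-3}.
\]
By \eqref{eq:s2prod} and $|1+w|\ge 1-|w|$,
\[
  |s_2(\tau)|=r^{-1}\prod_{n\ge1}\bigl|1+q^{2n-1}\bigr|^{24}
  \;\ge\; r^{-1}\prod_{n\ge1}\bigl(1-r^{2n-1}\bigr)^{24}.
\]
The right-hand side is decreasing in $r$ on $(0,1)$, so it is bounded below by its value at $r_0$. To control the infinite product I will use $\log(1-t)\ge -t/(1-t)$ for $0\le t<1$. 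This gives
\[
  \sum_{n\ge1}\log\bigl(1-r_0^{2n-1}\bigr)\ \ge\ -\frac{1}{1-r_0}\cdot\frac{r_0}{1-r_0^{2}}.
\]
Hence the product is at least $\exp\!\bigl(-24 r_0/((1-r_0)(1-r_0^2))\bigr)\approx \e^{-0.0843}\approx0.919$.

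Combining, $|s_2(\tau)|\ge r_0^{-1}\cdot 0.919\approx 285.6\cdot0.919\approx 262.5>261$.

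The main obstacle is that the margin is thin: about $1.5$ out of $261$. The numerical constants $\e^{1.8\pi}$ and the exponential factor therefore have to be evaluated rigorously rather than by eye. I would use elementary rational bounds, namely $\pi>3.14159$ and truncated Taylor bounds for $\exp$; alternatively the two scalar inequalities can be checked by interval arithmetic in the style of the certification scripts of Section~\ref{sec:cert}. The estimate is crude in two places: it uses $|1+w|\ge1-|w|$ everywhere, and it lets $\ImT\tau$ sit at its minimum $0.9$. If the certified margin nonetheless failed, I would refine by keeping the first factor exactly, $|1+q|^{24}\ge(1-r_0)^{24}$, and bounding only the tail $n\ge2$, which is of size $O(r_0^3)$. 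That recovers essentially the full $285.6\,(1-r_0)^{24}\approx 262.6$.
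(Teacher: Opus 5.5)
Your proposal is correct and is essentially the paper's argument: both use $\ImT\tau>9/10$ on $D$, the product expansion \eqref{eq:s2prod}, and $|1+w|\ge1-|w|$. The only difference is that the paper bounds the infinite product by $\bigl(1-\sum_{n}r^{2n-1}\bigr)^{24}=\bigl(1-r/(1-r^2)\bigr)^{24}$ where you use $\log(1-t)\ge -t/(1-t)$, and both give roughly $262.6>261$ (your care with constants is warranted: $\e^{-9\pi/5}\approx3.5004\times10^{-3}$ slightly exceeds $0.0035$ because $\e^{9\pi/5}\approx285.678<2000/7$, so the paper's intermediate claim $r<0.0035$ is off by a hair, though its final inequality still holds with the true $r_0$).
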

\begin{proof}
On $D$ we have $\ImT\tau\ge9/10$, hence $r:=|q|\le\e^{-9\pi/5}<0.0035$ and
\[
  |s_2(\tau)|
  =r^{-1}\prod_{n\ge1}|1+q^{2n-1}|^{24}
  \ge r^{-1}\Bigl(1-\sum_{n\ge1}r^{2n-1}\Bigr)^{24}
  =r^{-1}\Bigl(1-\frac{r}{1-r^2}\Bigr)^{24},
\]
which is $\ge \e^{9\pi/5}\,(0.9965)^{24}>261>64$,
so $s_2(\tau)\notin[0,64]$.
\end{proof}

Let $W$ be the connected component of $V$ containing $D$.

\begin{proposition}[Differential identity]\label{prop:diffid}
There is a well-defined holomorphic function $\Psi$ on $V$ such that
\begin{equation}\label{eq:dFG}
  \id F=2\ReT\bigl[-\Psi(\tau)\,\id\tau\bigr]+\id G
  \qquad\text{on }V,
\end{equation}
namely $\Psi(\tau)=\Omega\bigl(c(\tau)\bigr)c'(\tau)+E'(\tau)$ for any
local holomorphic branch $c(\tau)=\sqrt{s_2(\tau)}$, the expression being
independent of the branch. Moreover $\Psi$ vanishes identically on $W$;
equivalently $\id F=\id G$ on $W$.
\end{proposition}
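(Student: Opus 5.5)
The proof has three steps: compute $\id F$ by the chain rule from Lemma~\ref{lem:period}, check that $\Psi$ does not depend on the branch of $\sqrt{s_2}$, and show $\Psi=0$ on the disk $D$ of Lemma~\ref{lem:disk} using Samart's theorem, so that $\Psi\equiv0$ on $W$ by the identity theorem.

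\textbf{Computing $\id F$.} Near any point of $V$ choose a local holomorphic branch $c(\tau)=\sqrt{s_2(\tau)}$; it exists because $s_2\neq0$ on $V$, and $c(\tau)\notin K$ (Remark~\ref{rem:cuts}). Then $F(\tau)=2\m(f-c(\tau))$, up to the harmless sign symmetry $\m(f+c)=\m(f-c)$. Lemma~\ref{lem:period} and the chain rule give
\[
  \id F=2\ReT\bigl[-\Omega(c(\tau))\,c'(\tau)\,\id\tau\bigr].
\]
Since $G=2\ReT E$ with $E$ holomorphic on $\Hh$, we have $\id G=2\ReT[E'(\tau)\,\id\tau]$. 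Subtracting and adding $\id G$ yields \eqref{eq:dFG} with $\Psi=\Omega(c)c'+E'$. Holomorphy of $\Psi$ follows because $\Omega$ is holomorphic on $\Cc\setminus K$, $c$ is holomorphic, and $E'$ is holomorphic on $\Hh$. For the last point one checks that the series \eqref{eq:Uj} converge locally uniformly with all their derivatives; the stated absolute-convergence bound $O(\sum_m |m|^{-3}y^{-2})$ gives this locally uniformly in $\tau$.

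\textbf{Branch independence.} Replacing $c$ by $-c$ changes $c'$ to $-c'$ and $\Omega(c)$ to $\Omega(-c)=-\Omega(c)$, by the oddness in Lemma~\ref{lem:period}. The product $\Omega(c)c'$ is therefore unchanged. The local definitions thus glue to a single-valued holomorphic function on $V$.

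\textbf{Vanishing on $D$.} By Lemma~\ref{lem:disk}, $D\subset V\cap\{\ImT\tau>1/2\}$, so Theorem~\ref{thm:samart} gives $F=G$ on the open set $D$. Hence $\id F=\id G$ there, and \eqref{eq:dFG} gives $\ReT[\Psi(\tau)\,\id\tau]=0$ on $D$. Testing on the tangent vectors $\id\tau=1$ and $\id\tau=\ii$ gives $\ReT\Psi=0$ and $\ImT\Psi=0$, so $\Psi=0$ on $D$. Since $W$ is the connected component of $V$ containing $D$, $W$ is connected and open, and the identity theorem gives $\Psi\equiv0$ on $W$. By \eqref{eq:dFG} this is equivalent to $\id F=\id G$ on $W$.

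The main care point is that the relation $\mt'=-\Omega$ is not needed here. Only the real identity $F=G$ on an open set is used, which is exactly what Samart's theorem provides. This avoids any dependence on which sheet of $\mt$ his holomorphic formula \eqref{eq:mtilde} lives on.
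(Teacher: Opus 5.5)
Your proposal is correct and follows the paper's proof essentially step for step: branch independence from the oddness of $\Omega$, the chain rule applied to \eqref{eq:dm} together with $\id G=2\ReT[E'(\tau)\,\id\tau]$, and vanishing of $\Psi$ on the disk $D$ via Samart's theorem, extended to the connected open set $W$ by the identity theorem. You also justify the holomorphy of $E$ through locally uniform convergence of the series $U_j$, a point the paper takes for granted. As you observe, neither argument uses the relation $\mt'=-\Omega$.
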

\begin{proof}
Branch independence: the other branch is $-c$, and by the oddness of
$\Omega$ (Lemma~\ref{lem:period}),
$\Omega(-c)(-c)'=(-\Omega(c))(-c')=\Omega(c)c'$. Hence $\Psi$ is a
well-defined holomorphic function on $V$. The chain rule applied to
\eqref{eq:dm} gives, locally,
$\id F=2\ReT\bigl[-\Omega(c(\tau))c'(\tau)\,\id\tau\bigr]$, and
$\id G=2\ReT[E'(\tau)\,\id\tau]$, which is \eqref{eq:dFG}.

On the nonempty open set $D\subset W\cap\{\ImT\tau>1/2\}$
(Lemma~\ref{lem:disk}), Theorem~\ref{thm:samart} gives $F=G$, hence
$\id F=\id G$, i.e.\ $\ReT[\Psi(\tau)\,\id\tau]=0$ for all directions
$\id\tau$; testing $\id\tau=1$ and $\id\tau=\ii$ gives $\Psi=0$ on $D$.
Both $\Omega\circ c\cdot c'$ and $E'$ are holomorphic on the connected
open set $W$, so the identity theorem gives $\Psi\equiv0$ on $W$.
\end{proof}

\subsection{The certified-path criterion}\label{subsec:criterion}

\begin{proposition}[Path criterion]\label{prop:criterion}
Let $\tau_*\in\Hh$ and suppose there is a continuous path
$\gamma\colon[0,1]\to\Hh$ with $\gamma(0)\in W$,
$\gamma\bigl([0,1)\bigr)\subset V$, and $\gamma(1)=\tau_*$. Then
$\tau_*\in\overline{W}$ and
\[
  n_2\bigl(s_2(\tau_*)\bigr)=\mathrm{EK}(\tau_*).
\]
\end{proposition}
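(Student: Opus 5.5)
The plan has two steps: first show that $\gamma([0,1))\subset W$, and then pass to the endpoint using the propagation lemma together with continuity.

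\textbf{Step 1: the path stays in $W$.} The set $\gamma([0,1))$ is the continuous image of the connected interval $[0,1)$, so it is connected. By hypothesis it lies in $V$ and contains $\gamma(0)\in W$. Since $W$ is the connected component of $V$ containing $D$, the whole set $\gamma([0,1))$ lies in $W$. Choose $t_n\uparrow1$. Then $\gamma(t_n)\in W$ and $\gamma(t_n)\to\gamma(1)=\tau_*$, so $\tau_*\in\overline W$.

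\textbf{Step 2: $F=G$ on $W$.} Set $H:=F-G$ on $U:=\Hh$, which we view as a connected open subset of $\Cc$, and let $Z:=\Hh\setminus V=s_2^{-1}([0,64])$. The set $Z$ is relatively closed because $s_2$ is continuous.

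The function $H$ is continuous on $\Hh$. Indeed, $F$ is continuous on $\Hh$ by Lemma~\ref{lem:continuity} (it is $n_2\circ s_2$ with $s_2$ holomorphic), and $G=\mathrm{EK}$ is real-analytic on $\Hh$.

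By Proposition~\ref{prop:diffid} we have $\id F=\id G$ on $W$. Hence $H$ is locally constant on $W$, and therefore constant on $W$ because $W$ is connected.

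The propagation lemma~\ref{lem:propagation} literally asks for local constancy on all of $U\setminus Z=V$, whereas we only know it on $W$. This is harmless: the lemma's argument only uses constancy on $W$ and continuity at points of $\overline W\cap U$. Alternatively, one may apply the lemma with $U$ replaced by a connected open neighbourhood of $\overline W$; I would simply repeat the two-line argument directly.

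Next, evaluate the constant on the disk $D$. Since $D\subset\{\ImT\tau>1/2\}$ by Lemma~\ref{lem:disk}, Theorem~\ref{thm:samart} gives $F=G$ on $D$. Hence $H\equiv0$ on $W$.

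\textbf{Conclusion.} By continuity of $H$ at $\tau_*\in\overline W\cap\Hh$, we get $H(\tau_*)=\lim_n H(\gamma(t_n))=0$, that is, $n_2(s_2(\tau_*))=\mathrm{EK}(\tau_*)$.

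The only delicate point is continuity of $F$ at $\tau_*$, which may lie in $Z$ (that is, inside the critical locus). There $F$ is not real-analytic, so the argument must rely on the continuity statement of Lemma~\ref{lem:continuity} rather than on any analytic continuation. Everything else is topological bookkeeping.
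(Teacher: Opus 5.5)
Your proposal is correct and follows essentially the same route as the paper: connectedness of $\gamma([0,1))$ puts the path in $W$ (so $\tau_*\in\overline W$), $\id F=\id G$ on $W$ together with Samart's identity on $D$ gives $F\equiv G$ on $W$, and continuity of $F$ (Lemma~\ref{lem:continuity}) and of $G$ carries the identity to $\tau_*$. You are also right that Lemma~\ref{lem:propagation} is not literally applicable, since local constancy is only known on $W$ rather than on all of $V$; the paper's proof likewise argues directly and does not invoke that lemma.
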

\begin{proof}
Let $H:=F-G$. By Lemma~\ref{lem:continuity} and the real-analyticity of
$G$, $H$ is continuous on all of $\Hh$; by Proposition~\ref{prop:diffid},
$\id H=0$ on $W$, so $H$ is locally constant on $W$, hence constant on the
connected set $W$; on the nonempty open set $D\subset W$,
Theorem~\ref{thm:samart} gives $H=0$, so $H\equiv0$ on $W$. The set
$\gamma\bigl([0,1)\bigr)$ is a connected subset of $V$ containing
$\gamma(0)\in W$; a connected subset of $V$ meeting the component $W$ lies
entirely in $W$, so $\gamma(t)\in W$ for $t<1$ and
$\tau_*=\lim_{t\to1}\gamma(t)\in\overline W$. Since $H\equiv0$ on $W$
and $H$ is continuous at $\tau_*$,
$H(\tau_*)=\lim_{t\to1}H\bigl(\gamma(t)\bigr)=0$.
\end{proof}

The topological hypothesis $\gamma\bigl([0,1)\bigr)\subset V$ is certified
by interval arithmetic: the path is covered by finitely many parameter
blocks, each carrying a strict interval enclosure of $s_2$ certifiably
disjoint from $[0,64]$ (Section~\ref{sec:cert}). The endpoint $\tau_*$,
where $s_2(\tau_*)$ may lie on the cut $[0,64]$, is absorbed by the
continuity of Mahler measure --- this is exactly where the genuine Mahler
measure (as opposed to $\ReT\mt$) enters.

\section{Proof of Theorem A}\label{sec:thmA}

Throughout this section
\[
  \tau_0:=\frac{3+\sqrt{-7}}8,\qquad
  4\tau_0^2-3\tau_0+1=0,\qquad
  \ImT\tau_0=\frac{\sqrt7}{8}.
\]

\subsection{CM evaluation of \texorpdfstring{$\mathrm{EK}(\tau_0)$}{EK(tau0)}}\label{subsec:P1A}

Set
\[
  \beta:=4\tau_0=\frac{3+\sqrt{-7}}2=1+\varpi .
\]
Then $N(\beta)=|\beta|^2=4$, and $\beta^2=3\beta-4$ (from the quadratic
equation for $\tau_0$), whence $\beta\varpi=\beta(\beta-1)=2\beta-4$.

\begin{lemma}\label{lem:latticesA}
$\cL_4(\tau_0)=\OO_K$ and $\cL_1(\tau_0)=(\beta)/4$, where
$(\beta)=\beta\OO_K$ is the principal ideal of norm $4$.
\end{lemma}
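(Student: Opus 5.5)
We verify both lattice identities by exhibiting explicit $\Zz$-bases, using only the relations \eqref{eq:pirels} and the arithmetic of $\beta$ already recorded above.

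\textbf{The lattice $\cL_4(\tau_0)$.} By definition $\cL_4(\tau_0)=\Zz+\Zz\cdot4\tau_0=\Zz+\Zz\beta$. Since $\beta=1+\varpi$, the change of basis $(1,\beta)\mapsto(1,\beta-1)=(1,\varpi)$ is unimodular. Therefore
\[
  \Zz+\Zz\beta=\Zz+\Zz\varpi=\Zz[\varpi]=\OO_K,
\]
which proves the first identity.

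\textbf{The lattice $\cL_1(\tau_0)$.} Here $\cL_1(\tau_0)=\Zz+\Zz\tau_0=\tfrac14(\Zz\cdot4+\Zz\beta)$. It therefore suffices to show
\[
  4\Zz+\beta\Zz=\beta\OO_K .
\]

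First we show $4\Zz+\beta\Zz\subseteq\beta\OO_K$. Clearly $\beta\in\beta\OO_K$. For $4$, note that $N(\beta)=\beta\bar\beta=4$, so $4=\beta\bar\beta\in\beta\OO_K$.

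Next we show $\beta\OO_K\subseteq4\Zz+\beta\Zz$. Since $\OO_K=\Zz+\Zz\varpi$, the ideal $\beta\OO_K$ is generated over $\Zz$ by $\beta$ and $\beta\varpi$. By the relation $\beta\varpi=2\beta-4$ noted above, $\beta\varpi$ lies in $4\Zz+\beta\Zz$.

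Hence the two lattices are equal and $\cL_1(\tau_0)=(\beta)/4$. As a consistency check, $4\Zz+\beta\Zz$ has index $\lvert\det\begin{pmatrix}4&0\\1&1\end{pmatrix}\rvert=4$ in $\Zz+\Zz\varpi$, since $\beta=1+\varpi$. This matches $N((\beta))=4$.

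The only point requiring care is using $\bar\beta\in\OO_K$ to place $4$ in the ideal. This holds because $\bar\beta=1+\bar\varpi=2-\varpi\in\OO_K$. The rest is bookkeeping of bases.
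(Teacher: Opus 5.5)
Your proof is correct and follows the paper's argument. You identify $\cL_4(\tau_0)=\Zz+\Zz\beta$ with $\Zz+\Zz\varpi=\OO_K$ via $\beta=1+\varpi$, and you use $\beta\varpi=2\beta-4$ to match the $\Zz$-basis $\{\beta,\beta\varpi\}$ of $(\beta)$ with $\{\beta,4\}$. The only difference is that the paper treats this as a single unimodular change of basis, so $4=2\beta-\beta\varpi\in(\beta)$ comes for free, whereas you prove two inclusions and obtain $4\in(\beta)$ separately from $\beta\bar\beta=4$.
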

\begin{proof}
$\cL_4(\tau_0)=\Zz+\Zz\beta=\Zz+\Zz(1+\varpi)=\Zz+\Zz\varpi=\OO_K$.
For the second,
\[
  (\beta)=\{\beta(m+n\varpi):m,n\in\Zz\}
  =\{m\beta+n(2\beta-4)\}=\{a\beta+4b:a,b\in\Zz\}=4\Zz+\Zz\beta,
\]
so $(\beta)/4=\Zz+\Zz(\beta/4)=\Zz+\Zz\tau_0=\cL_1(\tau_0)$.
\end{proof}

\begin{proposition}\label{prop:TevalA}
With $L_3:=L(g_7,3)$,
\begin{equation}\label{eq:TevalA}
  T_4(\tau_0)=4L_3+2\zeta_K(2),\qquad
  T_1(\tau_0)=8L_3+32\zeta_K(2).
\end{equation}
\end{proposition}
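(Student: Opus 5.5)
The plan is to evaluate both lattice sums directly from the alternative form \eqref{eq:Talt}, using Lemma~\ref{lem:latticesA} to identify the lattices and Proposition~\ref{prop:theta} (equations \eqref{eq:hecke} and \eqref{eq:abs2} at $s=3$ and $s=2$) to recognise the resulting Hecke sums.

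\textbf{Convergence.} I would first note that each summand in \eqref{eq:Talt} is $O(|\lambda|^{-4})$ in absolute value. Since a rank-two lattice satisfies $\sum'|\lambda|^{-4}<\infty$, the double series defining $T_d$ converges absolutely. The row-by-row summation prescribed in \eqref{eq:Td} therefore agrees with any rearrangement. This justifies reindexing the sums over $\OO_K$ and splitting them termwise.

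\textbf{The sum $T_4(\tau_0)$.} By Lemma~\ref{lem:latticesA}, $\cL_4(\tau_0)=\OO_K$, so
\[
  T_4(\tau_0)=2\ReT{\sum_{\alpha\in\OO_K}}'\frac{\bar\alpha^2}{|\alpha|^{6}}+{\sum_{\alpha\in\OO_K}}'\frac1{|\alpha|^4}.
\]
By \eqref{eq:hecke} with $s=3$, the first sum equals $2L_3$, which is real. By \eqref{eq:abs2}, the second sum equals $2\zeta_K(2)$. Together these give $T_4(\tau_0)=4L_3+2\zeta_K(2)$.

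\textbf{The sum $T_1(\tau_0)$.} By Lemma~\ref{lem:latticesA}, $\cL_1(\tau_0)=(\beta)/4$, and since $\beta\neq0$ the map $\alpha\mapsto\lambda=\beta\alpha/4$ is a bijection $\OO_K\setminus\{0\}\to\cL_1(\tau_0)\setminus\{0\}$. Using $|\beta|^2=4$ we get $|\lambda|^2=|\alpha|^2/4$. Hence
\[
  \frac{\bar\lambda^2}{|\lambda|^6}=\frac{\bar\beta^2\bar\alpha^2/16}{|\alpha|^6/64}=4\bar\beta^2\frac{\bar\alpha^2}{|\alpha|^6},
  \qquad
  \frac1{|\lambda|^4}=\frac{16}{|\alpha|^4}.
\]
Summing with \eqref{eq:hecke} and \eqref{eq:abs2} gives
\[
  T_1(\tau_0)=2\ReT\bigl(4\bar\beta^2\cdot2L_3\bigr)+16\cdot2\zeta_K(2)=16\,\ReT(\bar\beta^2)\,L_3+32\zeta_K(2).
\]
Finally, $\beta^2=3\beta-4=(1+3\sqrt{-7})/2$, so $\ReT\bar\beta^2=1/2$, which yields $T_1(\tau_0)=8L_3+32\zeta_K(2)$.

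The computation is mechanical once the lattices are identified. The only point needing care is the convergence justification above, which licenses moving from the ordered summation of \eqref{eq:Td} to unordered sums over $\OO_K$ and applying \eqref{eq:hecke} termwise.
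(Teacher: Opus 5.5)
Your proposal is correct and follows essentially the paper's proof. You identify $\cL_4(\tau_0)=\OO_K$ and $\cL_1(\tau_0)=(\beta)/4$, then reduce both sums to \eqref{eq:hecke} at $s=3$ and \eqref{eq:abs2} via the rescaling $\lambda=\beta\alpha/4$, using $|\beta|^2=4$ and $\ReT\bar\beta^2=1/2$; the paper does the same rescaling in two steps, and your absolute-convergence remark (summands are $O(|\lambda|^{-4})$) justifies the reordering from \eqref{eq:Td} that the paper leaves implicit.
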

\begin{proof}
For $\cL_4(\tau_0)=\OO_K$, the alternative form \eqref{eq:Talt} and
Proposition~\ref{prop:theta} (equations \eqref{eq:hecke} at $s=3$ and
\eqref{eq:abs2}) give immediately
\[
  T_4(\tau_0)
  =2\ReT{\sum_{\alpha\in\OO_K}}^{\!\prime}\frac{\bar\alpha^2}{|\alpha|^6}
   +{\sum_{\alpha\in\OO_K}}^{\!\prime}\frac1{|\alpha|^4}
  =4L_3+2\zeta_K(2).
\]
For $\cL_1(\tau_0)=(\beta)/4$, rescale $\lambda=\alpha/4$,
$\alpha\in(\beta)\setminus\{0\}$: then
$\bar\lambda^2/|\lambda|^6=256\,\bar\alpha^2/|\alpha|^6$ and
$|\lambda|^{-4}=256|\alpha|^{-4}$. Writing $\alpha=\beta\gamma$,
$\gamma\in\OO_K\setminus\{0\}$,
\[
  {\sum_{\alpha\in(\beta)}}^{\!\prime}|\alpha|^{-4}
  =|\beta|^{-4}{\sum_{\gamma\in\OO_K}}^{\!\prime}|\gamma|^{-4}
  =\tfrac1{16}\cdot 2\zeta_K(2),
\]
\[
  {\sum_{\alpha\in(\beta)}}^{\!\prime}\frac{\bar\alpha^2}{|\alpha|^6}
  =\frac{\bar\beta^2}{|\beta|^6}
   {\sum_{\gamma\in\OO_K}}^{\!\prime}\frac{\bar\gamma^2}{|\gamma|^6}
  =\frac{\bar\beta^2}{64}\cdot 2L_3,
\]
since $|\beta|^6=(|\beta|^2)^3=64$. Now
$\bar\beta^2=\bigl((3-\sqrt{-7})/2\bigr)^2=(1-3\sqrt{-7})/2$, so
$\ReT\bar\beta^2=1/2$. Therefore
\[
  T_1(\tau_0)
  =256\Bigl[2\cdot\tfrac12\cdot\tfrac{2L_3}{64}
   +\tfrac1{16}\cdot 2\zeta_K(2)\Bigr]
  =8L_3+32\zeta_K(2). \qedhere
\]
\end{proof}

\begin{theorem}[CM evaluation at $\tau_0$]\label{thm:P1A}
\[
  \mathrm{EK}(\tau_0)=\frac{14\sqrt7}{\pi^3}\,L(g_7,3)=8M_7.
\]
\end{theorem}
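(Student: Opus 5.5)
The plan is to combine the key identity of Lemma~\ref{lem:latticeEK} with the two CM evaluations of Proposition~\ref{prop:TevalA}, and then convert the result with the functional equation of Lemma~\ref{lem:FE}(i). No analytic input is needed beyond these. The key identity holds for every $\tau\in\Hh$, so in particular at $\tau_0$ even though $\ImT\tau_0<1/2$.

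\textbf{Step 1.} Substitute \eqref{eq:TevalA} into the combination $-T_1+16T_4$:
\[
  -T_1(\tau_0)+16T_4(\tau_0)
  =-(8L_3+32\zeta_K(2))+16(4L_3+2\zeta_K(2))
  =56L_3 .
\]
The $\zeta_K(2)$-terms cancel exactly ($-32+32=0$), which is the cancellation of parasitic terms announced in \S\ref{subsec:methods}.

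\textbf{Step 2.} Apply \eqref{eq:EKlattice} with $\ImT\tau_0=\sqrt7/8$:
\[
  \mathrm{EK}(\tau_0)=\frac{2}{\pi^3}\cdot\frac{\sqrt7}{8}\cdot56L_3=\frac{14\sqrt7}{\pi^3}L(g_7,3).
\]

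\textbf{Step 3.} By \eqref{eq:FE}, $M_7=\frac{7\sqrt7}{4\pi^3}L(g_7,3)$, so $8M_7=\frac{14\sqrt7}{\pi^3}L(g_7,3)$. This gives both equalities in the statement.

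The only point needing care is the legitimacy of using the lattice form \eqref{eq:Talt} in Proposition~\ref{prop:TevalA}. The sums $T_d$ are defined by row summation, and the terms $\bar\lambda^2/|\lambda|^6$ form only a conditionally convergent series (of weight $4$ decay). One must therefore check that row-wise summation over $\cL_d(\tau_0)$ matches the value $2L_3$ obtained from the Hecke series \eqref{eq:hecke} at $s=3$, which lies on the boundary $\ReT s>2$ of absolute convergence.

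I would handle this by noting two facts. First, the $|\lambda|^{-4}$ part converges absolutely. Second, for the $\bar\lambda^2$ part, the Hecke sum $\sum'\bar\alpha^2|\alpha|^{-2s}$ continues holomorphically to $s=3$ with value $2L(g_7,3)$, the Dirichlet series of $g_7$ converging absolutely there since $a_n=O(n^{1+\epsilon})$. One then checks that the row-ordered sum agrees with this value, because each row is absolutely convergent and the row sums decay like $|m|^{-2}$, as already noted after \eqref{eq:Td}. A possible Eisenstein-type correction term of the form $\pi/\ImT$ arises only at weight $2$, not at weight $3$, so no such term should appear here. This justification is the main obstacle. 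Everything else is arithmetic, and I would also confirm the final identity numerically.
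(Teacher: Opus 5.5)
Your argument is correct and is exactly the paper's proof: substitute Proposition~\ref{prop:TevalA} into Lemma~\ref{lem:latticeEK}, use the exact cancellation of the $\zeta_K(2)$-terms, and convert with \eqref{eq:FE}. The convergence issue you call the main obstacle is not one, because each term $\bar\lambda^2/|\lambda|^6$ has modulus $|\lambda|^{-4}$, which is absolutely summable over a rank-two lattice; so $s=3$ lies strictly inside the half-plane $\ReT s>2$ of absolute convergence of \eqref{eq:hecke} rather than on its boundary, and no summation-order question or Eisenstein-type correction can arise.
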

\begin{proof}
Combining \eqref{eq:TevalA},
\[
  -T_1(\tau_0)+16T_4(\tau_0)=(-8+64)L_3+(-32+32)\zeta_K(2)=56L_3:
\]
the $\zeta_K(2)$-terms cancel \emph{exactly} (the algebraic heart of the
evaluation; the coefficients $1/2=\ReT\bar\beta^2$ and $16=|\beta|^4$ are
responsible). By Lemma~\ref{lem:latticeEK} with $\ImT\tau_0=\sqrt7/8$,
\[
  \mathrm{EK}(\tau_0)=\frac{2\cdot\sqrt7/8}{\pi^3}\cdot 56L_3
  =\frac{14\sqrt7}{\pi^3}L_3
  =8\cdot\frac{7\sqrt7}{4\pi^3}L_3=8M_7,
\]
the last equality being \eqref{eq:FE}.
\end{proof}

\begin{remark}[Numerical confirmation]
The series $T_1(\tau_0)$, $T_4(\tau_0)$ computed directly by Poisson-summed
rows (\texttt{verify\_P1.py}, 60 dps) agree with \eqref{eq:TevalA} to 50
digits, and Theorem~\ref{thm:P1A} holds numerically to $5.6\times10^{-51}$:
\[
  T_1(\tau_0)=66.1354356158220540671143350434478014047047794968037983919065\ldots
\]
\[
  T_4(\tau_0)=6.5399375223477356279582944849702366138990879925962586125874\ldots
\]
\end{remark}

\subsection{Exactness of \texorpdfstring{$s_2(\tau_0)=1$}{s2(tau0)=1}}\label{subsec:exactA}

\begin{lemma}\label{lem:jvalA}
$j(2\tau_0)=-3375$ exactly.
\end{lemma}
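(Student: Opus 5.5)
We need $j(2\tau_0)=-3375$ where $2\tau_0=(3+\sqrt{-7})/4$. The plan is to show that $2\tau_0$ is $\mathrm{SL}_2(\Zz)$-equivalent to $\varpi=(1+\sqrt{-7})/2$, a generator of $\OO_K$, and then quote the classical singular modulus $j(\varpi)=-3375=-15^3$ for discriminant $-7$.

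\textbf{Step 1 (quadratic form).} From $4\tau_0^2-3\tau_0+1=0$, put $z=2\tau_0$. Substituting gives $z^2-\tfrac32 z+1=0$, i.e. $2z^2-3z+2=0$. So $z$ is the root in $\Hh$ of the primitive form $(2,-3,2)$, whose discriminant is $9-16=-7$.

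\textbf{Step 2 (reduction).} The class number is $h(-7)=1$, so every primitive positive definite form of discriminant $-7$ is equivalent to the reduced form $(1,1,2)$. The root of $(1,1,2)$ in $\Hh$ is $(-1+\sqrt{-7})/2=\varpi-1$. Rather than rely only on the class-number argument, I would exhibit the matrix explicitly. Since $z=(3+\sqrt{-7})/4$, we have
\[
  -\frac1z=-\frac{4}{3+\sqrt{-7}}=-\frac{4(3-\sqrt{-7})}{16}=\frac{-3+\sqrt{-7}}{4}.
\]
Alternatively, $z-1=(-1+\sqrt{-7})/4$ and
\[
  -\frac1{z-1}=-\frac{4}{-1+\sqrt{-7}}=-\frac{4(-1-\sqrt{-7})}{8}=\frac{1+\sqrt{-7}}2=\varpi.
\]
Thus $\varpi=\gamma z$ with $\gamma=\bigl(\begin{smallmatrix}0&-1\\1&-1\end{smallmatrix}\bigr)\in\mathrm{SL}_2(\Zz)$, and therefore $j(z)=j(\varpi)$.

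\textbf{Step 3 (value).} The classical table of class-number-one singular moduli gives $j\bigl((1+\sqrt{-7})/2\bigr)=-3375$, which we quote from Cox or Deuring.

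For independence from the table, we can also argue as follows. Since $h(-7)=1$, the value $j(\varpi)$ is a rational integer. The $q$-expansion at $q=\e^{2\pi\ii\varpi}=-\e^{-\pi\sqrt7}$, with $|q|\approx 2.5\times10^{-4}$, gives $j=q^{-1}+744+196884q+\cdots\approx-3375.00$. The tail is bounded rigorously by an interval enclosure, which pins down the integer. This is the style of the paper's certificates.

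\textbf{Main obstacle.} The only real point is the rigorous identification of the exact value. Quoting the classical theorem suffices. The numerical-plus-integrality route requires an explicit tail bound on the $q$-series coefficients, which is routine, for example via $c_n\le \e^{4\pi\sqrt n}$.
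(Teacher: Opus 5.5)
Your proof is correct, and it takes a somewhat different route from the paper.

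\textbf{The paper's route.} The paper never reduces $w=2\tau_0$ to a standard representative. It argues abstractly in three steps:
\begin{itemize}
\item Since $h(-7)=1$, the Hilbert class field is $K$, so $j(w)\in\OO_K$.
\item $j(w)$ is real, because the reflected point $-\bar w$ is $\mathrm{SL}_2(\Zz)$-equivalent to $w$ (again by $h(-7)=1$). Hence $j(w)\in\Zz$.
\item The integer is pinned by a certified interval evaluation of $j$ at $w$ itself, with rigorous tails.
\end{itemize}
It deliberately avoids quoting the tabulated value, citing the errata in Weber's tables.

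\textbf{Your route.} You exhibit the explicit matrix $\bigl(\begin{smallmatrix}0&-1\\1&-1\end{smallmatrix}\bigr)$ carrying $2\tau_0$ to $\varpi$, and your computation $-1/(z-1)=\varpi$ is right. You then quote the classical value $j(\varpi)=-15^3$, which is standard and well confirmed (e.g.\ in Cox), so your main route is already complete.

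\textbf{What the explicit reduction buys.} At $\varpi$ the nome is real, $q=-\e^{-\pi\sqrt7}$, so reality of $j$ is immediate and the paper's reflection step is unnecessary. Also $|q|\approx2.5\times10^{-4}$ there, versus $|q|=\e^{-\pi\sqrt7/2}\approx0.016$ at $w$, which makes any numerical pinning much cheaper. Your fallback route is then just the paper's integrality-plus-enclosure argument transported to $\varpi$, with integrality coming directly from $[\Qq(j(\OO_K)):\Qq]=h(-7)=1$.

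\textbf{What the paper's route buys.} It is table-free, at the cost of the abstract equivalence argument and a certified computation at the less favourable point.

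Incidentally, your unused computation $-1/z=(-3+\sqrt{-7})/4$ is exactly $-\bar z$, since $|z|=1$. It therefore makes the paper's abstract equivalence of $w$ and $-\bar w$ explicit via $S=\bigl(\begin{smallmatrix}0&-1\\1&0\end{smallmatrix}\bigr)$.
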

\begin{proof}
$w:=2\tau_0=(3+\sqrt{-7})/4$ satisfies $2w^2-3w+2=0$, a primitive quadratic
form of discriminant $-7$. Since $h(-7)=1$, the singular modulus $j(w)$ is
an algebraic integer lying in the Hilbert class field of $K$, which is $K$
itself \cite{Cox}. The reflected point $-\bar w=(-3+\sqrt{-7})/4\in\Hh$
is a root of the primitive form $2X^2+3X+2$, likewise of discriminant
$-7$; since $h(-7)=1$ there is a single class, so $-\bar w$ is
$\mathrm{SL}_2(\Zz)$-equivalent to $w$. As $j$ has rational
$q$-expansion coefficients,
$\overline{j(w)}=j(-\bar w)=j(w)$, so $j(w)$ is real. A real algebraic
integer in $\OO_K=\Zz[(1+\sqrt{-7})/2]$ is an ordinary integer. Finally,
interval evaluation (\texttt{cert0\_s2\_eq\_1.py}, using
$j=E_4^3/\Delta$ with rigorous tails) gives $|j(2\tau_0)+3375|\le
1.8\times10^{-91}$,
which pins the integer: $j(2\tau_0)=-3375=-15^3$.
\end{proof}

\begin{lemma}\label{lem:lamvalA}
$\lambda(2\tau_0)=(1+3\sqrt{-7})/2$ exactly.
\end{lemma}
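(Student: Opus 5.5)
The plan is to deduce the value of $\lambda(2\tau_0)$ from the exact value $j(2\tau_0)=-3375$ of Lemma~\ref{lem:jvalA}, via the classical relation
\[
  j=\frac{256\,(1-\lambda+\lambda^2)^3}{\lambda^2(1-\lambda)^2},
\]
valid with $\lambda=\lambda(w)$ and $j=j(w)$ for every $w\in\Hh$ (in particular $w=2\tau_0$). Clearing denominators, $\lambda(2\tau_0)$ is a root of the sextic
\[
  P(X):=256(1-X+X^2)^3+3375\,X^2(1-X)^2 .
\]
The proof then has two steps: identify all six roots of $P$ exactly, and decide by a rigorous numerical enclosure which root $\lambda(2\tau_0)$ is.

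For the first step, put $\lambda_*:=(1+3\sqrt{-7})/2$. From $(\lambda_*-\tfrac12)^2=-\tfrac{63}4$ we get $\lambda_*^2-\lambda_*+16=0$. Hence $\lambda_*(1-\lambda_*)=16$ and $1-\lambda_*+\lambda_*^2=-15$. Substituting gives $j=256\cdot(-15)^3/16^2=-3375$, so $P(\lambda_*)=0$.

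The $j$-expression is invariant under the anharmonic group $X\mapsto 1-X,\ 1/X,\ 1/(1-X),\ X/(X-1),\ (X-1)/X$. Therefore the whole orbit of $\lambda_*$ consists of roots of $P$:
\[
  \lambda_*,\quad \bar\lambda_*=1-\lambda_*,\quad 1/\lambda_*,\quad 1/\bar\lambda_*,\quad \lambda_*/(\lambda_*-1),\quad (\lambda_*-1)/\lambda_* .
\]
These six numbers are pairwise distinct. Indeed $|\lambda_*|=|\bar\lambda_*|=4$, the two reciprocals have modulus $1/4$, and the last two have modulus $1$; within each pair the imaginary parts have opposite signs. Since $\deg P=6$, they are exactly the roots of $P$. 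As a byproduct this is consistent with $s_2(\tau_0)=16/(\lambda(1-\lambda))=1$ by Theorem~\ref{thm:s2lambda}; note that $\lambda_*$ and $\bar\lambda_*$ both give $s_2=1$, so the choice between them is not needed for Theorem A but is needed for the exact statement of this lemma.

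For the second step, which is the only delicate point, I would evaluate $\lambda(2\tau_0)=(\vartheta_2/\vartheta_3)^4$ at nome $q=\e^{2\pi\ii\tau_0}$, where $|q|=\e^{-\pi\sqrt7/4}\approx0.125$. The theta products from the Notation section are truncated with rigorous geometric tail bounds, in the same interval-arithmetic framework as \texttt{cert0\_s2\_eq\_1.py}. An enclosure of radius much smaller than the minimal separation of the six roots (which is $\gg 10^{-1}$) singles out one root. It must be $\lambda_*$, whose imaginary part is positive and whose modulus is $4$, rather than its conjugate or one of the other orbit members. The main obstacle is just making this branch selection rigorous, in particular the sign of $\ImT\lambda(2\tau_0)$, since the leading factor $16q^{1/2}$ of $\lambda$ involves the choice $q_\vartheta^{1/4}$ with $q_\vartheta=\e^{\pi\ii\cdot 2\tau_0}$. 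Here $\e^{\pi\ii(2\tau_0)}$ must be used literally, with $2\tau_0=(3+\sqrt{-7})/4$, and not a reduced representative. The certified enclosure then completes the proof.
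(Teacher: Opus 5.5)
Your proposal is correct and follows essentially the paper's proof: you use $j(2\tau_0)=-3375$ and the $\lambda$--$j$ relation to place $\lambda(2\tau_0)$ among the roots of the sextic $\Phi$, and then isolate $(1+3\sqrt{-7})/2$ by a certified enclosure whose radius is far below its distance $3.75$ to the other roots. Two differences are cosmetic. First, you get all six roots exactly as the anharmonic orbit of $\lambda_*$ (precisely the splitting \eqref{eq:sexticB} the paper records later), whereas the paper factors $\Phi$ over $\Qq$ and locates the quartic's roots numerically. Second, there is no real branch issue, since $\vartheta_2^4=16q_\vartheta\prod(\cdots)$ with $q_\vartheta=\e^{\pi\ii w}$; the only care needed is to evaluate at $2\tau_0$ itself rather than at a reduced representative, as you already note.
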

\begin{proof}
The classical $\lambda$--$j$ relation states, identically,
\[
  j=256\,\frac{(\lambda^2-\lambda+1)^3}{\lambda^2(1-\lambda)^2}.
\]
By Lemma~\ref{lem:jvalA}, $\lambda(2\tau_0)$ is a root of
\[
  \Phi(X):=256(X^2-X+1)^3+3375\,X^2(1-X)^2.
\]
Exact factorization over $\Qq$ (verified by rational polynomial division in
\texttt{cert0\_s2\_eq\_1.py}):
\begin{equation}\label{eq:sextic}
  \Phi(X)=(X^2-X+16)\,\bigl(256X^4-512X^3+303X^2-47X+16\bigr),
\end{equation}
and $X^2-X+16$ divides $\Phi$ with multiplicity one (the remainder of the
second division is nonzero). The roots of $X^2-X+16$ are
$(1\pm\sqrt{-63})/2=(1\pm3\sqrt{-7})/2$. The four roots of the quartic
factor are approximately $31/32\pm0.24804\,\ii$ and $1/32\pm0.24804\,\ii$;
the minimum distance from $\lambda_0:=(1+3\sqrt{-7})/2$ to any other root
of $\Phi$ is $3.75$ (Appendix~\ref{app:poly}). Since $\Phi$ has rational
coefficients and the minimal polynomial $X^2-X+16$ of $\lambda_0$ occurs as
a simple factor, no other root equals $\lambda_0$. Interval evaluation gives
$|\lambda(2\tau_0)-\lambda_0|\le6.33\times10^{-95}\ll 3.75$, so
$\lambda(2\tau_0)=\lambda_0$.
\end{proof}

\begin{corollary}\label{cor:s2valA}
$s_2(\tau_0)=1$ exactly.
\end{corollary}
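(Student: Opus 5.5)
The plan is to substitute the exact value of Lemma~\ref{lem:lamvalA} into the $\lambda$-identity of Theorem~\ref{thm:s2lambda}. That theorem holds for every $\tau\in\Hh$, so it holds at $\tau=\tau_0$, giving
\[
  s_2(\tau_0)=\frac{16}{\lambda_0(1-\lambda_0)},\qquad \lambda_0=\frac{1+3\sqrt{-7}}2 .
\]
The only work left is to evaluate $\lambda_0(1-\lambda_0)$ exactly.

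For this I use the minimal polynomial $X^2-X+16$ of $\lambda_0$ identified in the proof of Lemma~\ref{lem:lamvalA}. From $\lambda_0^2-\lambda_0+16=0$ we get
\[
  \lambda_0(1-\lambda_0)=\lambda_0-\lambda_0^2=16,
\]
which is nonzero, so the quotient is defined and $s_2(\tau_0)=16/16=1$.

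As a direct check without the polynomial: $1-\lambda_0=\bar\lambda_0$, so $\lambda_0(1-\lambda_0)=|\lambda_0|^2=(1+63)/4=16$.

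There is no real obstacle once Lemma~\ref{lem:lamvalA} is available. The one point needing care is that $\lambda(2\tau_0)$ avoids the poles $0$ and $1$ of $R(w)=16/(w(1-w))$, and this is immediate because $\lambda_0\notin\{0,1\}$. If desired, one can also compare with the rigorous interval enclosure of $s_2(\tau_0)$ from \texttt{cert0\_s2\_eq\_1.py} as a consistency check, though this is not logically needed.
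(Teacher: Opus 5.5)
Your proposal is correct and is essentially the paper's own proof: substitute the exact value $\lambda(2\tau_0)=\lambda_0$ from Lemma~\ref{lem:lamvalA} into Theorem~\ref{thm:s2lambda}, then use $\lambda_0^2-\lambda_0+16=0$ to get $\lambda_0(1-\lambda_0)=16$, hence $s_2(\tau_0)=1$. Your extra check via $1-\lambda_0=\bar\lambda_0$ and $|\lambda_0|^2=16$ is a valid independent confirmation of the same computation, but it is not needed.
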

\begin{proof}
$\lambda_0(1-\lambda_0)=16$ (since $\lambda_0^2-\lambda_0+16=0$), so by
Theorem~\ref{thm:s2lambda},
$s_2(\tau_0)=16/(\lambda_0(1-\lambda_0))=16/16=1$.
\end{proof}

\begin{remark}[Weber's viewpoint]\label{rem:weber}
The function $s_2$ is a Weber class invariant in disguise: with Weber's
function (in his normalization)
$\mathfrak{f}_1(\tau)=\zeta_{48}^{-1}\eta\bigl((\tau+1)/2\bigr)/\eta(\tau)
=q^{-1/48}\prod_{n\ge1}(1+q^{n-1/2})$ one has
$s_2(\tau)=\mathfrak{f}_1(2\tau)^{24}$, so Corollary~\ref{cor:s2valA} says
that $\mathfrak{f}_1$ assumes a $24$th root of unity at $2\tau_0$. The
value of Lemma~\ref{lem:lamvalA} also has an arithmetic explanation. With
$\varpi=(1+\sqrt{-7})/2$ as in \eqref{eq:pirels},
\[
  \lambda_0=\bar\varpi^{\,4},\qquad 1-\lambda_0=\varpi^4,\qquad
  \lambda_0(1-\lambda_0)=\varpi^4\bar\varpi^{\,4}=N(\varpi)^4=2^4=16,
\]
which is the source of the ``$16$'' in Theorem~\ref{thm:s2lambda}, and
hence of $s_2(\tau_0)=1$. Finally, $j(2\tau_0)=-3375=-15^3$
(Lemma~\ref{lem:jvalA}) is the classical singular modulus of discriminant
$-7$, tabulated already in Weber's book \cite{We} (with $\gamma_2=-15$);
our interval-locked derivation above is independent of that table, whose
known errata \cite{BM} make an independent check desirable.
\end{remark}

\subsection{The certified path}\label{subsec:pathA}

\begin{proposition}[Certified; see Section~\ref{sec:cert}]\label{prop:path}
Let $\gamma$ be the path from $\ii$ to $\tau_0$ consisting of
\begin{enumerate}
\item[\textup{(i)}] the vertical segment from $\ii$ to $9\ii/16$;
\item[\textup{(ii)}] the quarter circle $\tau=\tfrac{\ii}{2}+\tfrac1{16}\e^{\ii\theta}$,
  $\theta$ from $\pi/2$ to $0$ \textup{(}from $9\ii/16$ to
  $\tfrac1{16}+\tfrac{\ii}{2}$\textup{)};
\item[\textup{(iii)}] the horizontal segment from $\tfrac1{16}+\tfrac{\ii}{2}$
  to $\tfrac38+\tfrac{\ii}{2}$;
\item[\textup{(iv)}] the vertical segment from $\tfrac38+\tfrac{\ii}{2}$ to $\tau_0$.
\end{enumerate}
Then $\gamma\setminus\{\tau_0\}\subset V$. Consequently
$\gamma\setminus\{\tau_0\}\subset W$ and $\tau_0\in\overline{W}$.
\end{proposition}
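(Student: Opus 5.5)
The goal is to show that $s_2(\gamma(t))\notin[0,64]$ for every point of $\gamma$ other than $\tau_0$. The consequences then follow from Proposition~\ref{prop:criterion}: we have $\gamma(0)=\ii\in D\subset W$ by Lemma~\ref{lem:disk}, $\gamma$ is connected, and $\gamma(1)=\tau_0$.

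I would handle each piece separately, using the product expansion \eqref{eq:s2prod} with rigorous tail bounds. Each arc is cut into finitely many parameter blocks. On each block I compute an interval (or rectangle) enclosure of $s_2$: evaluate the truncated product at the block center, and bound the derivative to control the variation across the block. It then suffices to check that each enclosure is disjoint from the real segment $[0,64]$. A block is accepted if its enclosure has imaginary part bounded away from $0$, or real part $<0$, or real part $>64$.

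\begin{itemize}
\item \textbf{Segment (i).} This is essentially free. On the imaginary axis $s_2(\ii t)$ is real; by Theorem~\ref{thm:samart} it is $\ge 64$ for $t\ge1/2$, and in fact it is $>64$ for $t\ge 9/16$ because $s_2(\ii t)$ is strictly decreasing in $t$. Monotonicity can be seen from the product: each factor $1+\e^{-2\pi(2n-1)t}$ and the prefactor $\e^{2\pi t}$ behave monotonically, and the value $64$ is attained at $t=1/2$.
\item \textbf{Arc (ii) and segment (iii).} Here $\ImT\tau\ge1/2$, so $|q|\le \e^{-\pi}$ and the product converges fast. Away from the real axis of $s_2$, the enclosures should certify $\ImT s_2\ne0$ directly. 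The quarter circle exists precisely to avoid the point $\ii/2$, where $s_2=64$ lies on the cut, and the interval certification must show that the imaginary part becomes nonzero immediately after leaving the axis.
\item \textbf{Segment (iv).} This runs down to $\ImT\tau=\sqrt7/8$ along $\ReT\tau=3/8$. Here $|q|\approx \e^{-2\pi\cdot0.33}\approx0.125$, which still converges well.
\end{itemize}

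The main obstacle is the endpoint of (iv). Since $s_2(\tau_0)=1$ lies on the cut (Corollary~\ref{cor:s2valA}), finitely many blocks cannot cover a half-open segment ending there. I would treat a small final neighborhood analytically. By Lemma~\ref{rem:deg2}, $\tau_0$ is not a critical point of $s_2$ (the only branch behavior is at cusps and elliptic points), so $s_2'(\tau_0)\ne0$. Writing $s_2(\tau_0-\ii h)=1-\ii h\,s_2'(\tau_0)+O(h^2)$, I would certify with a derivative enclosure that $s_2'(\tau_0)$ has nonzero real part, so that the direction $-\ii\,s_2'(\tau_0)$ is not real. A rigorous bound on $s_2''$ over a small disk then shows that $\ImT s_2(\tau_0-\ii h)\ne0$ for $0<h\le h_0$.

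If instead $-\ii\,s_2'(\tau_0)$ happened to be real, so that the vertical segment is tangent to the cut's preimage, I would modify the final approach direction. The remaining compact part $[\tfrac38+\tfrac{\ii}2,\,\tau_0-\ii h_0]$ is handled by ordinary blocks.

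The final step is to check that the junction points between pieces are covered by the adjacent blocks, and that no block enclosure straddles the real segment. Straddling is likeliest near the endpoints of arc (ii), where $s_2$ is close to the real axis. There I would refine blocks adaptively, again using the first-order expansion of $s_2$ about the points $9\ii/16$ and $\tfrac1{16}+\tfrac{\ii}2$.
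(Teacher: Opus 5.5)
Your plan follows the paper's proof in all essentials: finitely many blocks whose rigorous $s_2$-enclosures miss $[0,64]$, plus an analytic cap at $\tau_0$. The cap is built from the exact value $s_2(\tau_0)=1$, a certified sign of $\ReT s_2'(\tau_0)$, and a bound on $s_2''$. The paper certifies $\ReT s_2'(\tau_0)\in[-18.02,-15.04]$, bounds $|s_2''|\le190742$ by a Cauchy estimate, and takes $\varepsilon_0=1/64000$.

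The one place you depart from the paper, segment (i), is wrong as written. On the axis $s_2(\ii t)$ is \emph{increasing} for $t\ge1/2$: it equals $64$ at $t=1/2$ and $280+198\sqrt2\approx560$ at $t=1$. If it were decreasing from $64$, the whole segment would lie inside the cut. Nor can monotonicity be read off factor by factor, since $\e^{2\pi t}$ increases while each $(1+\e^{-2\pi(2n-1)t})^{24}$ decreases.

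A correct argument is as follows. The function $\log s_2(\ii t)=2\pi t+24\sum_{n\ge1}\log(1+\e^{-2\pi(2n-1)t})$ is strictly convex, since each summand has second derivative $a^2\e^{at}/(\e^{at}+1)^2>0$ with $a=2\pi(2n-1)$. Its derivative vanishes at $t=1/2$ because $s_2(\ii t)=s_2(\ii/(4t))$ by Lemma~\ref{lem:fricke}. Hence $s_2(\ii t)>s_2(\ii/2)=64$ for $t>1/2$. Equivalently, $\lambda(2\ii t)\in(0,1)$ gives $s_2(\ii t)=16/(\lambda(1-\lambda))\ge64$, with equality only when $\lambda(2\ii t)=1/2$, i.e.\ $t=1/2$. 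With this repair, piece (i) needs no interval computation at all, a mild simplification of the paper's $16$ axis blocks.

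Second, your cap sits on the wrong side of $\tau_0$. Since $\sqrt7/8<1/2$, segment (iv) descends to $\tau_0$, so the points of $\gamma$ near $\tau_0$ are $\tau_0+\ii h$ with $h>0$. The point $\tau_0-\ii h$ is not on the path, and the compact remainder to be covered by blocks is $[\tau_0+\ii h_0,\tfrac38+\tfrac{\ii}2]$. The controlling quantity is unchanged, $\ImT s_2(\tau_0+\ii h)=h\,\ReT s_2'(\tau_0)+O(h^2)$, so this is easily fixed; with the paper's enclosure one gets $\ImT s_2<0$ on the cap.

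Two further remarks on the cap:
\begin{itemize}
\item Your fallback of changing the approach direction is not available, because the statement fixes the path. The proposition genuinely rests on certifying $\ReT s_2'(\tau_0)\neq0$.
\item $s_2'(\tau_0)\neq0$ does not follow from Lemma~\ref{rem:deg2} for the reason you give. By Riemann--Hurwitz, the degree-$2$ map $s_2$ has a second ramification point besides the cusp $\tfrac12$: the Fricke fixed point $\ii/2$, which is not an elliptic point of $\Gamma_0(4)$. The correct reason is that $64=s_2(\ii/2)$ is the only critical value of $s_2$ on $\Hh$, while $s_2(\tau_0)=1$.
\end{itemize}

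Finally, arc blocks touching $9\ii/16$ cannot have imaginary-part enclosures avoiding $0$, contrary to your description. They are accepted through $\ReT s_2>64$, since $s_2(9\ii/16)\approx68.4$, and your acceptance test already allows this.
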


Indeed, $\gamma\setminus\{\tau_0\}$ is a connected subset of $V$ containing
the starting point $\ii\in D\subset W$ (Lemma~\ref{lem:disk}); a connected
subset of $V$ meeting the component $W$ lies entirely in $W$, and
$\gamma(t)\to\tau_0$ gives $\tau_0\in\overline{W}$.

The inclusion $\gamma\setminus\{\tau_0\}\subset V$ is certified by
interval arithmetic in Section~\ref{sec:cert}
(\texttt{cert2\_path\_k064.py}): each of the four pieces is covered by
finitely many certified blocks (16, 18, 37, and $15+13$ blocks
respectively) whose $s_2$-enclosures are provably disjoint from $[0,64]$,
except for a final cap $y\in(y_0,\,y_0+\varepsilon_0]$,
$y_0=\sqrt7/8$, $\varepsilon_0=1/64000$, which is closed analytically:
interval evaluation on the box
$\{|\ReT\tau-3/8|\le0.03,\ |\ImT\tau-y_0|\le0.03\}\subset\Hh$ gives
the strict outward-rounded upper bound
$|s_2|\le M_0$, $M_0=38.148391199402133945$;
by Cauchy's estimate, for
$|w-\tau_0|\le0.01$,
\[
  |s_2''(w)|\le M_2:=190742 ,
\]
the strict interval value of $2M_0/(0.02)^2$ being $190741.95599701066973$.
Since $s_2(\tau_0)=1$ \emph{exactly} (Corollary~\ref{cor:s2valA}),
\[
  s_2(\tau_0+\ii\varepsilon)-1=\ii\varepsilon\,s_2'(\tau_0)+R,
  \qquad |R|\le M_2\varepsilon^2/2,
\]
and rigorous interval evaluation (the roundings of $y_0$ and
$\varepsilon_0$ included inside the enclosures) yields
\[
  \ReT s_2'(\tau_0)\in[-18.0197947489675,\,-15.0394516865142].
\]
Therefore, for all $\varepsilon\in(0,\varepsilon_0]$,
\begin{equation}\label{eq:cap}
  \ImT s_2(\tau_0+\ii\varepsilon)
  =\varepsilon\,\ReT s_2'(\tau_0)+\ImT R
  \le\varepsilon\bigl(-15.0394\ldots+M_2\varepsilon_0/2\bigr)
  \le -13.5492\ldots\,\varepsilon<0,
\end{equation}
so $s_2$ misses $[0,64]$ on the cap as well. \qed

\subsection{Conclusion}

\begin{proof}[Proof of Theorem A]
By Propositions~\ref{prop:path} and~\ref{prop:criterion} with
$\tau_*=\tau_0$,
\[
  n_2\bigl(s_2(\tau_0)\bigr)=\mathrm{EK}(\tau_0).
\]
By Corollary~\ref{cor:s2valA}, $s_2(\tau_0)=1$, and
$n_2(1)=2\m(f+1)=2\m(f-1)$. By Theorem~\ref{thm:P1A},
$\mathrm{EK}(\tau_0)=8M_7$. Hence
\[
  2\m(f+1)=\mathrm{EK}(\tau_0)=8L'(g_7,0). \qedhere
\]
\end{proof}

\begin{remark}[The $\zeta_K(2)$-cancellation]
The exact cancellation of $\zeta_K(2)$ in $-T_1+16T_4$ is the arithmetic
reason the $k=1$ evaluation exists in closed form. Its source is the pair
of coincidences $|\beta|^4=16$ and $\ReT\bar\beta^2=1/2$ for
$\beta=(3+\sqrt{-7})/2$. In the evaluation for Theorem B
(\S\ref{subsec:P1B}) the analogous terms do \emph{not} cancel, and their
survival produces the Dirichlet term $d_7$.
\end{remark}

\section{Proof of Theorem B}\label{sec:thmB}

Throughout this section
\[
  k_\pm:=\frac{47\pm45\sqrt{-7}}2,\qquad
  \tau':=\frac{1+\sqrt{-7}}8,\qquad
  \tau'':=-\overline{\tau'}=\frac{-1+\sqrt{-7}}8,
\]
and the Fricke partners
\[
  \tau_w:=-\frac1{4\tau'}=\frac{-1+\sqrt{-7}}4,\qquad
  \tau_w':=-\overline{\tau_w}=-\frac1{4\tau''}=\frac{1+\sqrt{-7}}4 .
\]
Note $|k_\pm|^2=k_+k_-=(47^2+45^2\cdot7)/4=16384/4=4096$, i.e.\
$|k_\pm|=64$, and $\ImT k_\pm=\pm45\sqrt7/2\neq0$, so
$k_\pm\notin[0,64]$. Also
\begin{equation}\label{eq:ims}
  \ImT\tau'=\frac{\sqrt7}8<\frac12,\qquad
  \ImT\tau_w=\ImT\tau_w'=\frac{\sqrt7}4\ge\frac12 .
\end{equation}

\subsection{The Fricke lemma and the two preimages}\label{subsec:Bexact}

\begin{lemma}[Fricke invariance of $s_2$]\label{lem:fricke}
$s_2\bigl(-1/(4\tau)\bigr)=s_2(\tau)$ for all $\tau\in\Hh$.
\end{lemma}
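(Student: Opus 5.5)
The quickest route goes through Theorem~\ref{thm:s2lambda}, which writes $s_2(\tau)=R(\lambda(2\tau))$ with $R(w)=16/(w(1-w))$. The rational map $R$ is invariant under the involution $w\mapsto 1-w$. It therefore suffices to show
\[
  \lambda\bigl(2\cdot(-1/(4\tau))\bigr)=\lambda\bigl(-1/(2\tau)\bigr)=1-\lambda(2\tau).
\]
This is the classical transformation law $\lambda(-1/\sigma)=1-\lambda(\sigma)$, applied at $\sigma=2\tau$.

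To keep the argument self-contained, I will derive that law from the theta transformation formulas
\[
  \vartheta_3(-1/\sigma)^2=-\ii\sigma\,\vartheta_3(\sigma)^2,\qquad
  \vartheta_2(-1/\sigma)^2=-\ii\sigma\,\vartheta_4(\sigma)^2,\qquad
  \vartheta_4(-1/\sigma)^2=-\ii\sigma\,\vartheta_2(\sigma)^2 .
\]
These are standard consequences of Poisson summation, valid for the product definitions in the Notation via the Jacobi triple product. Taking fourth powers, the factor $(-\ii\sigma)^2$ cancels in the ratio. This gives $\lambda(-1/\sigma)=\vartheta_4(\sigma)^4/\vartheta_3(\sigma)^4$, which equals $1-\lambda(\sigma)$ by Jacobi's identity $\vartheta_3^4=\vartheta_2^4+\vartheta_4^4$, already used in the proof of Theorem~\ref{thm:s2lambda}. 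Substituting into $R$ gives $s_2(-1/(4\tau))=R(1-\lambda(2\tau))=R(\lambda(2\tau))=s_2(\tau)$.

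An alternative that avoids theta functions works directly with the definition \eqref{eq:s2def}, or with the Weber form $s_2(\tau)=\mathfrak f_1(2\tau)^{24}$ from Remark~\ref{rem:weber}. Write $s_2(\tau)=\eta(2\tau)^{48}/(\eta(\tau)^{24}\eta(4\tau)^{24})$; this follows from the product in Lemma~\ref{lem:s2prod} together with $\prod(1+q^{2n-1})=\eta(2\tau)^2/(\eta(\tau)\eta(4\tau))$, which is Euler's identity as used in Theorem~\ref{thm:s2lambda}. Then apply $\eta(-1/\sigma)=(-\ii\sigma)^{1/2}\eta(\sigma)$ to each factor under $\tau\mapsto-1/(4\tau)$:
\begin{itemize}
  \item $\eta(2\tau)\mapsto\eta(-1/(2\tau))$,
  \item $\eta(\tau)\mapsto\eta(-1/(4\tau))$,
  \item $\eta(4\tau)\mapsto\eta(-1/\tau)$.
\end{itemize}
The weight factors are $(-2\ii\tau)^{24}$ in the numerator and $(-4\ii\tau)^{12}(-\ii\tau)^{12}$ in the denominator. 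Both equal $2^{24}(-\ii\tau)^{24}$, and the eta quotient is carried back to itself with $\eta(\tau)$ and $\eta(4\tau)$ swapped.

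I expect no real obstacle, since this is a formal computation either way. The only point needing care is the branch of the square roots $(-\ii\sigma)^{1/2}$. This issue disappears because only even powers ($24$, $48$) occur, so I will present the eta-quotient version as the primary proof. As a sanity check I will cite the $q$-expansion: Fricke invariance is consistent with Lemma~\ref{rem:deg2}, since $s_2$ has simple poles at the two cusps $\infty$ and $0$, which the involution $\tau\mapsto-1/(4\tau)$ exchanges.
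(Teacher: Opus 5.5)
Your first route is the paper's proof. The paper writes $s_2(\tau)=16/\bigl(\lambda(2\tau)(1-\lambda(2\tau))\bigr)$ by Theorem~\ref{thm:s2lambda}, quotes $\lambda(-1/u)=1-\lambda(u)$ at $u=2\tau$, and uses the symmetry of $x(1-x)$ under $x\mapsto1-x$. Your derivation of that law from the theta inversion formulas and Jacobi's identity supplies the classical fact the paper only cites.

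Your declared primary argument, the eta-quotient computation, is a genuinely different route, and it is also correct. You write $s_2(\tau)=\eta(2\tau)^{48}/\bigl(\eta(\tau)^{24}\eta(4\tau)^{24}\bigr)$ and apply $\eta(-1/\sigma)=(-\ii\sigma)^{1/2}\eta(\sigma)$. Under $\tau\mapsto-1/(4\tau)$, this sends $\eta(2\tau)$ to an automorphy multiple of itself and interchanges $\eta(\tau)$ and $\eta(4\tau)$ up to such factors. The weight factors match as $2^{24}(-\ii\tau)^{24}$, and there is no branch issue because only integer powers of $-\ii\sigma$ survive.

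What this buys you: it bypasses Theorem~\ref{thm:s2lambda}, Jacobi's quartic identity and the theta inversion laws. It needs only the eta inversion formula, which the paper already uses in the proof of Lemma~\ref{lem:FE}. It also displays $s_2$ directly as a weight-zero eta quotient fixed by the Fricke involution $W_4$. The paper's route is shorter given what has already been established, and it fits the $\lambda$-based exactness arguments that come next.

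One small correction in your eta route: the product identity should read $\prod_{n\ge1}(1+q^{2n-1})=q^{1/24}\,\eta(2\tau)^2/\bigl(\eta(\tau)\eta(4\tau)\bigr)$. The factor $q^{1/24}$, raised to the $24$th power, is what absorbs the $q^{-1}$ in Lemma~\ref{lem:s2prod}. Your final formula for $s_2$ is right, but the intermediate identity as you stated it is off by that factor.
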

\begin{proof}
By Theorem~\ref{thm:s2lambda},
$s_2(\tau)=16/\bigl(\lambda(2\tau)(1-\lambda(2\tau))\bigr)$. The classical
transformation $\lambda(-1/u)=1-\lambda(u)$ with $u=2\tau$ gives
$\lambda\bigl(2\cdot(-1/(4\tau))\bigr)=\lambda(-1/(2\tau))=1-\lambda(2\tau)$;
since $x(1-x)$ is invariant under $x\mapsto1-x$, the claim follows.
(Numerically confirmed at sample points to $10^{-57}$ in
\texttt{verify\_P1\_n2pair.py}.)
\end{proof}

Since $s_2$ has degree $2$ on $X_0(4)$ (Lemma~\ref{rem:deg2}), $\tau'$
and $\tau_w=-1/(4\tau')$ are the two preimages of $k_+$ relevant here.

\subsection{Exactness of \texorpdfstring{$s_2(\tau')=(47+45\sqrt{-7})/2$}{s2(tau')=(47+45 sqrt(-7))/2}}\label{subsec:exactB}

\begin{lemma}\label{lem:jvalB}
$j(2\tau')=-3375$ exactly --- the same value as $j(2\tau_0)$
\textup{(}Lemma~\ref{lem:jvalA}\textup{)}.
\end{lemma}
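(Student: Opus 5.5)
We prove $j(2\tau')=-3375$ by showing that $2\tau'$ is $\mathrm{SL}_2(\Zz)$-equivalent to $2\tau_0$, and then invoking Lemma~\ref{lem:jvalA}. No new interval computation is needed.

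Set $w':=2\tau'=(1+\sqrt{-7})/4$. A direct expansion shows that $w'$ is a root of $2X^2-X+1$:
\[
2\cdot\frac{-6+2\sqrt{-7}}{16}-\frac{1+\sqrt{-7}}{4}+1=\frac{-3+\sqrt{-7}-1-\sqrt{-7}+4}{4}=0.
\]
This form is primitive with discriminant $1-8=-7$.

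Recall $w=2\tau_0=(3+\sqrt{-7})/4$. Then
\[
w-w'=\frac{3+\sqrt{-7}}4-\frac{1+\sqrt{-7}}4=\frac12 .
\]
This alone gives no $\mathrm{SL}_2(\Zz)$ relation, so I will use the class-number argument instead. Both $w$ and $w'$ are roots of primitive positive definite forms of discriminant $-7$. Since $h(-7)=1$, these forms are properly equivalent, so $w'=\gamma w$ for some $\gamma\in\mathrm{SL}_2(\Zz)$. The modular invariance of $j$ then gives $j(w')=j(w)=-3375$ by Lemma~\ref{lem:jvalA}.

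To make the equivalence explicit, and as a sanity check, I will reduce each form. For $2X^2-X+1$, the substitution $X\mapsto X+1$ on the form $(a,b,c)=(2,-1,1)$ gives the form with roots $w'-1$, and this is $2X^2+3X+2$. For $2X^2-3X+2$ (the form of $w$), translating by $1$ produces $(2,1,1)$, whose root in $\Hh$ is $(-1+\sqrt{-7})/4=w-1$. One then checks
\[
w-1=-\frac1{2}\cdot\frac{1}{w'}\ \text{-type relation}
\]
via the reduced form $(1,1,2)$, whose root is $\varpi-1$. Reducing both forms to $(1,1,2)$ through $S$ and $T$ moves yields an explicit matrix. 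This step is only bookkeeping, and I expect no obstacle: the class-number-one argument already suffices.

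For safety, I will also cross-check numerically that $|j(2\tau')+3375|$ is tiny, using the same certified routine as in Lemma~\ref{lem:jvalA}. Alternatively, one can redo the argument of Lemma~\ref{lem:jvalA} verbatim: $j(w')$ is a rational integer because it is real (the point $-\overline{w'}$ is a root of $2X^2+X+1$, which is in the same single class) and is an algebraic integer in $K$. Interval evaluation then pins it to $-3375$. The only conceivable subtlety is confirming primitivity and the discriminant, both of which are immediate.
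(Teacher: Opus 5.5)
Your argument is correct, but your main route differs from the paper's. The paper does not transfer the value from Lemma~\ref{lem:jvalA}. It repeats that lemma's argument at $w'=2\tau'$: class number one makes $j(w')$ an algebraic integer of $K$, real because $-\overline{w'}$ lies in the same single class, hence a rational integer. It then pins that integer by a second, independent interval enclosure $|j(2\tau')+3375|\le1.8\times10^{-91}$. This is exactly your ``alternatively'' paragraph.

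Your primary route uses $h(-7)=1$ only once, to obtain $w'=\gamma w$ with $\gamma\in\mathrm{SL}_2(\Zz)$, and then inherits $-3375$ from Lemma~\ref{lem:jvalA}. This makes the lemma a corollary requiring no new numerics; it is essentially the observation of Remark~\ref{rem:structural} turned into a proof. The paper's version instead buys an independent lock that does not rely on Lemma~\ref{lem:jvalA} being correct.

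One correction to your bookkeeping: the relation $w-1=-1/(2w')$ you allude to is true, but it is a determinant-$2$ (Fricke-type) map and does not preserve $j$, so it cannot serve as the explicit equivalence. An honest $\mathrm{SL}_2(\Zz)$ relation is $w'=(w-1)/w$, via $\bigl(\begin{smallmatrix}1&-1\\1&0\end{smallmatrix}\bigr)$. Alternatively, use $w'-1=-\bar w$ together with $j(-\bar w)=\overline{j(w)}=j(w)$. With either explicit relation, the class-number step is not even needed for the transfer.
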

\begin{proof}
$w:=2\tau'=(1+\sqrt{-7})/4=\varpi/2$ satisfies $2w^2-w+1=0$, a primitive
quadratic form of discriminant $-7$. Since $h(-7)=1$, exactly as in
Lemma~\ref{lem:jvalA}, $j(w)$ is a real algebraic integer in $\OO_K$,
hence an ordinary integer. Interval evaluation
(\texttt{cert0\_s2\_n2pair.py}, 100 dps, rigorous tails) gives
$|j(2\tau')+3375|\le1.8\times10^{-91}$, so $j(2\tau')=-3375$.
\end{proof}

\begin{remark}[A structural coincidence]\label{rem:structural}
Both $2\tau_0=(3+\sqrt{-7})/4$ and $2\tau'=(1+\sqrt{-7})/4$ are roots of
primitive positive definite quadratic forms of discriminant $-7$
($2X^2-3X+2$ and $2X^2-X+1$ respectively). Since $h(-7)=1$ there is a
single singular modulus of discriminant $-7$, namely $-3375=-15^3$
\cite{We,Cox}: the same sextic
$\Phi(X)=256(X^2-X+1)^3+3375X^2(1-X)^2$ therefore governs the
$\lambda$-values in both theorems.
\end{remark}

\begin{lemma}\label{lem:lamvalB}
$\lambda(2\tau')=(31+3\sqrt{-7})/32$ exactly.
\end{lemma}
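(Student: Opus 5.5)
The plan mirrors the proof of Lemma~\ref{lem:lamvalA}. By Lemma~\ref{lem:jvalB}, $j(2\tau')=-3375$. The classical $\lambda$--$j$ relation then shows that $\lambda(2\tau')$ is a root of the same sextic
\[
  \Phi(X)=256(X^2-X+1)^3+3375X^2(1-X)^2
\]
(Remark~\ref{rem:structural}). The factorization \eqref{eq:sextic} splits $\Phi$ as $(X^2-X+16)$ times the quartic $256X^4-512X^3+303X^2-47X+16$. The candidate $\lambda_1:=(31+3\sqrt{-7})/32$ should be a root of the quartic factor. Approximately, $\lambda_1\approx 0.96875+0.24804\,\ii$, which matches the root $31/32+0.24804\,\ii$ listed in the proof of Lemma~\ref{lem:lamvalA}.

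The first step is to determine the factorization of the quartic over $K=\Qq(\sqrt{-7})$. Its roots come in the pairs $31/32\pm0.248\ii$ and $1/32\pm0.248\ii$. The symmetry $X\mapsto1-X$ of $\Phi$ sends $\lambda_1$ to $(1-3\sqrt{-7})/32$. This suggests the factorization
\[
  256X^4-512X^3+303X^2-47X+16=(32X^2-62X+32)(8X^2-2X+\tfrac12)\cdot(\text{normalization}).
\]
A cleaner route is to verify directly that $\lambda_1$ is a root of the quadratic $16X^2-31X+16$. Indeed its roots are $(31\pm\sqrt{961-1024})/32=(31\pm3\sqrt{-7})/32$, using $\sqrt{-63}=3\sqrt{-7}$. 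One then checks by exact polynomial division that $16X^2-31X+16$ divides the quartic. The cofactor should be $16X^2-X+1$, whose roots are $(1\pm3\sqrt{-7})/32$, and multiplying $(16X^2-31X+16)(16X^2-X+1)$ reproduces the quartic. Thus $\lambda_1$ is an exact root of $\Phi$ with minimal polynomial $16X^2-31X+16$, occurring as a simple factor.

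The remaining step is isolation. The six roots of $\Phi$ are
\[
  (1\pm3\sqrt{-7})/2,\qquad (31\pm3\sqrt{-7})/32,\qquad (1\pm3\sqrt{-7})/32 .
\]
The nearest one to $\lambda_1$ is its conjugate, at distance $2\cdot 3\sqrt7/32\approx0.496$; the root $(1+3\sqrt{-7})/32$ is at distance $30/32$. An interval evaluation of $\lambda(2\tau')=\bigl(\vartheta_2/\vartheta_3\bigr)^4$ at $2\tau'=\varpi/2$, with rigorous tails as in \texttt{cert0\_s2\_n2pair.py}, will give an enclosure of radius far below $0.2$. Since the roots are distinct, this pins $\lambda(2\tau')=\lambda_1$.

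The only real obstacle is bookkeeping: confirming the exact factorization and making sure the interval enclosure is rigorous, since $\ImT(2\tau')=\sqrt7/4$ gives a comfortable nome $|q_\vartheta|=\e^{-\pi\sqrt7/4}\approx0.126$. As a consistency check, $\lambda_1(1-\lambda_1)=\lambda_1-\lambda_1^2$, and $16\lambda_1^2=31\lambda_1-16$ gives $\lambda_1-\lambda_1^2=1-\tfrac{15}{16}\lambda_1$. Then $s_2(\tau')=16/(1-\tfrac{15}{16}\lambda_1)$, which should equal $k_+$; this is verified exactly by rationalizing with $|1-\tfrac{15}{16}\lambda_1|^2=1/4$.
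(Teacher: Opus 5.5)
Your proposal is correct and is essentially the paper's proof. In both, $j(2\tau')=-3375$ places $\lambda(2\tau')$ among the roots of $\Phi$, the exact splitting $\Phi=(X^2-X+16)(16X^2-X+1)(16X^2-31X+16)$ isolates $\lambda_1$ at distance $3\sqrt7/16$ from the other roots, and a rigorous interval enclosure of radius below half that separation (the paper's is $1.2\times10^{-62}$) pins the value.

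There are two slips outside the main line, and neither affects the argument. Your tentative factor $8X^2-2X+\tfrac12$ should be $\tfrac12(16X^2-X+1)$. In the closing check, $|1-\tfrac{15}{16}\lambda_1|^2=(47^2+7\cdot45^2)/512^2=1/16$, not $1/4$; the value $1/16$ is exactly what gives $|k_+|=64$.
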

\begin{proof}
By Lemma~\ref{lem:jvalB}, $\lambda(2\tau')$ is a root of the same sextic
$\Phi$ of \eqref{eq:sextic}. Over $\Qq(\sqrt{-7})$ the sextic splits
completely into three quadratic factors (exact factorization, verified by
rational polynomial division in \texttt{cert0\_s2\_n2pair.py}):
\begin{equation}\label{eq:sexticB}
  \Phi(X)=(X^2-X+16)\,(16X^2-X+1)\,(16X^2-31X+16),
\end{equation}
with the six roots
\[
  \frac{1\pm3\sqrt{-7}}2,\qquad
  \frac{1\pm3\sqrt{-7}}{32},\qquad
  \frac{31\pm3\sqrt{-7}}{32}
\]
(Appendix~\ref{app:poly}). The candidate
$\lambda_1:=(31+3\sqrt{-7})/32$ is a root of the third factor
$16X^2-31X+16$; its minimal distance to the other five roots of $\Phi$ is
$3\sqrt7/16=0.4960\ldots$ (attained at its conjugate). Interval evaluation
gives $|\lambda(2\tau')-\lambda_1|\le1.2\times10^{-62}\ll0.496$, so
$\lambda(2\tau')=\lambda_1$.
\end{proof}

\begin{corollary}\label{cor:s2valB}
$s_2(\tau')=k_+$ and $s_2(\tau'')=k_-$ exactly; consequently
$s_2(\tau_w)=k_+$ and $s_2(\tau_w')=k_-$ exactly.
\end{corollary}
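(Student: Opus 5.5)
The plan is to compute $s_2(\tau')$ directly from Theorem~\ref{thm:s2lambda} and Lemma~\ref{lem:lamvalB}. Then obtain $\tau''$ by complex conjugation, and the Fricke partners from Lemma~\ref{lem:fricke}.

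\textbf{Step 1: the value at $\tau'$.} Put $\lambda_1=(31+3\sqrt{-7})/32$. Since it is a root of $16X^2-31X+16$, we have $\lambda_1^2=(31\lambda_1-16)/16$, and hence
\[
  \lambda_1(1-\lambda_1)=\lambda_1-\lambda_1^2=\frac{16-15\lambda_1}{16}.
\]
It is cleaner to work with the conjugate root directly. We have $\lambda_1\bar\lambda_1=1$ and $\lambda_1+\bar\lambda_1=31/16$, so $1/\lambda_1=\bar\lambda_1$. Also $1-\lambda_1=(1-3\sqrt{-7})/32$, whose norm is $(1+63)/1024=1/16$; therefore
\[
  \frac{1}{1-\lambda_1}=16\,\overline{(1-\lambda_1)}=\frac{1+3\sqrt{-7}}{2}.
\]
Theorem~\ref{thm:s2lambda} then gives
\[
  s_2(\tau')=16\cdot\bar\lambda_1\cdot\frac{1+3\sqrt{-7}}2
  =\frac{(31-3\sqrt{-7})(1+3\sqrt{-7})}{4}.
\]
Expanding, $(31-3\sqrt{-7})(1+3\sqrt{-7})=31+93\sqrt{-7}-3\sqrt{-7}+63=94+90\sqrt{-7}$. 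Dividing by $4$ yields $(47+45\sqrt{-7})/2=k_+$, which is a finite exact check.

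\textbf{Step 2: the value at $\tau''$.} Since $s_2$ has real (integral) $q$-coefficients by Lemma~\ref{lem:s2prod}, we have $s_2(-\bar\tau)=\overline{s_2(\tau)}$, because $q(-\bar\tau)=\overline{q(\tau)}$. Applying this with $\tau''=-\overline{\tau'}$ gives $s_2(\tau'')=\overline{k_+}=k_-$.

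\textbf{Step 3: the Fricke partners.} Lemma~\ref{lem:fricke} with $\tau_w=-1/(4\tau')$ gives $s_2(\tau_w)=s_2(\tau')=k_+$, and likewise $s_2(\tau_w')=s_2(-1/(4\tau''))=k_-$. We should also confirm the displayed identities $-1/(4\tau')=(-1+\sqrt{-7})/4$ and $-1/(4\tau'')=(1+\sqrt{-7})/4$. For the first, $4\tau'=(1+\sqrt{-7})/2=\varpi$, so $-1/\varpi=-\bar\varpi/2=(-1+\sqrt{-7})/4$; the second is its reflection.

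The only step that carries content is Lemma~\ref{lem:lamvalB}, which is already established, so I expect no real obstacle. The arithmetic of Step~1 is the only place where a slip could occur, and it can be cross-checked against the numerical value of $s_2(\tau')$ obtained from the $q$-expansion.
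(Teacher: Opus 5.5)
Your proof is correct and follows the paper's route: you feed $\lambda(2\tau')=\lambda_1$ from Lemma~\ref{lem:lamvalB} into Theorem~\ref{thm:s2lambda}, then get $\tau''$ from $s_2(-\bar\tau)=\overline{s_2(\tau)}$ and the Fricke partners from Lemma~\ref{lem:fricke}. The only difference is cosmetic arithmetic: you invert the two factors separately using $|\lambda_1|=1$ and $|1-\lambda_1|^2=1/16$, whereas the paper uses the quadratic relation to obtain $\lambda_1(1-\lambda_1)=(47-45\sqrt{-7})/512$ and inverts that directly.
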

\begin{proof}
From $16\lambda_1^2-31\lambda_1+16=0$ we get
$\lambda_1(1-\lambda_1)=(16-15\lambda_1)/16$, and substituting
$\lambda_1=(31+3\sqrt{-7})/32$,
\[
  \lambda_1\bigl(1-\lambda_1\bigr)=\frac{47-45\sqrt{-7}}{512}.
\]
By Theorem~\ref{thm:s2lambda},
\[
  s_2(\tau')=\frac{16}{\lambda_1(1-\lambda_1)}
  =\frac{8192}{47-45\sqrt{-7}}=\frac{47+45\sqrt{-7}}2=k_+,
\]
since $(47-45\sqrt{-7})(47+45\sqrt{-7})=47^2+7\cdot45^2=16384=2\cdot8192$.
Since $s_2$ has integral (hence real) $q$-coefficients,
$s_2(-\bar\tau)=\overline{s_2(\tau)}$, and $\tau''=-\overline{\tau'}$ gives
$s_2(\tau'')=k_-$. The Fricke lemma (Lemma~\ref{lem:fricke}) then gives
$s_2(\tau_w)=s_2(\tau')=k_+$ and $s_2(\tau_w')=s_2(\tau'')=k_-$.
\end{proof}

\subsection{CM evaluation of \texorpdfstring{$\mathrm{EK}(\tau_w)$}{EK(tauw)}}\label{subsec:P1B}

Recall $\varpi=(1+\sqrt{-7})/2$ and the relations \eqref{eq:pirels}. Since
$4\tau_w=-1+\sqrt{-7}=-2\bar\varpi$,
\begin{equation}\label{eq:L4B}
  \cL_4(\tau_w)=\Zz+\Zz\,4\tau_w=\Zz+\Zz\,2\bar\varpi=\Zz+\Zz\sqrt{-7}=:\OO_2,
\end{equation}
the order of conductor $2$ in $\OO_K$ (discriminant $-28$, class number
$1$ --- only its $\Zz$-module structure is used below), and
\begin{equation}\label{eq:L1B}
  \cL_1(\tau_w)=\Zz+\Zz\tau_w=\frac{4\Zz+\Zz\,4\tau_w}4
  =\frac{2\Zz+\Zz\bar\varpi}2=\frac{(\bar\varpi)}2,
\end{equation}
because
$(\bar\varpi)=\bar\varpi\OO_K=\bar\varpi\Zz+\bar\varpi^2\Zz
=\bar\varpi\Zz+(\bar\varpi-2)\Zz=2\Zz+\bar\varpi\Zz$
(using $\bar\varpi^2=\bar\varpi-2$). So $\cL_1(\tau_w)$ is homothetic to the
principal ideal $(\bar\varpi)$ of norm $2$. At the conjugate point
$\tau_w'=\varpi/2$ one gets $\cL_4(\tau_w')=\OO_2$ (the same) and
$\cL_1(\tau_w')=(\varpi)/2$ --- the other prime above $2$.

\textbf{Imprimitive sums over $\OO_2$.} Define
\[
  B(\OO_2):={\sum_{\gamma\in\OO_2}}^{\!\prime}|\gamma|^{-4},\qquad
  G(\OO_2):={\sum_{\gamma\in\OO_2}}^{\!\prime}\frac{\bar\gamma^2}{|\gamma|^6}.
\]
Note $G(\OO_2)$ is real: $\OO_2=\Zz+\Zz\sqrt{-7}$ is stable under
conjugation, so the sum equals its own conjugate.

\begin{lemma}[Odd norm $\Leftrightarrow$ $1\bmod2$]\label{lem:oddnorm}
For $\gamma=a+b\varpi\in\OO_K$,
$N(\gamma)=a^2+ab+2b^2\equiv a(a+b)\pmod2$. Hence $N(\gamma)$ is odd if
and only if $a$ is odd and $b$ is even, i.e.\ $\gamma\equiv1\pmod{2\OO_K}$.
\end{lemma}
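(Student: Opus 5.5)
The proof is a direct computation of the norm form, followed by a parity analysis. First I will expand $N(\gamma)=\gamma\bar\gamma$ for $\gamma=a+b\varpi$ using the relations \eqref{eq:pirels}:
\[
  N(a+b\varpi)=(a+b\varpi)(a+b\bar\varpi)
  =a^2+ab(\varpi+\bar\varpi)+b^2\varpi\bar\varpi
  =a^2+ab+2b^2 .
\]
Reducing modulo $2$, the term $2b^2$ vanishes, and $a^2\equiv a$, so $N(\gamma)\equiv a^2+ab= a(a+b)\pmod 2$.

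For the parity statement I will run through the four classes of $(a,b)\bmod 2$. The product $a(a+b)$ is odd exactly when both $a$ and $a+b$ are odd, that is, when $a$ is odd and $b$ is even. In the other three cases it is even: if $a$ is even, the factor $a$ is even; if $a$ and $b$ are both odd, the factor $a+b$ is even.

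It remains to identify the condition "$a$ odd, $b$ even" with $\gamma\equiv1\pmod{2\OO_K}$. Since $\OO_K=\Zz+\Zz\varpi$ is a free $\Zz$-module with basis $\{1,\varpi\}$, we have $2\OO_K=2\Zz+2\Zz\varpi$. Hence $\gamma-1=(a-1)+b\varpi$ lies in $2\OO_K$ if and only if $a-1$ and $b$ are both even, which is the same condition.

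No step here is a real obstacle. The only point needing care is the last one, where one uses that $\{1,\varpi\}$ is a $\Zz$-basis of $\OO_K$ (as fixed in the Notation), so that reduction mod $2\OO_K$ is coordinatewise.
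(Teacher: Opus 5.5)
Your proof is correct and follows essentially the same route as the paper: reduce $a^2+ab+2b^2$ modulo $2$ to $a(a+b)$ and note this is odd exactly when $a$ is odd and $b$ is even. You additionally verify the norm formula from \eqref{eq:pirels} and justify the identification with $\gamma\equiv1\pmod{2\OO_K}$ via the $\Zz$-basis $\{1,\varpi\}$; the paper leaves both of these implicit.
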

\begin{proof}
$a^2+ab+2b^2\equiv a^2+ab=a(a+b)\pmod2$, and $a(a+b)$ is odd iff $a$ is
odd and $a+b$ is odd, i.e.\ $a$ odd, $b$ even.
\end{proof}

\begin{corollary}[Decomposition]\label{cor:decomp}
$\OO_2\setminus\{0\}=\bigl(2\OO_K\setminus\{0\}\bigr)\sqcup
\{\gamma\in\OO_K:N(\gamma)\text{ odd}\}$.
\end{corollary}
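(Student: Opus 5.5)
The plan is to describe $\OO_2=\Zz+\Zz\sqrt{-7}$ inside $\OO_K=\Zz[\varpi]$ in coordinates, and then sort its nonzero elements by the parity of the norm using Lemma~\ref{lem:oddnorm}.

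\textbf{Step 1 (coordinates for $\OO_2$).} Since $\sqrt{-7}=2\varpi-1$, we have $\OO_2=\Zz+\Zz(2\varpi-1)=\Zz+2\Zz\varpi$. Hence
\[
  \OO_2=\{a+b\varpi\in\OO_K:\ b\ \text{even}\},
\]
a sublattice of index $2$ in $\OO_K$. In particular $\OO_2=\Zz+2\OO_K$, and $\OO_2\supset 2\OO_K$.

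\textbf{Step 2 (splitting by the parity of $a$).} Take $\gamma=a+b\varpi\in\OO_2\setminus\{0\}$, so $b$ is even.
\begin{itemize}
\item If $a$ is even, then $\gamma\in 2\OO_K\setminus\{0\}$.
\item If $a$ is odd, then $\gamma$ has $a$ odd and $b$ even, so $N(\gamma)$ is odd by Lemma~\ref{lem:oddnorm}.
\end{itemize}
So every nonzero element of $\OO_2$ lies in one of the two sets on the right-hand side.

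\textbf{Step 3 (reverse inclusion and disjointness).} The inclusion $2\OO_K\setminus\{0\}\subset\OO_2\setminus\{0\}$ holds by Step 1. If $N(\gamma)$ is odd, then Lemma~\ref{lem:oddnorm} gives $a$ odd and $b$ even. Then $\gamma\in\OO_2$, and $\gamma\neq0$ because $N(0)=0$ is even.

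The two sets are disjoint: every element of $2\OO_K$ has norm $N(2\delta)=4N(\delta)$, which is even, so it cannot have odd norm.

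There is no real obstacle here. The only point that needs care is the identity $\OO_2=\Zz+2\Zz\varpi$, which follows directly from $\sqrt{-7}=2\varpi-1$.
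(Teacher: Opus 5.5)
Your proof is correct and follows the same route as the paper. Both identify $\OO_2=\Zz+\Zz\sqrt{-7}=\Zz+2\Zz\varpi=\Zz+2\OO_K$, then split according to whether $\gamma$ lies in $2\OO_K$ or in $1+2\OO_K$ and apply Lemma~\ref{lem:oddnorm}; you also spell out the reverse inclusion and the disjointness (via $N(2\delta)=4N(\delta)$), which the paper leaves implicit.
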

\begin{proof}
$\OO_2=\Zz+\Zz\sqrt{-7}=\Zz+\Zz(1+\sqrt{-7})=\Zz+2\Zz\varpi=\Zz+2\OO_K$,
so $\gamma\in\OO_2$ is either in $2\OO_K$ or in $1+2\OO_K$, and
Lemma~\ref{lem:oddnorm} applies.
\end{proof}

\begin{proposition}[Evaluation of the imprimitive sums]\label{prop:BG}
\[
  B(\OO_2)=\frac54\,\zeta_K(2),\qquad
  G(\OO_2)=3\,L(g_7,3).
\]
\end{proposition}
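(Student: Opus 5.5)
The plan is to reduce both sums to sums over $\OO_K$ and its principal ideals, and then read off the values from Proposition~\ref{prop:theta}. Write $\phi(\gamma)$ for either summand, $|\gamma|^{-4}$ or $\bar\gamma^2/|\gamma|^6$. Both series converge absolutely: the general term is $O(|\gamma|^{-4})$ over a rank-two lattice. So the sums may be rearranged and split freely.

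By Corollary~\ref{cor:decomp},
\[
{\sum_{\gamma\in\OO_2}}^{\!\prime}\phi(\gamma)
={\sum_{\gamma\in2\OO_K}}^{\!\prime}\phi(\gamma)
+\sum_{\substack{\gamma\in\OO_K\\ N(\gamma)\text{ odd}}}\phi(\gamma).
\]
For the odd-norm part I will use inclusion--exclusion on the primes above $2$. Since $(2)=(\varpi)(\bar\varpi)$, a nonzero $\gamma\in\OO_K$ has even norm if and only if $\gamma\in(\varpi)\cup(\bar\varpi)$. Moreover $(\varpi)\cap(\bar\varpi)=(2)$, because the two primes are distinct. Hence
\[
\sum_{N(\gamma)\text{ odd}}\phi
=S(\OO_K)-S\bigl((\varpi)\bigr)-S\bigl((\bar\varpi)\bigr)+S\bigl((2)\bigr),
\]
where $S(\mathfrak a)$ denotes the sum of $\phi$ over $\mathfrak a\setminus\{0\}$.

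Each $S$ is evaluated by substituting $\gamma=\alpha\delta$ with $\delta\in\OO_K\setminus\{0\}$, exactly as in the proof of Proposition~\ref{prop:TevalA}:
\[
S(\alpha\OO_K)=|\alpha|^{-4}\cdot2\zeta_K(2)\quad\text{for }\phi=|\cdot|^{-4},\qquad
S(\alpha\OO_K)=\frac{\bar\alpha^2}{|\alpha|^6}\cdot2L(g_7,3)\quad\text{for }\phi=\bar\gamma^2/|\gamma|^6,
\]
using \eqref{eq:abs2} and \eqref{eq:hecke} at $s=3$.

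For $B$, with $|\varpi|^2=|\bar\varpi|^2=2$, I obtain $2\zeta_K(2)\bigl(1-\tfrac14-\tfrac14+\tfrac1{16}\bigr)=2\zeta_K(2)\cdot\tfrac{9}{16}$ for the odd part. The $2\OO_K$ part adds $2\zeta_K(2)\cdot\tfrac1{16}$. The total is $2\zeta_K(2)\cdot\tfrac{10}{16}=\tfrac54\zeta_K(2)$.

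For $G$ the coefficients are $\bar\varpi^2/8$ and $\varpi^2/8$, and by \eqref{eq:pirels} their sum is $(\varpi^2+\bar\varpi^2)/8=-\tfrac38$. The coefficient for $(2)$ is $4/64=\tfrac1{16}$. So the odd part is $2L(g_7,3)\bigl(1+\tfrac38+\tfrac1{16}\bigr)=2L(g_7,3)\cdot\tfrac{23}{16}$; this is the Euler factor $23/16$ announced in the introduction. The $2\OO_K$ part adds $2L(g_7,3)\cdot\tfrac1{16}$, for a total of $2L(g_7,3)\cdot\tfrac{24}{16}=3L(g_7,3)$.

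The only delicate point is the bookkeeping in the inclusion--exclusion. Specifically, the even-norm set must be exactly the union $(\varpi)\cup(\bar\varpi)$, and the intersection must be exactly $(2)$. Both follow from unique factorization of ideals ($h(-7)=1$ is not even needed here) together with Lemma~\ref{lem:oddnorm}. The other point to watch is that the individual $G$-contributions of $(\varpi)$ and $(\bar\varpi)$ are non-real; only their sum $\ReT\varpi^2/4$ is real, which is consistent with $G(\OO_2)$ being real. A numerical cross-check against the direct lattice sums would confirm the constants.
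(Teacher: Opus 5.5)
Your proof is correct and follows the paper's argument: the same splitting of $\OO_2\setminus\{0\}$ via Corollary~\ref{cor:decomp}, the same rescaling for the $2\OO_K$ part, and the same removal of the two primes above $2$ for the odd-norm part. Your inclusion--exclusion over $(\varpi)$, $(\bar\varpi)$, $(2)$ is the expanded form of the paper's Euler factors $(1-2^{-s})^2$ and $\bigl(1-\varpi^2/8\bigr)\bigl(1-\bar\varpi^2/8\bigr)=23/16$, carried out on lattice elements rather than on ideals, which spares you the passage to ideals and the factor $2$ from the units $\pm1$.
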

\begin{proof}
By Lemma~\ref{lem:oddnorm}, every ideal $\mathfrak{a}\triangleleft\OO_K$
of odd norm has both its generators ($\pm\gamma$, and $-1\equiv1\bmod2$)
in the ray class $1\bmod2$, so for $\ReT s$ large
\begin{equation}\label{eq:oddzeta}
  {\sum_{N(\gamma)\text{ odd}}}^{\!\prime}N(\gamma)^{-s}
  =2\sum_{N(\mathfrak a)\text{ odd}}N(\mathfrak a)^{-s}
  =2\,\zeta_K(s)\,\bigl(1-N(\varpi)^{-s}\bigr)\bigl(1-N(\bar\varpi)^{-s}\bigr)
  =2\,\zeta_K(s)\,(1-2^{-s})^2 .
\end{equation}
At $s=2$: $\sum'_{N(\gamma)\text{ odd}}|\gamma|^{-4}
=2\zeta_K(2)\cdot(3/4)^2=(9/8)\zeta_K(2)$, hence by
Corollary~\ref{cor:decomp} and \eqref{eq:abs2},
\[
  B(\OO_2)
  ={\sum_{\gamma\in2\OO_K\setminus\{0\}}}\!\!|\gamma|^{-4}
   +{\sum_{N(\gamma)\text{ odd}}}^{\!\prime}|\gamma|^{-4}
  =2^{-4}\cdot2\zeta_K(2)+\frac98\,\zeta_K(2)
  =\frac54\,\zeta_K(2).
\]
For the $\psi$-twisted sum, $\psi((\alpha))=\alpha^2$ contributes equally
for both generators ($(-\alpha)^2=\alpha^2$), so by \eqref{eq:hecke},
\begin{equation}\label{eq:oddhecke}
  {\sum_{N(\gamma)\text{ odd}}}^{\!\prime}\frac{\gamma^2}{N(\gamma)^{3}}
  =2L(g_7,3)\Bigl(1-\frac{\varpi^2}{8}\Bigr)\Bigl(1-\frac{\bar\varpi^2}{8}\Bigr)
  =2L(g_7,3)\cdot\frac{23}{16}=\frac{23}{8}\,L(g_7,3),
\end{equation}
the Euler factor being \emph{exact} by \eqref{eq:pirels}:
\[
  \Bigl(1-\frac{\varpi^2}{8}\Bigr)\Bigl(1-\frac{\bar\varpi^2}{8}\Bigr)
  =1-\frac{\varpi^2+\bar\varpi^2}{8}+\frac{(\varpi\bar\varpi)^2}{64}
  =1+\frac38+\frac1{16}=\frac{23}{16}.
\]
Conjugating (note $L(g_7,3)\in\Rr$ since $a_n\in\Zz$) gives the same value
with $\bar\gamma^2$, and
\[
  G(\OO_2)
  ={\sum_{\gamma\in2\OO_K\setminus\{0\}}}\!\!\frac{\bar\gamma^2}{|\gamma|^6}
   +{\sum_{N(\gamma)\text{ odd}}}^{\!\prime}\frac{\bar\gamma^2}{|\gamma|^6}
  =\frac{4}{64}\cdot2L(g_7,3)+\frac{23}{8}\,L(g_7,3)
  =3L(g_7,3). \qedhere
\]
\end{proof}

\begin{remark}[Independent cross-checks]\label{rem:crosscheck}
(i) Direct Poisson/Bessel Epstein evaluation of $B(\OO_2)$, $G(\OO_2)$
(the rectangular lattice form $u^2+7v^2$) agrees with
Proposition~\ref{prop:BG} to $10^{-61}$. (ii) The Glasser--Zucker closed
form for the discriminant-$(-28)$ Epstein zeta \cite{GZ},
$Z(\OO_2,s)=2(1-2^{1-s}+2^{1-2s})\,\zeta(s)L(\chi_{-7},s)$, at $s=2$ gives
$2(1-\tfrac12+\tfrac18)\zeta_K(2)=\tfrac54\zeta_K(2)$, agreeing with
$B(\OO_2)$ --- an independent literature formula. (iii) The complementary
coset sums over $\varpi+\OO_2$ (since $\OO_K=\OO_2\sqcup(\varpi+\OO_2)$)
evaluate to $B(\OO_K)-B(\OO_2)=(3/4)\zeta_K(2)$ and
$2L(g_7,3)-3L(g_7,3)=-L(g_7,3)$, confirmed directly to $10^{-61}$
(\texttt{verify\_P1\_n2pair.py}, checks [O1]--[O3]).
\end{remark}

\begin{proposition}[The $T$-sums at $\tau_w$]\label{prop:TevalB}
With $L_3:=L(g_7,3)$,
\begin{equation}\label{eq:TevalB}
  T_4(\tau_w)=6L_3+\frac54\,\zeta_K(2),\qquad
  T_1(\tau_w)=-12L_3+8\zeta_K(2).
\end{equation}
The same values hold at $\tau_w'$.
\end{proposition}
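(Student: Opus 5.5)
The plan is to argue exactly as in Proposition~\ref{prop:TevalA}: write each $T_d$ in the lattice form \eqref{eq:Talt} and substitute the lattice identifications \eqref{eq:L4B} and \eqref{eq:L1B}. For $T_4$ this gives a sum over the order $\OO_2$, which Proposition~\ref{prop:BG} already evaluates. For $T_1$ it gives a rescaled principal ideal of $\OO_K$, which reduces to Proposition~\ref{prop:theta}.

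\textbf{The sum $T_4(\tau_w)$.} Since $\cL_4(\tau_w)=\OO_2$, the form \eqref{eq:Talt} gives
\[
T_4(\tau_w)=2\ReT G(\OO_2)+B(\OO_2).
\]
Here $G(\OO_2)$ is real, as noted before Lemma~\ref{lem:oddnorm}. Inserting Proposition~\ref{prop:BG} gives
\[
T_4(\tau_w)=2\cdot 3L_3+\tfrac54\zeta_K(2)=6L_3+\tfrac54\zeta_K(2).
\]

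\textbf{The sum $T_1(\tau_w)$.} Here $\cL_1(\tau_w)=(\bar\varpi)/2$. Put $\lambda=\alpha/2$ with $\alpha\in(\bar\varpi)\setminus\{0\}$; then both summands of \eqref{eq:Talt} pick up a factor $2^4=16$. Next write $\alpha=\bar\varpi\gamma$ with $\gamma\in\OO_K\setminus\{0\}$, and use $|\bar\varpi|^2=2$ from \eqref{eq:pirels} together with \eqref{eq:hecke} and \eqref{eq:abs2}:
\[
{\sum_{\alpha\in(\bar\varpi)}}^{\!\prime}|\alpha|^{-4}=\tfrac14\cdot2\zeta_K(2)=\tfrac12\zeta_K(2),\qquad
{\sum_{\alpha\in(\bar\varpi)}}^{\!\prime}\frac{\bar\alpha^2}{|\alpha|^6}=\frac{\varpi^2}{8}\cdot 2L_3=\frac{\varpi^2}{4}L_3 .
\]
By \eqref{eq:pirels}, $\ReT\varpi^2=-\tfrac32$. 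Therefore
\[
T_1(\tau_w)=16\Bigl[2\cdot\bigl(-\tfrac32\bigr)\tfrac{L_3}{4}+\tfrac12\zeta_K(2)\Bigr]=-12L_3+8\zeta_K(2).
\]

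\textbf{The conjugate point $\tau_w'$.} There $\cL_4(\tau_w')=\OO_2$ is unchanged, so $T_4$ is identical. The other lattice is $\cL_1(\tau_w')=(\varpi)/2$, and the same computation goes through with $\bar\varpi^2$ in place of $\varpi^2$. Since $\ReT\bar\varpi^2=\ReT\varpi^2=-\tfrac32$, the value of $T_1$ is the same. (Alternatively, $\tau_w'=-\overline{\tau_w}$ and $T_d(-\bar\tau)=T_d(\tau)$, because the summands of \eqref{eq:Td} are invariant under $m\mapsto -m$, $x\mapsto-x$.)

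The main obstacle is already handled by Proposition~\ref{prop:BG}: the non-maximal order $\OO_2$ enters only through $T_4$. What remains is bookkeeping of the scaling factors and of the real part $\ReT\varpi^2$. The one point to check carefully is that the rescaled row-by-row sums \eqref{eq:Td} may be treated as the absolutely convergent lattice sums \eqref{eq:hecke} at $s=3$. This holds because the $\bar\lambda^2/|\lambda|^6$ terms are $O(|\lambda|^{-4})$, so every sum involved converges absolutely.
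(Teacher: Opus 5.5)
Your proposal is correct and follows the paper's proof essentially step by step. You evaluate $T_4(\tau_w)$ via \eqref{eq:Talt} and Proposition~\ref{prop:BG}, and $T_1(\tau_w)$ by rescaling $\cL_1(\tau_w)=(\bar\varpi)/2$, factoring out $\bar\varpi$, and using $\ReT\varpi^2=\ReT\bar\varpi^2=-\tfrac32$ for both $\tau_w$ and $\tau_w'$. Your alternative symmetry argument $T_d(-\bar\tau)=T_d(\tau)$ is a valid addition, and so is your absolute-convergence remark (the summands are $O(|\lambda|^{-4})$, so the row-by-row ordering is immaterial).
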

\begin{proof}
By \eqref{eq:Talt} with $\cL_4(\tau_w)=\OO_2$ and
Proposition~\ref{prop:BG},
\[
  T_4(\tau_w)=2\ReT G(\OO_2)+B(\OO_2)=6L_3+\frac54\,\zeta_K(2).
\]
For $\cL_1(\tau_w)=(\bar\varpi)/2$, write $\lambda=\alpha/2$,
$\alpha\in(\bar\varpi)\setminus\{0\}$, $\alpha=\bar\varpi\gamma$,
$\gamma\in\OO_K\setminus\{0\}$. Then $|\lambda|^{-4}=16|\alpha|^{-4}$ and
$\bar\lambda^2/|\lambda|^6=16\,\bar\alpha^2/|\alpha|^6$, with
\[
  {\sum_{\alpha\in(\bar\varpi)}}^{\!\prime}|\alpha|^{-4}
  =|\bar\varpi|^{-4}{\sum_{\gamma\in\OO_K}}^{\!\prime}|\gamma|^{-4}
  =N(\bar\varpi)^{-2}\cdot2\zeta_K(2)=\frac{\zeta_K(2)}2,
\]
\[
  {\sum_{\alpha\in(\bar\varpi)}}^{\!\prime}\frac{\bar\alpha^2}{|\alpha|^6}
  =\frac{\varpi^2}{|\bar\varpi|^6}
   {\sum_{\gamma\in\OO_K}}^{\!\prime}\frac{\bar\gamma^2}{|\gamma|^6}
  =\frac{\varpi^2}{8}\cdot2L_3=\frac{\varpi^2L_3}4 ,
\]
so, using $\ReT\varpi^2=-3/2$ from \eqref{eq:pirels},
\[
  T_1(\tau_w)
  =16\Bigl[2\ReT\frac{\varpi^2L_3}4+\frac{\zeta_K(2)}2\Bigr]
  =16\Bigl[-\frac34\,L_3+\frac{\zeta_K(2)}2\Bigr]
  =-12L_3+8\zeta_K(2).
\]
At $\tau_w'$ the same computation with $(\varpi)$ in place of $(\bar\varpi)$
gives $\ReT\bar\varpi^2=-3/2$ --- the same value; hence
$T_d(\tau_w')=T_d(\tau_w)$ exactly.
\end{proof}

\begin{theorem}[CM evaluation at $\tau_w$]\label{thm:P1B}
\[
  \mathrm{EK}(\tau_w)=\frac47\,\bigl(54M_7+d_7\bigr).
\]
\end{theorem}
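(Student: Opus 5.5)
The plan is a direct substitution, parallel to the proof of Theorem~\ref{thm:P1A}: combine the lattice form of $\mathrm{EK}$ (Lemma~\ref{lem:latticeEK}) with the exact $T$-values at $\tau_w$ (Proposition~\ref{prop:TevalB}), then translate $L(g_7,3)$ and $\zeta_K(2)$ into $M_7$ and $d_7$ via Lemma~\ref{lem:FE}. All the arithmetic input (the lattice identifications \eqref{eq:L4B}, \eqref{eq:L1B} and the imprimitive sums of Proposition~\ref{prop:BG}) has already been supplied, so no analytic step remains.

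\textbf{Step 1: the $T$-combination.} Write $L_3=L(g_7,3)$. From \eqref{eq:TevalB},
\[
  -T_1(\tau_w)+16T_4(\tau_w)=(12L_3-8\zeta_K(2))+(96L_3+20\zeta_K(2))=108L_3+12\zeta_K(2).
\]
Unlike at $\tau_0$, the $\zeta_K(2)$-terms do not cancel; this residual is what will become the Dirichlet term.

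\textbf{Step 2: apply Lemma~\ref{lem:latticeEK}.} Using $\ImT\tau_w=\sqrt7/4$ from \eqref{eq:ims},
\[
  \mathrm{EK}(\tau_w)=\frac{2\cdot\sqrt7/4}{\pi^3}\bigl(108L_3+12\zeta_K(2)\bigr)=\frac{\sqrt7}{2\pi^3}\bigl(108L_3+12\zeta_K(2)\bigr).
\]

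\textbf{Step 3: convert to $M_7$ and $d_7$.} By \eqref{eq:FE}, $L_3=\frac{4\pi^3}{7\sqrt7}M_7$, so the first term equals $\frac{216}{7}M_7=\frac47\cdot54M_7$. By \eqref{eq:zetaKd7}, $\zeta_K(2)=\frac{2\pi^3}{21\sqrt7}d_7$, so the second term equals $\frac{12}{21}d_7=\frac47 d_7$. Adding the two gives $\mathrm{EK}(\tau_w)=\frac47(54M_7+d_7)$.

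There is no real obstacle, since the substantive work is upstream. The only points needing care are bookkeeping ones:
\begin{itemize}
\item the sign of $\ReT\varpi^2=-3/2$ in $T_1$, which produces the $+12L_3$ in Step~1;
\item the normalizations in Lemma~\ref{lem:FE} relating $\zeta_K(2)$ to $d_7$.
\end{itemize}
I would cross-check the final constant numerically against the value $4.13826\ldots$ quoted in the introduction. The same identity holds at $\tau_w'$, since $T_d(\tau_w')=T_d(\tau_w)$ and $\ImT\tau_w'=\ImT\tau_w$.
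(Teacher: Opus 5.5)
Your proposal is correct and follows the paper's proof exactly. It forms $-T_1(\tau_w)+16T_4(\tau_w)=108L_3+12\zeta_K(2)$ from Proposition~\ref{prop:TevalB}, applies Lemma~\ref{lem:latticeEK} with $\ImT\tau_w=\sqrt7/4$, and converts via \eqref{eq:FE} and \eqref{eq:zetaKd7} to $\frac{216}{7}M_7+\frac47 d_7$, with the arithmetic checking out at each step.
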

\begin{proof}
Combining \eqref{eq:TevalB},
\begin{equation}\label{eq:combinationB}
  -T_1(\tau_w)+16T_4(\tau_w)
  =(12+96)L_3+\Bigl(-8+20\Bigr)\zeta_K(2)
  =108L_3+12\zeta_K(2).
\end{equation}
Note the coefficient of $\zeta_K(2)$ is $12\neq0$: unlike the $k=1$ case,
the $\zeta_K(2)$-terms do \emph{not} cancel --- this is precisely where
the $d_7$-term of the final formula comes from. With
$\ImT\tau_w=\sqrt7/4$, Lemma~\ref{lem:latticeEK} gives
\begin{align*}
  \mathrm{EK}(\tau_w)
  &=\frac{\sqrt7}{2\pi^3}\Bigl(108L(g_7,3)+12\zeta_K(2)\Bigr)\\
  &=\frac{\sqrt7}{2\pi^3}\cdot108\cdot\frac{4\pi^3}{7\sqrt7}\,M_7
   \;+\;\frac{\sqrt7}{2\pi^3}\cdot12\cdot\frac{2\pi^3}{21\sqrt7}\,d_7
   \qquad\bigl[\text{by \eqref{eq:FE} and \eqref{eq:zetaKd7}}\bigr]\\
  &=\frac{216}{7}\,M_7+\frac47\,d_7
   =\frac47\Bigl(54M_7+d_7\Bigr). \qedhere
\end{align*}
\end{proof}

\subsection{Conclusion}

\begin{proof}[Proof of Theorem B]
By \eqref{eq:ims}, $\ImT\tau_w=\sqrt7/4\ge1/2$, so Samart's identity
\eqref{eq:samart} applies \emph{directly} at $\tau_w$ --- no continuation
argument is needed. By Corollary~\ref{cor:s2valB} and
Theorem~\ref{thm:P1B},
\[
  n_2(k_+)=n_2\bigl(s_2(\tau_w)\bigr)=\mathrm{EK}(\tau_w)
  =\frac47\bigl(54M_7+d_7\bigr).
\]
Likewise $n_2(k_-)=n_2\bigl(s_2(\tau_w')\bigr)=\mathrm{EK}(\tau_w')
=\mathrm{EK}(\tau_w)$ (Proposition~\ref{prop:TevalB}), or simply
$n_2(k_-)=n_2(\overline{k_+})=n_2(k_+)$: conjugating the coefficients
of a Laurent polynomial does not change $|P|$ on $\Tt^3$, hence
$\m(\bar P)=\m(P)$, and $f$ has real coefficients.
\end{proof}

\begin{remark}[The wrong-sheet companion at $\tau'$]\label{rem:wrongsheet}
Samart's table \cite{Sa15} attaches the conjecture for $k_\pm$ to the CM
points $\tau'$, $\tau''$, whose imaginary part $\sqrt7/8$ is \emph{below}
$1/2$; Theorem~\ref{thm:samart} does not apply there, and indeed the
identity \emph{fails} at $\tau'$. Numerically (60 dps,
\texttt{diag\_ek\_branch.py} and check [WS] of
\texttt{verify\_P1\_n2pair.py}), direct Poisson-row evaluation of the
lattice sums at $\tau'$ (where $\cL_4(\tau')=\Zz+\Zz\varpi=\OO_K$ but
$\cL_1(\tau')=\varpi\OO_2/4$ is an $\OO_2$-module \emph{not} homothetic to an
$\OO_K$-ideal) gives
\[
  T_4(\tau')=4L(g_7,3)+2\zeta_K(2),\qquad
  T_1(\tau')=-288L(g_7,3)+80\zeta_K(2),
\]
hence
\[
  \mathrm{EK}(\tau')
  =\frac{\sqrt7}{4\pi^3}\Bigl(352L(g_7,3)-48\zeta_K(2)\Bigr)
  =\frac87\Bigl(44M_7-d_7\Bigr)
  =3.22268\ldots
  \;\neq\; 4.13826\ldots=n_2\bigl(s_2(\tau')\bigr),
\]
a difference of $0.91558\ldots$. The function $\mathrm{EK}$ is
single-valued real-analytic on all of $\Hh$, while
$\tau\mapsto n_2(s_2(\tau))$ is real-analytic only on
$V=\{\tau:s_2(\tau)\notin[0,64]\}$; numerical scans
(\texttt{diag\_ek\_branch.py}) show that paths from the imaginary axis to
$\tau'$ cross the locus $\{\ImT s_2=0,\ 0<\ReT s_2<64\}$ (at
$\ReT s_2\approx46.98$) --- the pullback of the branch cut $[0,64]$ of the
Rogers branch \eqref{eq:rogers} --- after which $\mathrm{EK}$ continues on
a companion sheet whose value is exactly $(8/7)(44M_7-d_7)$. This is the
concrete sense in which $\m=\ReT\mt$ fails to be automatic at interior
parameters, and it is precisely the obstruction that the certified-path
continuation of Theorem A overcomes: the $s_2$-image of the path of
Proposition~\ref{prop:path} is certified to avoid $[0,64]$ entirely ---
through $\ReT s_2>64$ on the axis piece, and through $\ImT s_2<0$ on the
arc, horizontal, vertical, and cap pieces.

The same mechanism is even more striking in Samart's $n_4$-family
$n_4(s)=4\m\bigl(x^4+y^4+z^4+1+s^{1/4}xyz\bigr)$: at $s=81$ the
conjectured value $40M_7$ \cite[Table~6]{Sa15} coincides with the exact
series evaluation at the attached CM point on a wrong sheet, while
direct torus integration gives $n_4(81)=4.1655349907533676508(5)$, so
the literal identity is numerically refuted; see the companion note
\cite{Zfn4} for the exact series-side evaluation and the obstruction
analysis.
\end{remark}

\section{Machine certification and reproducibility}\label{sec:cert}

The non-elementary computations of this paper are certified by interval
arithmetic. The object entering the proofs is not a program but a
\emph{certificate}: a finite list of parameter blocks, each carrying a
strict complex-interval enclosure of $s_2$ on the whole block, whose
validity is checked by deterministic, finite, and independently
re-runnable interval evaluations. The trusted base of the certification
consists of exactly three items: (i) the tail bound of
Lemma~\ref{lem:tail}, a purely mathematical statement; (ii) the
documented outward-rounding semantics of the interval arithmetic employed
(\texttt{mpmath.iv} rounds all interval endpoints outward, with guard
digits for the complex transcendental functions
\cite{mpmath,ivdoc}); and (iii) the certificate data itself, emitted by
the scripts in machine-readable form (\texttt{certificate\_k064.json})
and re-checked by the separate verifier \texttt{cert2\_verify.py}. The
dependence on (ii) is explicit and replaceable: the frozen certificate
can be re-executed under any interval implementation with the same
enclosure semantics, and no other part of the paper depends on the
library. Two implementation safeguards
support (ii) and (iii), without belonging to the trusted base themselves.
First, the certificate path of \texttt{cert2\_path\_k064.py} (and of its
companion \texttt{cert2\_path.py}) contains no machine-\texttt{float}
arithmetic: every bound is produced by outward-rounded interval
operations, and all constants entering the certificates ($\sqrt7/8$,
$1/64000$, $\pi/2$, \ldots) are themselves enclosed by intervals. Second,
each script runs a self-test in which independent $80$-dps high-precision
point values are verified to lie inside the certified block enclosures
($20/20$ sample points contained; point-enclosures contained in
block-enclosures $5/5$) --- an empirical corroboration of (ii), not a
premise of the proof. The operative comparison throughout is
\emph{certified enclosure width versus certified distance to the
forbidden set $[0,64]$}, never a count of stable decimal digits. All
scripts are plain Python 3.12 + \texttt{mpmath} \cite{mpmath} (standard
library \texttt{fractions} for the exact algebra), run independently of
one another.

\subsection{Rigorous enclosure of \texorpdfstring{$s_2$}{s2}}

For $\tau$ in a (rectangular complex) interval, enclose
$q=\e^{2\pi \ii\tau}$ by intervals, set $r:=\overline{|q|}$ (a certified
upper bound), truncate the product \eqref{eq:s2prod} at $N=120$, and attach
a rigorous tail factor. The logical basis for trusting such an evaluation
is the inclusion property of interval arithmetic \cite{MoKC}, which we
record explicitly:

\begin{lemma}[Inclusion property]\label{lem:incl}
Let $E$ be any expression built from interval-enclosed constants, the
variable, interval arithmetic operations, and interval elementary
functions, all with outward-rounded semantics. Then for every interval
box $I$ in its domain, the interval evaluation $E(I)$ contains the true
value $E(\tau)$ for every $\tau\in I$.
\end{lemma}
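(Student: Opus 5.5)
The plan is a structural induction on the expression $E$, which is the standard route for the fundamental theorem of interval arithmetic. We regard $E$ as a finite syntax tree. Its leaves are interval-enclosed constants or the variable $\tau$, and its internal nodes are either arithmetic operations ($+,-,\times,\div$) or elementary functions ($\exp$, $\log$, powers, trigonometric functions, and so on). The inductive claim is stronger than the statement itself: for every subexpression $E'$ of $E$ and every $\tau\in I$, the true value $E'(\tau)$ lies in the interval $E'(I)$ computed by the machine. Applied to the root, this gives the lemma.

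\textbf{Base cases.} For a leaf equal to $\tau$, the machine interval is $I$ itself, which contains $\tau$. For a leaf equal to a constant such as $\pi$, $\sqrt7/8$ or $1/64000$, the hypothesis says the constant was entered as an outward-rounded enclosure, so its true value lies in the stored interval.

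\textbf{Inductive step.} Let $\circ$ be a binary operation and suppose $a=E_1(\tau)\in A=E_1(I)$ and $b=E_2(\tau)\in B=E_2(I)$. The exact interval extension $\{x\circ y:x\in A,\ y\in B\}$ contains $a\circ b$. For real intervals this set is given by the endpoint formulas of Moore. For complex rectangles one uses the real and imaginary parts, which are real polynomial or rational expressions, so the same inclusion holds component by component. Outward rounding then only enlarges this set, so $a\circ b\in E(I)$. For division we need $0\notin B$, which is exactly the condition that $I$ lies in the domain of $E$. For a unary elementary function $\phi$, the documented semantics of the library state that the returned interval contains $\phi(A)=\{\phi(x):x\in A\}$. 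Hence $\phi(a)\in\phi(A)\subseteq E(I)$. Induction over the tree finishes the argument.

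\textbf{Main obstacle.} The only real difficulty is the elementary-function step for complex arguments. There the inclusion $\phi(A)\subseteq$ output is not something we derive; it is the library's contract, item (ii) of the trusted base described above. I would therefore state it explicitly as an assumption on the primitives, citing \cite{mpmath,ivdoc,MoKC}, rather than try to prove it. I would also note that the domain hypothesis (no division by an interval containing $0$, and the branch of $\log$ being valid on the whole box) has to hold at every node, not just at the top. Note that the dependency problem, where the same variable occurs more than once, only causes overestimation and never breaks inclusion, so it needs no special treatment.
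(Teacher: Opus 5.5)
Your proposal is correct and is essentially the paper's proof: a structural induction on $E$, with the constants and the variable as base cases, and at each node the outward-rounded operation (item (ii) of the trusted base) returns a box containing the operation applied to every point of the input boxes, hence to the true intermediate value. Your extra remarks make explicit what the paper leaves implicit: complex arithmetic reduced to real components, and the domain condition holding at every node of the interval evaluation rather than pointwise. The latter is the right reading of your phrase that $0\notin B$ is exactly the domain condition.
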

\begin{proof}
Induction on the structure of $E$. Constants and the variable are
enclosed by definition (every constant such as $\sqrt7/8$, $\pi$, $1/16$
enters as an enclosing interval, and $\tau\in I$). If
$E=g(E_1,\dots,E_r)$ and $E_i(\tau)\in E_i(I)$ by induction, then the
outward-rounded interval operation $g$ applied to the boxes $E_i(I)$
returns a box containing $g(z_1,\dots,z_r)$ for all $z_i\in E_i(I)$, in
particular containing $E(\tau)$. (This is the fundamental theorem of
interval analysis; cf.\ \cite{MoKC}.)
\end{proof}

Lemma~\ref{lem:incl} reduces the correctness of any single interval
\emph{evaluation} to the outward-rounding semantics of the individual
operations, i.e.\ to item (ii) of the trusted base. The remaining
ingredient is the truncation error, controlled mathematically:

\begin{lemma}[Tail bound]\label{lem:tail}
Let $r:=|q|$ and suppose $r<1$. Write $\log(1+z)$ for the branch on
$|z|<1$ normalized by $\log 1=0$. Then
\[
  \prod_{n>N}\bigl(1+q^{2n-1}\bigr)^{24}
  \in \exp\bigl(\{z\in\Cc:|z|\le E\}\bigr)
  \subset\bigl\{w\in\Cc:\e^{-E}\le|w|\le\e^{E}\bigr\},\qquad
  E:=\frac{24\,r^{2N+1}}{(1-r)(1-r^2)}.
\]
\end{lemma}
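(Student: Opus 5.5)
The plan is to take the principal logarithm of each factor and sum the power series termwise. For $n>N$ we have $|q^{2n-1}|=r^{2n-1}\le r^{2N+1}<1$, so $\log(1+q^{2n-1})$ is defined on the principal branch, and $(1+q^{2n-1})^{24}=\exp\bigl(24\log(1+q^{2n-1})\bigr)$. The tail product then equals $\exp(S)$ with
\[
  S:=24\sum_{n>N}\log\bigl(1+q^{2n-1}\bigr).
\]
This identity holds because $\exp$ turns sums into products. The infinite product and the series converge absolutely, so passing to the limit of the partial products is legitimate by continuity of $\exp$.

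Next I bound $|S|$ using $|\log(1+z)|=\bigl|\sum_{k\ge1}(-1)^{k-1}z^k/k\bigr|\le\sum_{k\ge1}|z|^k=|z|/(1-|z|)$ for $|z|<1$. With $z=q^{2n-1}$ and $|z|\le r^{2N+1}\le r$, each term is at most $r^{2n-1}/(1-r)$. Summing the geometric series gives
\[
  |S|\le\frac{24}{1-r}\sum_{n>N}r^{2n-1}=\frac{24}{1-r}\cdot\frac{r^{2N+1}}{1-r^2}=E.
\]
So $S$ lies in the closed disk of radius $E$, which is the first inclusion.

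The second inclusion is immediate. For $|z|\le E$ we have $|\e^z|=\e^{\ReT z}$ and $-E\le\ReT z\le E$, which gives the annulus.

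There is no real obstacle here. The only points needing care are the uniform use of the principal branch, which is justified by $r^{2n-1}<1$, and the crude bound $1/(1-|z|)\le1/(1-r)$. The second is what produces the stated factor $(1-r)$ rather than a sharper one.
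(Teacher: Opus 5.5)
Your proposal is correct and follows the paper's own argument: the principal logarithm of each factor, the estimate $|\log(1+z)|\le|z|/(1-r)$, the geometric sum giving $E$, and $|\e^z|=\e^{\ReT z}$ for the annulus. The paper bounds through $-\log(1-|z|)$ where you drop the $1/k$ in the power series, which is an immaterial difference.
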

\begin{proof}
Since $|q^{2n-1}|\le r<1$, the chosen branch applies and the series
$\sum_{n>N}24\log(1+q^{2n-1})$ converges absolutely, with
$\prod_{n>N}(1+q^{2n-1})^{24}=\exp\bigl(\sum_{n>N}24\log(1+q^{2n-1})\bigr)$.
For $|z|\le r<1$,
$|\log(1+z)|\le-\log(1-|z|)\le |z|/(1-r)$. Summing,
\[
  \Bigl|\sum_{n>N}24\log(1+q^{2n-1})\Bigr|
  \le\frac{24}{1-r}\sum_{n>N}r^{2n-1}
  =\frac{24\,r^{2N+1}}{(1-r)(1-r^2)},
\]
so the sum lies in $\{z:|z|\le E\}$; its exponential therefore lies
in $\exp(\{z:|z|\le E\})$, hence in
$\{w:\e^{-E}\le|w|\le\e^{E}\}$. \qedhere
\end{proof}

On the path of Proposition~\ref{prop:path} the imaginary part is at
least $\sqrt7/8+\varepsilon_0$ on every certified piece (attained at
the bottom of the sub-cap), so
$r\le\e^{-2\pi(\sqrt7/8+\varepsilon_0)}<0.1252$ and with $N=120$ the
tail width satisfies $E<10^{-215}$. The
scripts in fact attach the slightly stronger complex-box tail factor
$\exp\bigl(\{u+\ii v:|u|,|v|\le E\}\bigr)$, which contains the disk
image $\exp(\{z:|z|\le E\})$ of Lemma~\ref{lem:tail} and is itself
contained in the modulus annulus; it thus bounds the argument of the
tail as well as its modulus, with all quantities entering $E$
outward-rounded intervals; this is used both for the path certification
and for the interval locks of Lemmas~\ref{lem:lamvalA}
and~\ref{lem:lamvalB}.

A complex interval $Z$ is \emph{certified to miss} $[0,64]$ iff
$0\notin\ImT Z$ or $\ReT Z\cap[0,64]=\varnothing$. If the enclosure of a
parameter block is certified to miss $[0,64]$, then the entire block lies
in $V$: this is a proof, not an estimate.

\subsection{The certificate and its correctness}\label{subsec:cert}

A \emph{path certificate} is a finite list of parameter blocks
$I_1,\dots,I_n$ covering the path down to the endpoint cap, together with
complex intervals $Z_1,\dots,Z_n$ such that $s_2(I_\nu)\subset Z_\nu$ and
$Z_\nu\cap[0,64]=\varnothing$ for every $\nu$. The certificate for the
path of Proposition~\ref{prop:path} consists of $99$ blocks
($y_0=\sqrt7/8$, $\varepsilon_0=1/64000$):
\begin{center}
\begin{tabular}{lll}
\toprule
piece & parameter range & certified in \\
\midrule
axis       & $x=0$, $y\in[9/16,\,1]$ & 16 blocks \\
quarter arc & $\tau=\tfrac\ii2+\tfrac1{16}\e^{\ii\theta}$,
  $\theta\in[0,\pi/2]$ & 18 blocks \\
horizontal & $x\in[1/16,\,3/8]$, $y=1/2$ & 37 blocks \\
vertical   & $x=3/8$, $y\in[y_0+10^{-3},\,1/2]$ & 15 blocks \\
sub-cap    & $x=3/8$, $y\in[y_0+\varepsilon_0,\,y_0+10^{-3}]$ & 13 blocks \\
cap        & $x=3/8$, $y\in(y_0,\,y_0+\varepsilon_0]$ & analytic,
  see \eqref{eq:cap} \\
\bottomrule
\end{tabular}
\end{center}
The complete list of blocks and enclosures is emitted by
\texttt{cert2\_path\_k064.py} in machine-readable form
(\texttt{certificate\_k064.json}, full-precision decimal strings,
archived together with the code); only the most
dangerous block of each piece is reproduced in the margin table below.
Generator and verifier are separate programs: \texttt{cert2\_verify.py}
reads the frozen certificate, recomputes every enclosure from the block
endpoints, checks disjointness from $[0,64]$ with positive margin,
contiguity and coverage of each piece, and the cap inequality, and
performs no adaptive search.

\begin{proposition}[Correctness of the certificate]\label{prop:cert}
Assume \textup{(i)} the tail bound of Lemma~\ref{lem:tail} and
\textup{(ii)} that every interval operation used returns an enclosure of
the true value \textup{(}the outward-rounding semantics of
\textup{\cite{ivdoc})}. Then each listed block satisfies
\[
  s_2(I_\nu)\subset Z_\nu,
  \qquad Z_\nu\cap[0,64]=\varnothing .
\]
Consequently the covered part of the path lies in $V$.
\end{proposition}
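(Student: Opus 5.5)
The plan is to prove, for a single block $I_\nu$, that the interval the scripts compute is a true enclosure of $s_2$ on that block. The enclosure is a product of two factors, and I will show that each of them encloses the corresponding true factor.

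Fix a block $I=I_\nu$, a rectangular complex interval in $\Hh$, and an arbitrary $\tau\in I$. Using Lemma~\ref{lem:s2prod}, I split
\[
  s_2(\tau)=P_N(\tau)\,R_N(\tau),\qquad
  P_N(\tau)=q^{-1}\prod_{n=1}^{N}\bigl(1+q^{2n-1}\bigr)^{24},\qquad
  R_N(\tau)=\prod_{n>N}\bigl(1+q^{2n-1}\bigr)^{24},
\]
with $N=120$. The factor $P_N$ is a finite expression built from the variable $\tau$, interval-enclosed constants ($2\pi\ii$, the block endpoints), $\exp$, and the field operations. By Lemma~\ref{lem:incl}, which rests on assumption (ii), its interval evaluation $P_N(I)$ therefore contains $P_N(\tau)$.

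For the tail factor, let $\bar r$ be the certified upper bound of $|q|$ over $I$ produced by the interval evaluation. By Lemma~\ref{lem:incl}, $|q(\tau)|\le\bar r$, and the path bound recorded after Lemma~\ref{lem:tail} gives $\bar r<0.1252<1$. The quantity $E(r)=24r^{2N+1}/((1-r)(1-r^2))$ is increasing in $r$ on $[0,1)$, so $E(|q(\tau)|)\le E(\bar r)\le\overline{E}$, where $\overline{E}$ is the outward-rounded interval value of $E(\bar r)$. Assumption (i), Lemma~\ref{lem:tail}, then puts $R_N(\tau)$ in $\exp(\{|z|\le\overline E\})$. This disk image lies inside the box image $\exp(\{u+\ii v:|u|,|v|\le\overline E\})$, and the interval enclosure $T$ of that box image contains it by Lemma~\ref{lem:incl} once more.

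Since interval multiplication is inclusion-monotone, $s_2(\tau)=P_N(\tau)R_N(\tau)\in P_N(I)\cdot T=:Z_\nu$. As $\tau\in I$ was arbitrary, $s_2(I_\nu)\subset Z_\nu$. Disjointness $Z_\nu\cap[0,64]=\varnothing$ is a finite check on the endpoints of $Z_\nu$: the certificate records either $0\notin\ImT Z_\nu$ or $\ReT Z_\nu\cap[0,64]=\varnothing$, and each of these is a comparison of rigorously rounded numbers with positive margin, so it is exact. Combining the two facts, $s_2(\tau)\notin[0,64]$ for every $\tau\in I_\nu$, so $I_\nu\subset V$. Taking the union over the $99$ blocks, whose coverage and contiguity the verifier checks, shows that the covered part of the path lies in $V$.

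The main obstacle is the monotonicity step, passing from the pointwise modulus $|q(\tau)|$ to the single bound $\bar r$ used for the tail. Two things must be made explicit: that the tail bound of Lemma~\ref{lem:tail} is applied with an upper bound $r\ge|q|$, which is valid because its proof uses only $|q^{2n-1}|\le r<1$; and that $\bar r<1$ holds uniformly on every block. Everything else is a direct application of Lemma~\ref{lem:incl}.
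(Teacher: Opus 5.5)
Your proof is correct and follows the paper's argument: Lemma~\ref{lem:incl} encloses the truncated product, Lemma~\ref{lem:tail} handles the tail through the complex-box factor that contains the disk image, the product is enclosed by inclusion-monotonicity, and disjointness is decided by the endpoint-based certified-miss predicate. Your explicit monotonicity step for $E(r)$ (equivalently, your remark that the tail bound holds for any $r\ge|q|$ with $r<1$) spells out a detail the paper leaves implicit when it inserts the certified upper bound $\overline{|q|}$ into the tail estimate.
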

\begin{proof}
By Lemma~\ref{lem:incl} (applicable by (ii)), the interval evaluation of
$q$, of each truncated factor $(1+q^{2n-1})^{24}$, and of their product
returns an enclosure of the corresponding true value on $I_\nu$; by (i),
multiplication by the complex-box tail factor enlarges this to an
enclosure $Z_\nu$ of the full infinite product $s_2$ on $I_\nu$. The disjointness
$Z_\nu\cap[0,64]=\varnothing$ is decided from the endpoints of $Z_\nu$ by
the certified-miss predicate of the preceding subsection
($0\notin\ImT Z_\nu$ or $\ReT Z_\nu\cap[0,64]=\varnothing$), again using
only (ii). The verification of the certificate is therefore a
deterministic finite computation, independent of how the blocks were
found: the \texttt{PASSED} output of \texttt{cert2\_path\_k064.py} is one
execution of it, \texttt{cert2\_verify.py} re-executes it on the frozen
block list, and any independent outward-rounded interval
implementation re-executing the same block list obtains the same
verdicts.
\end{proof}

The endpoint cap is \emph{not} part of the block certificate; it is the
analytic supplement of Section~\ref{sec:thmA}, closed by the Taylor
argument of \eqref{eq:cap}, which uses the \emph{exact} value
$s_2(\tau_0)=1$, the Cauchy bound $M_2=190742$ derived from the certified
bound $M_0=38.148391199402133945$ on a $0.03$-box, and the certified
enclosure $\ReT s_2'(\tau_0)\in[-18.0197947489675,-15.0394516865142]$.

For each piece, the most dangerous accepted block (the one whose enclosure
comes closest to $[0,64]$) is:
\begin{center}\footnotesize
\begin{tabular}{llll}
\toprule
piece & worst enclosure $Z$ & width & $\operatorname{dist}(Z,[0,64])$ \\
\midrule
axis       & $\ReT\in[64.65,75.59]$, $\ImT\ni0$ & $10.94$ & $0.648$ \\
quarter arc & $\ReT\in[55.02,66.03]$, $\ImT\subset[-5.098,-0.069]$ &
  $11.01$ & $0.0690$ \\
horizontal & $\ReT\in[58.62,60.09]$, $\ImT\subset[-1.094,-0.0461]$ &
  $1.471$ & $0.0461$ \\
vertical   & $\ReT\in[1.0016,1.1115]$, $\ImT\subset[-0.0983,-0.0051]$ &
  $0.11$ & $5.08\times10^{-3}$ \\
sub-cap    & $\ReT\in[1.00005,1.00067]$,
  $\ImT\subset[-5.836\times10^{-4},-6.009\times10^{-5}]$ &
  $6.2\times10^{-4}$ & $6.009\times10^{-5}$ \\
\bottomrule
\end{tabular}
\end{center}
On the axis piece the separation is one-sided through $\ReT Z>64$ (the
values are real there); on the arc, horizontal, vertical, and sub-cap
pieces it is through $\ImT Z<0$ (the $s_2$-image passes below the cut).
The global minimum margin over all $99$ blocks is
$6.009249\times10^{-5}>0$ (attained on the sub-cap), while the widest
enclosure (axis piece, width $1022$) stays at distance $77.7$ from
$[0,64]$.

\begin{remark}[Generation and termination of the search]\label{rem:termin}
The block list was found by adaptive bisection: a block is accepted when
its enclosure is certified to miss $[0,64]$, otherwise it is bisected.
This search is not part of the trusted base
(Proposition~\ref{prop:cert}), but it is worth recording why it had to
terminate. By the standard contraction principle for interval extensions
\cite{MoKC}, the natural interval extension of an expression built from
operations that are Lipschitz and non-singular on a box has width
$O(\operatorname{diam})$. The expression used here ---
$q=\e^{2\pi \ii\tau}$, the factors $(1+q^{2n-1})^{24}$ with
$|q|<0.1252<1$ and $|1+q^{2n-1}|\ge1-|q|>0$ uniformly, and the
tail factor of Lemma~\ref{lem:tail} (width $<10^{-215}$ uniformly) ---
satisfies these hypotheses on every piece, so enclosure widths shrink
linearly with the block diameter. On each compact piece the true image
has positive distance to the forbidden set: through $\ImT s_2<0$ on the
arc, horizontal, vertical, and sub-cap pieces, and through the one-sided
separation $\ReT s_2>64$ on the axis piece, where the values are real.
Once the block diameter is small enough that the enclosure width is below
that margin, the block is accepted; bisection therefore accepts after
finitely many steps.
\end{remark}

\begin{remark}[Relation to the companion script]
The companion script \texttt{cert2\_path.py} certifies the shorter path
$\ii/2\to3/8+\ii/2\to\tau_0$ with the avoided set $[-8,8]$ in place of
$[0,64]$ (5, 15, and 13 blocks respectively, plus the same cap analysis,
with identical machinery and the same global minimum margin
$6.009249\times10^{-5}$). The formulation of Section~\ref{sec:machine}
requires avoidance of the $k$-space cut $[0,64]$ (Remark~\ref{rem:cuts})
on a path extended to the point $\ii$ (where $|s_2|>261>64$ elementarily,
Lemma~\ref{lem:disk}); this strengthened certification is carried out by
\texttt{cert2\_path\_k064.py} and yields the block counts in the table
above.
\end{remark}

\subsection{Exactness certificates}

The exactness of $s_2(\tau_0)=1$ and of
$s_2(\tau')=(47+45\sqrt{-7})/2$ is certified by
\texttt{cert0\_s2\_eq\_1.py} and \texttt{cert0\_s2\_n2pair.py}.
Each lock below follows the same three-step pattern: (i)~purely
algebraic or complex-multiplication input places the target value in an
explicit discrete set (the integers $\Zz$, or the roots of an exact
sextic over $\Qq$); (ii)~a strict separation bound for that set is known
($1$ for $\Zz$; $3.75$ and $3\sqrt7/16$ for the sextic roots);
(iii)~an interval enclosure of the target, with rigorous tails and
radius smaller than half the separation, singles out a unique element.
No exact equality is ever inferred from a floating-point approximation
alone. Concretely:
\begin{itemize}
\item the formal product identity \eqref{eq:s2lambda} is re-checked as an
  exact integer $q$-series identity to order $q^{37}$ (rational
  arithmetic);
\item the sextic factorizations \eqref{eq:sextic} and \eqref{eq:sexticB}
  are verified by exact polynomial division over $\Qq$;
\item $j(2\tau_0)$ and $j(2\tau')$ are enclosed by intervals with
  rigorous tails: $|j(2\tau_0)+3375|\le1.8\times10^{-91}$ and
  $|j(2\tau')+3375|\le1.8\times10^{-91}$, pinning the integers;
\item the $\lambda$-values are locked by interval evaluation against the
  root separation: $|\lambda(2\tau_0)-(1+3\sqrt{-7})/2|\le
  6.33\times10^{-95}$ with
  separation $3.75$, and
  $|\lambda(2\tau')-(31+3\sqrt{-7})/32|\le1.2\times10^{-62}$ with
  separation $3\sqrt7/16=0.4960\ldots$.
\end{itemize}

\subsection{Script inventory}\label{subsec:scripts}

\begin{center}\footnotesize
\begin{tabular}{lp{9.2cm}}
\toprule
script & content \\
\midrule
\texttt{cert0\_s2\_eq\_1.py} & exactness $s_2(\tau_0)=1$ (series check,
  sextic factorization, $j$ and $\lambda$ locks) \\
\texttt{cert2\_path\_k064.py} & path certification for Theorem A
  (cut $[0,64]$; self-test $20/20$; emits
  \texttt{certificate\_k064.json}) \\
\texttt{s2\_iv.py} & shared interval-evaluation core for $s_2$
  (constants, truncation, rigorous tail box) \\
\texttt{cert2\_verify.py} & independent verifier: re-checks the frozen
  certificate \texttt{certificate\_k064.json}, no adaptive search \\
\texttt{cert2\_path.py} & companion: shorter path, avoided set $[-8,8]$ \\
\texttt{verify\_P1.py} & CM evaluation at $\tau_0$: theta identity to $q^{60}$
  (exact integers), $T_1,T_4$ by Poisson rows \\
\texttt{lvalue\_g7.py} & $L'(g_7,0)$ to 50 digits; functional-equation
  self-check to 52 digits \\
\texttt{mahler\_m.py} & true $\m(f+1)$ to 40 digits by direct torus
  integration \\
\texttt{find\_tau0.py} & $\tau_0$, $s_2(\tau_0)$, $\mathrm{EK}(\tau_0)$
  to 60 digits \\
\texttt{samart\_ek.py} & EK formula; three-way validation at $k=64$ \\
\texttt{cert0\_s2\_n2pair.py} & exactness $s_2(\tau')=(47+45\sqrt{-7})/2$
  and conjugate (100 dps locks) \\
\texttt{verify\_P1\_n2pair.py} & CM evaluation at $\tau_w$: 43 checks, all pass
  (60 dps, worst $|{\rm diff}|=6.9\times10^{-52}$) \\
\texttt{verify\_n2\_pair.py} & three-way numerical confirmation of
  Theorem B (50 dps) \\
\texttt{diag\_ek\_branch.py} & wrong-sheet diagnostic of
  Remark~\ref{rem:wrongsheet} \\
\bottomrule
\end{tabular}
\end{center}

All computations use \texttt{mpmath} interval arithmetic at 40--100 decimal
digits. The certification scripts (\texttt{cert0\_*}, \texttt{cert2\_*},
\texttt{verify\_P1*}) print \texttt{PASS}/\texttt{FAIL} per check and
terminate with an all-checks-passed line; the remaining scripts print
numerical values used for cross-checks.

\paragraph{Code and certificate availability.}
All scripts listed above are available in the public repository
\url{https://github.com/huiminZheng-collab/samart-mahler}, archived at
\url{https://doi.org/10.5281/zenodo.21711884}, and from the author
upon request. The complete machine-readable certificate --- every block
enclosure, margin, and cap constant --- is printed by
\texttt{cert2\_path\_k064.py} and archived in the repository under
\texttt{cert/output/}; re-running the scripts regenerates it from
scratch, so no part of it is embedded only in this text.

\section{Numerical confirmations}\label{sec:numerics}

\subsection{Theorem A}

The three quantities
\[
  2\m(f+1)\ \text{(direct torus integration)},\qquad
  \mathrm{EK}(\tau_0)\ \text{(Poisson-summed series)},
\]
and $8L'(g_7,0)$ (from the functional equation)
agree pairwise to at least 41 digits (\texttt{mahler\_m.py},
\texttt{find\_tau0.py}, \texttt{lvalue\_g7.py}):
\begin{align*}
  \m(f+1)&=0.4106864311156080448418263795931716559956\ldots,\\
  4L'(g_7,0)&=0.4106864311156080448418263795931716559956\ldots
   \qquad(\text{difference }6\times10^{-42}),\\
  \mathrm{EK}(\tau_0)&=8L'(g_7,0)\\
  &=0.8213728622312160896836527591863433119911586\ldots
\end{align*}
As a control, the axis identity at $k=64$ was validated three ways (via
$\mathrm{EK}$, via direct integration, and via $8L'(\eta(4\tau)^6,0)$) to
50 digits (\texttt{samart\_ek.py}).

\subsection{Theorem B}

The three quantities
\[
  n_2(k_+)\ (\text{direct torus integration}),\qquad
  \mathrm{EK}(\tau_w),\qquad
  \tfrac47\bigl(54M_7+d_7\bigr)
\]
agree to $\sim10^{-50}$ (\texttt{verify\_n2\_pair.py},
\texttt{verify\_P1\_n2pair.py}):
\[
  n_2\Bigl(\frac{47\pm45\sqrt{-7}}2\Bigr)
  =\frac47\bigl(54M_7+d_7\bigr)
  =4.13826815832141945836337668037433080930698376\ldots
\]
The Rogers hypergeometric form \eqref{eq:rogers} of $n_2$ (convergent at
$|64/k_\pm|=1$) independently gives the same value to $10^{-60}$. The
wrong-sheet value of Remark~\ref{rem:wrongsheet},
$\mathrm{EK}(\tau')=(8/7)(44M_7-d_7)=3.22268\ldots$, differs from
$n_2(k_+)$ by $0.91558\ldots$.

\subsection{Reference values}

\begin{align*}
  L'(g_7,0)&=0.10267160777890201121045659489829291399889482708922\ldots\\
  L(g_7,3)&=0.6875636561025306425038995842156425788871541\ldots\\
  d_7=L'(\chi_{-7},-1)&=1.69770245700177544677125306614726156034690092\ldots\\
  L(\chi_{-7},2)&=1.1519254705444910471\ldots\\
  \zeta_K(2)&=1.8948414489688065289713480740538331491752358\ldots
\end{align*}

\appendix

\section{Exact polynomials and expansions}\label{app:poly}

The sextic of \S\ref{subsec:exactA} and \S\ref{subsec:exactB}:
\[
  \Phi(X)=256(X^2-X+1)^3+3375X^2(1-X)^2,
\]
i.e.\
\[
  \Phi(X)=256X^6-768X^5+4911X^4-8542X^3+4911X^2-768X+256
\]
(note the palindromic coefficients) factors over $\Qq$ as
\[
  \Phi(X)=(X^2-X+16)\,(256X^4-512X^3+303X^2-47X+16),
\]
and splits completely over $\Qq(\sqrt{-7})$ into three quadratic factors:
\[
  \Phi(X)=(X^2-X+16)\,(16X^2-X+1)\,(16X^2-31X+16),
\]
with roots
\[
  \frac{1\pm3\sqrt{-7}}2\in X^2-X+16,\qquad
  \frac{1\pm3\sqrt{-7}}{32}\in 16X^2-X+1,\qquad
  \frac{31\pm3\sqrt{-7}}{32}\in 16X^2-31X+16 .
\]
The roots of the last two factors are approximately
\[
  0.96875\pm0.2480391854123053678595265\,\ii,\qquad
  0.03125\pm0.2480391854123053678595265\,\ii
\]
(80-digit \texttt{polyroots}). The two locks used in the text:
\begin{itemize}
\item $\lambda_0=(1+3\sqrt{-7})/2=0.5+3.968626966596885885752424\,\ii$
  (Theorem A): minimum distance $3.75$ to the other roots of $\Phi$;
\item $\lambda_1=(31+3\sqrt{-7})/32$ (Theorem B): minimum distance
  $3\sqrt7/16=0.4960784\ldots$ to the other roots, attained at its
  conjugate $(31-3\sqrt{-7})/32$.
\end{itemize}
Expansion of $s_2$ (Lemma~\ref{lem:s2prod}):
\[
  s_2=q^{-1}+24+276q+2048q^2+11202q^3+49152q^4+184024q^5+614400q^6+1881471q^7+\cdots
\]
Expansion of $\lambda(2\tau)$:
\[
  \lambda(2\tau)=16q-128q^2+704q^3-3072q^4+11488q^5-38400q^6+117632q^7-335872q^8+\cdots
\]

\section{The theta identity \texorpdfstring{\eqref{eq:theta}}{(theta)}}\label{app:theta}

Both sides of \eqref{eq:theta} lie in $S_3(\Gamma_0(7),\chi_{-7})$: the left
side is the eta product $\eta(\tau)^3\eta(7\tau)^3$ (standard eta-product
criteria); the right side is the theta series of the Hecke
Gr\"o\ss encharakter $\psi((\alpha)):=\alpha^2$ of $K=\Qq(\sqrt{-7})$
(conductor $(1)$, type $(2,0)$), which by Hecke's theorem is a cusp form of
weight $3$, level $|\mathrm{disc}\,K|=7$ and nebentypus $\chi_{-7}$.

The Sturm bound for weight $3$ and level $7$ is
$\lfloor 3\cdot[\mathrm{SL}_2(\Zz):\Gamma_0(7)]/12\rfloor=\lfloor 3\cdot8/12\rfloor=2$,
so equality of the coefficients of $q^1$ and $q^2$ already proves
\eqref{eq:theta}. \texttt{verify\_P1.py} checks the first 60 coefficients
\emph{exactly} as integers: the coefficient of $q^N$ on the right is
\[
  \frac12\sum_{\substack{m,n\in\Zz\\m^2+mn+2n^2=N}}\bigl((m^2-2n^2)+(2mn+n^2)\varpi\bigr),
\]
whose $\varpi$-component vanishes for every $N$ (pairing $\alpha$ with its
conjugate) and whose rational component reproduces \eqref{eq:jacobi}.

Alternatively, $\dim S_3(\Gamma_0(7),\chi_{-7})=1$ (there are no oldforms
since $S_3(\mathrm{SL}_2(\Zz))=0$, and the single newform orbit is
\textsf{7.3.b.a} \cite{LMFDB}), and both sides are normalized ($a_1=1$:
from the Jacobi expansion on the left, and on the right from the two
terms $\alpha=\pm1$, whose contributions $\tfrac12\cdot 1^2 q^1$ sum to
$q$).


\begin{thebibliography}{99}

\bibitem{Sa13}
D.~Samart,
\emph{Three-variable Mahler measures and special values of modular and
Dirichlet $L$-series},
Ramanujan J. \textbf{32} (2013), no.~2, 245--268.
arXiv:1205.4803. (Numbering of displayed results refers to the arXiv
version.)

\bibitem{Zfn4}
H.~Zheng,
\emph{Samart's conjecture $n_4(81)=40M_7$: the exact CM evaluation and
the two obstructions. A status report},
companion note, 2026; available at
\url{https://doi.org/10.5281/zenodo.21711884}.

\bibitem{Sa15}
D.~Samart,
\emph{Mahler measures as linear combinations of $L$-values of multiple
modular forms},
Canad. J. Math. \textbf{67} (2015), no.~2, 424--449.
arXiv:1303.6376.

\bibitem{Fe}
J.~Fei,
\emph{Mahler measure of 3D Landau--Ginzburg potentials},
Forum Math. \textbf{33} (2021), no.~5, 1369--1401.
arXiv:2009.09701.

\bibitem{ZGQ}
H.~Zheng, X.~Guo, and H.~Qin,
\emph{The Mahler measure of $(x+1/x)(y+1/y)(z+1/z)+\sqrt{k}$},
Electron. Res. Arch. \textbf{28} (2020), no.~1, 103--125.

\bibitem{GPQ}
X.~Guo, Y.~Peng, and H.~Qin,
\emph{Three-variable Mahler measures and special values of $L$-functions
of modular forms},
Ramanujan J. \textbf{54} (2021), no.~1, 147--175.

\bibitem{HY}
Q.~He and D.~Ye,
\emph{On conjectures of Samart},
Manuscripta Math. \textbf{167} (2022), no.~3--4, 545--588.

\bibitem{Ro}
M.~Rogers,
\emph{Hypergeometric formulas for lattice sums and Mahler measures},
Int. Math. Res. Not. IMRN (2011), no.~17, 4027--4058.

\bibitem{RV}
F.~Rodriguez Villegas,
\emph{Modular Mahler measures, I},
in: Topics in Number Theory (University Park, PA, 1997),
Math.\ Appl.\ \textbf{467}, Kluwer, Dordrecht, 1999, 17--48.

\bibitem{We}
H.~Weber,
\emph{Lehrbuch der Algebra}, Bd.~III, F.~Vieweg \& Sohn, Braunschweig,
1908. (Class invariants; the discriminant $-7$ value $\gamma_2=-15$.)

\bibitem{BM}
J.~Brillhart and P.~Morton,
\emph{Table errata: Lehrbuch der Algebra, Vol.~3, 3rd ed., by Heinrich
Weber}, Math. Comp. \textbf{65} (1996), no.~215, 1379.

\bibitem{Cox}
D.~A.~Cox,
\emph{Primes of the Form $x^2+ny^2$: Fermat, Class Field Theory, and
Complex Multiplication}, 2nd ed., Wiley, 2013.

\bibitem{GZ}
M.~L.~Glasser and I.~J.~Zucker,
\emph{Lattice sums}, in: Theoretical Chemistry: Advances and Perspectives
\textbf{5}, Academic Press, New York, 1980, 67--139.

\bibitem{LMFDB}
The LMFDB Collaboration,
\emph{The $L$-functions and Modular Forms Database},
newform \textsf{7.3.b.a}, \url{https://www.lmfdb.org}
(accessed July 2026).

\bibitem{mpmath}
F.~Johansson et al.,
\emph{mpmath: a Python library for arbitrary-precision floating-point
arithmetic} (version 1.3.0), \url{https://mpmath.org}
(accessed July 2026).

\bibitem{ivdoc}
The \texttt{mpmath} documentation, version 1.3.0,
\emph{Interval arithmetic (\texttt{mpmath.iv})},
\url{https://mpmath.org/doc/1.3.0/contexts.html}
(outward rounding of interval endpoints; accessed July 2026).

\bibitem{DS}
F.~Diamond and J.~Shurman,
\emph{A First Course in Modular Forms},
Grad.\ Texts in Math.\ \textbf{228}, Springer, New York, 2005.

\bibitem{MoKC}
R.~E. Moore, R.~B. Kearfott, and M.~J. Cloud,
\emph{Introduction to Interval Analysis},
SIAM, Philadelphia, 2009.

\end{thebibliography}
\end{document}